\def\version{29/03/2017 version 8
\hfill\href{http://arxiv.org/abs/1208.1868}{arXiv:1208.1868}
}
\def\PO{\text{\pigpenfont R}}
\renewcommand{\thefootnote}{\fnsymbol{footnote}}
\long\def\symbolfootnote[#1]#2{\begingroup%
\def\thefootnote{\fnsymbol{footnote}}\footnote[#1]{#2}\endgroup}
\newtheorem{thm}{Theorem}[section]
\newtheorem{lem}[thm]{Lemma}
\newtheorem{prop}[thm]{Proposition}
\newtheorem{cor}[thm]{Corollary}
\theoremstyle{definition}
\newtheorem{rem}[thm]{Remark}
\newtheorem{defn}[thm]{Definition}
\newtheorem{examp}[thm]{Example}
\numberwithin{equation}{section}
\numberwithin{figure}{section}
\def\ie{\emph{i.e.}}
\def\:{\colon}
\def\.{\cdot}
\def\o{\circ}
\def\<{\left\langle}
\def\>{\right\rangle}
\def\({\left(}
\def\){\right)}
\def\ph#1{\phantom{#1}}
\def\epsilon{\varepsilon}
\def\phi{\varphi}
\def\leq{\leqslant}
\def\geq{\geqslant}
\def\lla{\longleftarrow}
\def\Lra{\Longrightarrow}
\def\ra{\rightarrow}
\def\bar#1{\overline{#1}}
\def\tilde#1{\widetilde{#1}}
\def\iso{\cong}
\DeclareMathOperator{\id}{id}
\DeclareMathOperator{\Id}{Id}
\DeclareMathOperator{\im}{im}
\DeclareMathOperator{\pinch}{pi}
\def\CP{\mathbb{C}\mathrm{P}}
\def\CPi{\CP^\infty}
\def\F{\mathbb{F}}
\def\H{\mathbb{H}}
\def\k{\Bbbk}
\def\Q{\mathbb{Q}}
\def\RP{\mathbb{R}\mathrm{P}}
\def\RPi{\RP^\infty}
\def\Z{\mathbb{Z}}
\DeclareMathOperator{\Hom}{Hom}
\DeclareMathOperator{\ord}{ord}
\DeclareMathOperator{\TAQ}{TAQ}
\DeclareMathOperator{\Tor}{Tor}
\DeclareMathOperator{\Tr}{Tr}
\def\del{\partial}
\DeclareMathOperator{\dlQ}{\mathrm{Q}}
\def\Sq{\mathrm{Sq}}
\def\SQ{\mathrm{SQ}}
\DeclareMathOperator{\exc}{excess}
\DeclareMathOperator{\len}{len}
\def\undervee#1{\underset{#1}\vee}
\def\Kriz{{K\v{r}\'{i}\v{z}}}
\def\Einfty{$\mathcal{E}_\infty$ }
\title[Calculating with topological Andr\'e-Quillen theory, I]
{Calculating with topological Andr\'e-Quillen theory, I: \\
Homotopical properties of universal derivations and free
commutative \boldmath$S$-algebras}
\author{Andrew Baker}
\date{\version}
\address{
School of Mathematics \& Statistics, University of Glasgow,
Glasgow G12 8QW, Scotland.}
\email{a.baker@maths.gla.ac.uk}
\urladdr{http://www.maths.gla.ac.uk/$\sim$ajb}
\thanks{Part of this work was carried out in the period 2007--8
when the author was supported by a YFF Norwegian Research Council
grant while at the University of Oslo, and later while receiving
an EPSRC research grant. The author would like to thank Maria
Basterra, Marcel B\"okstedt, Bob Bruner, Helen Gilmour, Nick
Kuhn, Tyler Lawson, Mike Mandell, Peter May, Birgit Richter,
Constanze Roitzheim, Markus Szymik and John Rognes for helpful
comments and encouragement over many years, and especially
Philipp Reinhard for detailed comments and also supplying the
proof in Appendix~\ref{sec:Missingpf}. \\
This research was supported by funding from RCUK}
\keywords{$S$-module, $S$-algebra, cell algebra, topological
Andr\'e-Quillen (co)homology, power operations}
\subjclass[2010]{Primary 55P43; Secondary 13D03, 55N35, 55P48}
\begin{document}

\begin{abstract}
We adopt the viewpoint that topological And\'e-Quillen
theory for commutative $S$-algebras should provide usable
(co)homology theories for doing calculations in the sense
traditional within Algebraic Topology. Our main emphasis
is on homotopical properties of universal derivations,
especially their behaviour in multiplicative homology
theories. There are algebraic derivation properties, but
also deeper properties arising from the homotopical
structure of the free algebra functor $\mathbb{P}_R$ and
its relationship with extended powers of spectra. In the
connective case in ordinary $\bmod{\,p}$ homology, this
leads to useful formulae involving Dyer-Lashof operations
in the homology of commutative $S$-algebras. Although
many of our results could no doubt be obtained using
stabilisation, our approach seems more direct. We also
discuss a reduced free algebra functor $\tilde{\mathbb{P}}_R$.
\end{abstract}

\maketitle

%\tableofcontents

\section*{Introduction}

Topological Andr\'e-Quillen homology and cohomology theories
for commutative $S$-algebras were introduced by Maria Basterra,
building on ideas of Igor \Kriz{} as well as algebraic Andr\'e-Quillen
theory. Subsequent work, both individually and jointly in various
combinations, by Basterra, Gilmour, Goerss, Hopkins, Kuhn, Lazarev,
Mandell, McCarthy, Minasian, Reinhard, Richter, Robinson, Whitehouse
as well as the present author, has laid out the basic structure
and provided key relationships with other areas.

In this work we continue to adopt the viewpoint of~\cite{BGRtaq},
regarding $\TAQ$ as providing usable (co)homology theories for
doing calculations in the sense traditional within Algebraic
Topology.

Our main emphasis is on homotopical properties of universal
derivations, especially their behaviour in multiplicative
homology theories. As the name suggests, there are algebraic
derivation properties, but also deeper properties arising out
of the homotopical structure of the free algebra functor and
its relationship with extended powers of spectra. In the
connective case and in ordinary mod~$p$ homology, this leads
to useful formulae involving Dyer-Lashof operations in the
homology of commutative $S$-algebras. It seems likely that
many of these results are obtainable using stabilisation, but
our approach seems more direct. We remark that work of Mike
Mandell~\cite{MM:TAQ} suggests that it might be more natural
to replace commutative $S$-algebras by algebras in $\mathcal{M}_S$
over his operad $\mathcal{G}$ and work with those. Related
results on the homology of the free commutative $S$-algebra
functor also appear in work of Nick Kuhn \& Jason
McCarty~\cites{NJK:Transfers,NJK&JBMcC:HomLoopSpces}.

We also discuss a \emph{reduced free algebra} functor
$\tilde{\mathbb{P}}_R$ which we learnt of from Tyler Lawson.
This takes as input $R$-modules under a fixed cofibrant
replacement for the $R$-module~$R$ and gives rise to a Quillen
adjunction.
\[
\xymatrix{
{\mathscr{C}_R} \ar@/_8pt/[rr]_{\tilde{\mathbb{U}}}
 && {S^0_R/\mathscr{M}_R} \ar@/_8pt/[ll]_{\tilde{\mathbb{P}}_R}
}
\]
We will use this in a sequel to study spectral sequences related
to those studied by Maria Basterra~\cite{MBtaq}*{section~5} and
Haynes Miller~\cite{HRM:SS}.

We give some sample calculations, but our main concern is with
laying the groundwork for future applications.

In two brief appendices we supply a proof of a basic result,
an adjunction result, and some formulae for calculating
Dyer-Lashof operations.

\subsection*{Notation, etc}

When working over a fixed commutative ground ring $\k$ such
as $\F_p$, we often write $\otimes$ for $\otimes_{\k}$,
$\Hom$ for $\Hom_{\k}$, etc.

\section{Recollections on topological Andr\'e-Quillen theory}
\label{sec:Recollections}

We will assume the reader is familiar with Basterra's foundational
paper~\cite{MBtaq} and the further development of its ideas
in~\cite{BGRtaq}. All of this is founded on the notions of $S$-modules
and commutative $S$-algebras of~\cite{EKMM}. We briefly spell
out some of the main ingredients.

If $R$ is a commutative $S$-algebra, then its category of (left)
$R$-modules $\mathscr{M}_R$ is a model category and the category
of commutative $R$-algebras $\mathscr{C}_R$ consists of the
commutative monoids in $\mathscr{M}_R$ with monoidal morphisms.
There is a free $R$-algebra functor
$\mathbb{P}_R\:\mathscr{M}_R\to\mathscr{C}_R$ left adjoint to
the forgetful functor $\mathbb{U}\:\mathscr{C}_R\to\mathscr{M}_R$,
and this pair gives a Quillen adjunction. We denote the derived
(or homotopy) categories of these model categories by
$\bar{h}\mathscr{M}_R$ and $\bar{h}\mathscr{C}_R$.

For every pair of commutative $R$-algebras $A\to B$, their
\emph{cotangent complex} is a $B$-module $\Omega_A(B)$ which
is well defined up to isomorphism in $\bar{h}\mathscr{M}_B$.
This comes with a canonical morphism in $\bar{h}\mathscr{M}_A$,
the \emph{universal derivation}
\[
\delta_{(B,A)}\:B\to\Omega_A(B),
\]
characterised by a natural isomorphism
\[
\bar{h}\mathscr{C}_A/B(B, B\vee X) \iso \bar{h}\mathscr{M}_B(\Omega_A(B),X),
\]
where $X\in\mathscr{M}_B$ and $B\vee X$ denotes the \emph{square
zero extension} of~$B$ by~$X$ viewed as an $A$-algebra over~$B$.

\emph{Topological Andr\'e-Quillen homology} and \emph{cohomology}
with coefficients in a $B$-module $M$ are defined by
\begin{align*}
\TAQ_*(B,A;M) &= \pi_*(M\wedge_B \Omega_A(B)), \\
\TAQ^*(B,A;M) &= \pi_{-*}(F_B(\Omega_A(B),M))
               = \bar{h}\mathscr{M}_B(\Omega_A(B),M)^*,
\end{align*}
where
\[
\bar{h}\mathscr{M}_B(\Omega_A(B),X)^n
    = \bar{h}\mathscr{M}_B(\Omega_A(B),\Sigma^n X).
\]
When $E$ is a (unital) $B$ ring spectrum, the composition
\begin{equation}\label{eq:TAQ-Hurewiczhom}
\xymatrix{
\pi_*(B) \ar[r]\ar@/^19pt/[rrrr]^{\theta}
  & E_*(B) \ar[rr]_{(\delta_{(B,A)})_*}
  && E_*(\Omega_A(B))\ar[r]_(.35){\theta'}
                & E^B_*(\Omega_A(B)) = \TAQ_*(B,A;E)
}
\end{equation}
is the \emph{$\TAQ$-Hurewicz homomorphism}.

In \cite{BGRtaq} we showed how this could be interpreted
as a cellular theory for cellular commutative $R$-algebras.
A key ingredient was the basic observation that for a
cofibrant $R$-module $X$, cotangent complex of the free
$R$-algebra $\mathbb{P}_RX$ is
\begin{equation}\label{eq:OmegaPX}
\Omega_R(\mathbb{P}_RX) \iso \mathbb{P}_RX\wedge_R X,
\end{equation}
see~\cite{BGRtaq}*{proposition~1.6} for example. For
completeness we give a proof of this in
Appendix~\ref{sec:Missingpf}, and discuss the universal
derivation for $\mathbb{P}_RX$ in Section~\ref{sec:PX}.

In \cite{BGRtaq} we developed the theory of connective
$p$-local commutative $S$-algebras along the lines
of~\cite{AJB-JPM} for spectra, making crucial use of
$\TAQ$ with coefficients in $H\F_p$. In both of those
works, one important outcome was the ability to detect
\emph{minimal atomic} objects using the vanishing of
the appropriate Hurewicz homomorphism in positive
degrees.

\section{Homotopical properties of universal derivations}
\label{sec:UnivDer}

Let $A$ be a commutative $S$-algebra. As pointed out
in~\cite{AL:Glasgow}, for a commutative $A$-algebra~$B$,
the universal derivation $\delta_{(B,A)}\:B\to\Omega_A(B)$
is a homotopy derivation in the sense of the following
discussion.

Suppose that $R$ is a commutative $S$-algebra, let~$E$
be an~$R$ ring spectrum and let $M$ be a left $E$-module.
We remind the reader this means that there are morphisms
$\eta\:R\to E$, $\phi\:E\wedge_R E\to E$ and
$\mu\:E\wedge_R M\to M$ in $\bar{h}\mathscr{M}_R$ which
satisfy appropriate associativity and unital conditions.
\begin{defn}\label{defn:HtpyDeriv}
A morphism $\del\:E\to M$ in $\bar{h}\mathscr{M}_R$ is
a \emph{homotopy derivation} if the following diagram
in $\bar{h}\mathscr{M}_R$ commutes.
\begin{equation}\label{eq:hder}
\xymatrix{
&E\wedge_R E\ar[rr]^{\phi}\ar[dl]_{I\wedge\del\vee\del\wedge I}&&E\ar[dr]^{\del}& \\
E\wedge_R M\vee M\wedge_R E\ar[dr]_{I\vee\mathrm{switch}} &&&& M \\
&E\wedge_R M\vee E\wedge_R M\ar[rr]^(.6){\mathrm{fold}} &&E\wedge_R M\ar[ur]_{\mu}&
}
\end{equation}
\end{defn}

\smallskip
Now let $A$ be a commutative $S$-algebra and let $B$ be
a commutative $A$-algebra. Following the remarks at end
of~\cite{AL:Glasgow}*{section~2}, we recall that the
universal derivation $\delta_{(B,A)}\:B\to\Omega_A(B)$
is a morphism in  the derived category of $A$-modules
$\bar{h}\mathscr{M}_{A}$ which is also a homotopy
derivation in the sense that the following diagram
commutes in $\bar{h}\mathscr{M}_{A}$
\begin{equation}\label{eq:Deriv}
\xymatrix{
B\wedge_AB\ar[rr]^(.57){\mathrm{prod}}
\ar[dd]_{I\wedge\delta_{(B,A)}+\mathrm{switch}\circ(\delta_{(B,A)}\wedge I)}
     && B\ar[dd]^{\delta_{(B,A)}} \\
     && \\
B\wedge_A\Omega_A(B)\ar[rr]^(.57){\mathrm{mult}}  && \Omega_A(B)
}
\end{equation}
where elements of $\bar{h}\mathscr{M}_{A}(X,Y)$ are added
in the usual way.

Now suppose that $E$ is a commutative $B$ ring spectrum;
this implies that $E$ is a $B$-module and there is a unit
morphism of $B$ ring spectra $B\to E$ in $\bar{h}\mathscr{M}_B$.
Then on smashing with copies of $E$,~\eqref{eq:Deriv}
gives another commutative diagram
\begin{equation}\label{eq:Deriv-E}
\xymatrix{
E\wedge_AB\wedge_AE\wedge_AB\ar[rr]^{\mathrm{switch}}
  \ar[dd]^{\substack{ I\wedge I\wedge I\wedge\delta_{(B,A)} \ph{abcabcabcabcabc}\\
  \ph{123} + \mathrm{switch}\circ(I\wedge \delta_{(B,A)}\wedge I\wedge I)} }
 && E\wedge_A E\wedge_A  B\wedge_AB\ar[rr]^{\mathrm{prod}}
 \ar[dd]^{\substack{I\wedge I\wedge I\wedge\delta_{(B,A)} \ph{abcabcabcabcabc} \\
    \ph{123} + I\wedge I\wedge\mathrm{switch}\circ(\delta_{(B,A)}\wedge I)}}
 && E\wedge_AB\ar[dd]^{I\wedge\delta_{(B,A)}}  \\
 &&  &&  \\
E\wedge_AB\wedge_AE\wedge_A\Omega_A(B)\ar[rr]^{\mathrm{switch}}
 && E\wedge_A E\wedge_AB\wedge_A\Omega_A(B)
                \ar[rr]^(.57){\mathrm{prod}\wedge\mathrm{mult}}
 && E\wedge_A\Omega_A(B)
}
\end{equation}
which shows that the commutative $E_*$-algebra
$E^A_*B=\pi_*(E\wedge_A B)$ admits the $E_*$-module
homomorphism
\[
(\delta_{(B,A)})_*\:E^A_*B\to E^A_*\Omega_A(B).
\]
Of course $E^A_*\Omega_A(B)$ is also a left $E^A_*B$-module
since $\Omega_A(B)$ is a left $B$-module.
Composing $(\delta_{(B,A)})_*$ with the natural homomorphism
$E^A_*\Omega_A(B)\to E^B_*\Omega_A(B)$, we obtain an $E_*$-module
homomorphism
\[
\Delta_{(B,A)}\:E^A_*B \to E^A_*\Omega_A(B)\to E^B_*\Omega_A(B).
\]
We also have an augmentation $\epsilon\:E^A_*B\to E_*$ induced
by applying $\pi_*(-)$ to the evident composition
\[
E\wedge_AB \to E\wedge_AE \to E.
\]
Clearly $\epsilon$ is a morphism of $E_*$-algebras.
\begin{lem}\label{lem:Deriv-E}
$(\delta_{(B,A)})_*$ and $\Delta_{(B,A)}$ are $E_*$-derivations,
so for $u,v\in E^A_*B$,
\begin{align*}
(\delta_{(B,A)})_*(uv) &=
u(\delta_{(B,A)})_*(v)\pm v(\delta_{(B,A)})_*(u), \\
\Delta_{(B,A)}(uv) &=
\epsilon(u)\Delta_{(B,A)}(v)\pm\epsilon(v)\Delta_{(B,A)}(u),
\end{align*}
where the signs are determined from the degrees of $u,v$
with the usual sign convention. In particular, if
$u,v\in\ker\epsilon\:E^A_*B\to E_*$, then
\[
\Delta_{(B,A)}(uv) = 0,
\]
so $\Delta_{(B,A)}$ annihilates non-trivial products.
\end{lem}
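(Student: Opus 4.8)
The plan is to extract the two derivation identities directly from the commutative diagrams in the excerpt by passing to homotopy groups. First I would establish the statement for $(\delta_{(B,A)})_*$ on $E^A_*B$. Diagram~\eqref{eq:Deriv-E} is precisely the smash of the homotopy-derivation diagram~\eqref{eq:Deriv} with enough copies of $E$ to turn the ``product on $B$'' into the product on the $E_*$-algebra $E^A_*B=\pi_*(E\wedge_A B)$, and the ``module multiplication $B\wedge_A\Omega_A(B)\to\Omega_A(B)$'' into the $E^A_*B$-module structure on $E^A_*\Omega_A(B)$. Applying $\pi_*(-)$ to that square and chasing an element $u\otimes v$ around the two ways from the top-left corner yields
\[
(\delta_{(B,A)})_*(uv) = u\,(\delta_{(B,A)})_*(v) \pm v\,(\delta_{(B,A)})_*(u),
\]
with the Koszul sign coming from the switch map on $E\wedge_A B\wedge_A E\wedge_A B$ applied to classes in degrees $|u|,|v|$. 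This is the first displayed formula (writing $\delta_{(B,A)}$ for $(\delta_{(B,A)})_*$ as in the statement).

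Next I would deduce the identity for $\Delta_{(B,A)}$. By definition $\Delta_{(B,A)}$ is the composite of $(\delta_{(B,A)})_*$ with the natural map $E^A_*\Omega_A(B)\to E^B_*\Omega_A(B)$. The point is that while $E^A_*\Omega_A(B)$ is a module over $E^A_*B$, the map to $E^B_*\Omega_A(B)$ is linear over the augmentation $\epsilon\colon E^A_*B\to E_*$: smashing $\Omega_A(B)$ up from a $B$-module to base-change along $B\to E$ collapses the $E^A_*B$-action through $\epsilon$, because the unit $B\to E$ factors the relevant multiplication. Concretely, the square
\[
\xymatrix{
E^A_*B\otimes E^A_*\Omega_A(B) \ar[r] \ar[d]_{\epsilon\otimes\,\mathrm{can}}
  & E^A_*\Omega_A(B) \ar[d]^{\mathrm{can}} \\
E_*\otimes E^B_*\Omega_A(B) \ar[r] & E^B_*\Omega_A(B)
}
\]
commutes. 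Pushing the first formula forward along $\mathrm{can}$ and using this commutativity replaces each external factor $u,v$ by $\epsilon(u),\epsilon(v)$, giving
\[
\Delta_{(B,A)}(uv) = \epsilon(u)\,\Delta_{(B,A)}(v) \pm \epsilon(v)\,\Delta_{(B,A)}(u).
\]
The final assertion is then immediate: if $u,v\in\ker\epsilon$ then both terms on the right vanish, so $\Delta_{(B,A)}(uv)=0$ and $\Delta_{(B,A)}$ kills all products of augmentation-ideal elements.

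The only genuinely substantive point — and the step I expect to be the main obstacle to write cleanly — is justifying that the base-change map $E^A_*\Omega_A(B)\to E^B_*\Omega_A(B)$ is $\epsilon$-linear, i.e.\ that the square above really commutes in $\bar{h}\mathscr{M}_R$. This requires unwinding that the $E^A_*B$-module structure on $E^A_*\Omega_A(B)$ comes from $B$ acting on $\Omega_A(B)$ together with the $E$-ring structure, while the target structure uses $E$ acting on $E\wedge_B\Omega_A(B)$; compatibility follows from the unit diagram for the $B$-ring spectrum $E$ (the composite $B\to E$ is a ring map), but one must be careful that all of this is taking place at the level of $\bar{h}\mathscr{M}_R$ and that the sign bookkeeping in~\eqref{eq:Deriv-E} matches the stated Koszul convention. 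Everything else is a routine diagram chase on homotopy groups.
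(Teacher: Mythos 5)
Your proposal is correct and is exactly the diagram chase the paper has in mind: the paper's own proof consists of the single sentence ``This involves diagram chasing using the definitions,'' and your argument --- applying $\pi_*$ to diagram~\eqref{eq:Deriv-E} for the first identity, then observing that the base-change map $E^A_*\Omega_A(B)\to E^B_*\Omega_A(B)$ is linear over the augmentation $\epsilon$ because the $E^A_*B$-action factors through $\pi_*(E\wedge_AE)\to E_*$ --- supplies precisely the details the paper omits.
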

\begin{proof}
This involves diagram chasing using the definitions.
\end{proof}
We will often write $\delta$ and $\Delta$ for $\delta_{(B,A)}$
and $\Delta_{(B,A)}$ when $(B,A)$ is clear from the context.

\section{The free commutative algebra functor}\label{sec:PX}

For a $R$-module $X$ there is a free commutative $R$-algebra
\[
\mathbb{P}_RX = \bigvee_{j\geq 0} X^{(j)}/\Sigma_j.
\]
When $R=S$ or a localisation of $S$, we will set $\mathbb{P}=\mathbb{P}_S$.

If $X$ is cofibrant as an $R$-module then $\mathbb{P}_RX$ is
cofibrant as an commutative $R$-algebra. The functor $\mathbb{P}_R$
is left adjoint to the forgetful functor
$\mathbb{U}\:\mathscr{C}_R\to\mathscr{M}_R$, so for $A\in\mathscr{C}_R$,
\[
\mathscr{C}_R(\mathbb{P}_R(-),A) \iso \mathscr{M}_R(-,A),
\]
where $A=\mathbb{U}A$ is regarded as an $R$-module. In
fact,
\begin{equation}\label{eq:QuillenAdj-C-M}
\xymatrix{
{\mathscr{C}_R} \ar@/_8pt/[rr]_{\mathbb{U}}
      && {\mathscr{M}_R} \ar@/_8pt/[ll]_{\mathbb{P}_R}
}
\end{equation}
is a Quillen adjunction~\cite{EKMM}.

As it is a left adjoint, $\mathbb{P}_R$ preserves colimits,
including pushouts. As cell and CW $R$-modules are defined
as iterated pushouts, applying $\mathbb{P}_R$ to the skeleta
leads to cell or CW skeleta. To make this explicit, suppose
that $X$ is an $R$-module with CW skeleta $X^{[n]}$, and
attaching maps
\[
j_n\:\bigvee_i S^n_R \to X^{[n]},
\]
where $S^n_R=\mathbb{F}_RS^n$ is the cofibrant model for the
sphere spectrum in $\mathscr{M}_R$. Setting $D^n_R=\mathbb{F}_RD^n$,
the $(n+1)$-skeleton $X^{[n+1]}$ is defined by the pushout
diagram
\[
\xymatrix{
\bigvee_i S^n_R\ar@{}[dr]|{\PO}
     \ar[r]\ar[d] & X^{[n]}\ar[d] \\
\bigvee_i D^{n+1}_R\ar[r] & X^{[n+1]}
}
\]
which induces the pushout diagram
\[
\xymatrix{
\mathbb{P}_R(\bigvee_i S^n_R)
 \ar@{}[dr]|{\PO}
 \ar[r]\ar[d]
          & \mathbb{P}_RX^{[n]}\ar[d] \\
\mathbb{P}_R(\bigvee_i D^{n+1}_R)\ar[r] & \mathbb{P}_RX^{[n+1]}
}
\]
in $\mathscr{C}_R$. So we obtain a CW filtration
on $\mathbb{P}_RX$ with $n$-skeleton
\[
\mathbb{P}_R^{\langle n\rangle}X =
(\mathbb{P}_RX)^{\langle n\rangle} = \mathbb{P}_R(X^{[n]}).
\]

Now we discuss the cotangent complex and universal derivation
for free algebras. Recalling~\eqref{eq:OmegaPX}, we know that
in the homotopy category $\bar{h}\mathscr{M}_{\mathbb{P}_RX}$,
\[
\Omega_R(\mathbb{P}_RX) \iso \mathbb{P}_RX \wedge_R X,
\]
and the universal derivation
\[
\delta_{(\mathbb{P}_RX,R)}
\in\bar{h}\mathscr{M}_R(\mathbb{P}_RX,\Omega_R(\mathbb{P}_RX))
=\bar{h}\mathscr{M}_R(\mathbb{P}_RX,\mathbb{P}_RX\wedge_R X)
\]
has the homotopy derivation property shown in the homotopy
commutative diagram~\eqref{eq:hder}. Furthermore,
$\delta_{(\mathbb{P}_RX,R)}$ corresponds to the inclusion
$X\to\mathbb{P}_RX\wedge_R X$ under the sequence of
isomorphisms
\begin{align}
\bar{h}\mathscr{C}_R/\mathbb{P}_RX(\mathbb{P}_RX,\mathbb{P}_RX\vee\Omega_R(\mathbb{P}_RX))
&\iso
\bar{h}\mathscr{M}_{\mathbb{P}_RX}(\Omega_R(\mathbb{P}_RX),\Omega_R(\mathbb{P}_RX))
                                           \notag \\
&\iso
\bar{h}\mathscr{M}_{\mathbb{P}_RX}(\mathbb{P}_RX\wedge_R X,\mathbb{P}_RX\wedge_R X)
                                          \notag \\
&\iso
\bar{h}\mathscr{M}_R(X,\mathbb{P}_RX\wedge_R X).
\label{eq:univder-modulemap}
\end{align}
We will describe $\delta_{(\mathbb{P}_RX,R)}$ as a morphism
in the homotopy category $\bar{h}\mathscr{M}_R$ using this
identification. Nick Kuhn has pointed out that~\cite{NJK:Transfers}
gives a closely related analysis of extended powers, and
proves far more about their coproduct structure induced
by the pinch map.

Suppose that $X'$ is a second copy of $X$. A representative
for the homotopy class of the pinch map
$\pinch\:X\to X\vee X'$ induces morphisms of
commutative $R$-algebras
\[
\xymatrix@C=0.5cm@R=1.0cm{
&& \mathbb{P}_R X\ar[dll]_{\mathbb{P}_R\pinch}\ar[drr] && \\
\mathbb{P}_R(X\vee X')\ar[dr]_(.45){\iso}
&& && \mathbb{P}_R X \vee \mathbb{P}_R X\wedge_R X' \\
 &\mathbb{P}_R X\wedge_R\mathbb{P}_R X'\ar[rr]
 && \mathbb{P}_R X\wedge_R (R\vee X')\ar[ur]_{\iso} &
}
\]
where $R\vee X'$ and $\mathbb{P}_R X\vee\mathbb{P}_R X\wedge_R X'$
are square zero extensions of $R$ and $\mathbb{P}_RX$ respectively,
and the horizontal morphism kills the wedge summands $(X')^{(r)}/\Sigma_r$
with $r\geq2$. Restricting to the summand $X$ in $\mathbb{P}_R X$
we obtain the pinch map~$\pinch$, and then on applying the
isomorphism of~\eqref{eq:univder-modulemap} we find that
the resulting composition
\[
\xymatrix{
\mathbb{P}_R X \ar[r]\ar@/^17pt/[rr]^\delta
 & \mathbb{P}_R X \vee \mathbb{P}_R X\wedge_R X' \ar[r]
 & \mathbb{P}_R X\wedge_R X'
}
\]
agrees with the universal derivation $\delta_{(\mathbb{P}_RX,R)}$.

In the homotopy category of $S$-modules, $\delta_{(\mathbb{P}_RX,R)}$
is equivalent to a coproduct of maps
\[
\delta_{(\mathbb{P}_RX,R),n}\:
E\Sigma_n\ltimes_{\Sigma_n}X^{(n)}
 \to (E\Sigma_{n-1}\ltimes_{\Sigma_{n-1}}X^{(n-1)})\wedge X,
\]
where we have of course identified $X'$ with $X$. In fact
these are the transfer maps $\tau_{n-1,1}$
of~\cite{LNM1176}*{definition~II.1.4}, \ie,
\begin{equation}\label{eq:delta-cpt}
\delta_{(\mathbb{P}_RX,R),n}=\tau_{n-1,1}\:
E\Sigma_n\ltimes_{\Sigma_n}X^{(n)}
 \to (E\Sigma_{n-1}\ltimes_{\Sigma_{n-1}}X^{(n-1)})\wedge X.
\end{equation}
The derivation property of $\delta_{(\mathbb{P}X,S)}$ is
just a consequence of the commutativity of the
diagram~\eqref{eq:DoubCoset} below. We will give a brief
explanation of this.

For detailed accounts of the stable homotopy theory involved,
see~\cites{LNM1176,LNM1213,JPM:Silverbook}. We remark that
in~\cite{LNM1176}*{chapter~II, p.~24}, the pinch map is
referred to as the `diagonal' since in the stable category
finite products and coproducts coincide. To ease notation
and exposition, we take $R=S$ and set $\mathbb{P}=\mathbb{P}_S$;
however the general case is similar.

Let $E\Sigma_{m+n}$ be a free contractible $\Sigma_{m+n}$-space,
and let~$Y$ be a $\Sigma_{m+n}$-spectrum for $m,n\geq1$; we
are interested in the case where $Y=X^{(m+n)}$, the $(m+n)$-th
smash power of~$X$. The equivariant half smash product
$E\Sigma_{m+n}\ltimes Y$ is a free $\Sigma_{m+n}$-spectrum,
and the evident inclusions of subgroups
\[
\xymatrix{
\Sigma_m\times\Sigma_n\ar@{^{(}->}[r]
         & \Sigma_{m+n}
         & \ar@{_{(}->}[l]\Sigma_{m+n-1}
}
\]
induce morphisms of spectra
\[
E\Sigma_{m+n}\ltimes_{\Sigma_m\times\Sigma_n} Y
    \to E\Sigma_{m+n}\ltimes_{\Sigma_{m+n}} Y
    \lla E\Sigma_{m+n}\ltimes_{\Sigma_{m+n-1}} Y
\]
on orbit spectra. There are also transfer maps
\[
\xymatrix{
E\Sigma_{m+n}\ltimes_{\Sigma_{m+n}} Y \ar[rr]^(.45){\tau_{m,n}}
 && E\Sigma_{m+n}\ltimes_{\Sigma_{m}\times\Sigma_{n} } Y \\
E\Sigma_{m+n}\ltimes_{\Sigma_{m+n}} Y \ar[rr]^(.45){\tau_{m+n-1,1}}
 && E\Sigma_{m+n}\ltimes_{\Sigma_{m+n-1} } Y
}
\]
associated with these inclusions of subgroups. We will
use the double coset formula of \cite{LNM1213}*{\S IV.6}.
We are in the situation of~\cite{LNM1213}*{theorem~IV.6.3},
and our first task is to identify representatives for
the double cosets in
\[
\Sigma_m\times\Sigma_n\backslash\Sigma_{m+n}/\Sigma_{m+n-1}.
\]
An elementary exercise with cycle notation shows that the
following are true:
\begin{itemize}
\item
the elements of $\Sigma_{m+n}/\Sigma_{m+n-1}$ are the distinct
left cosets $(r,m+n)\Sigma_{m+n-1}$ where $1\leq r\leq m+n-1$,
together with $\Sigma_{m+n-1}$;
\item
by definition, the elements of
$\Sigma_m\times\Sigma_n\backslash\Sigma_{m+n}/\Sigma_{m+n-1}$
are the $\Sigma_m\times\Sigma_n$-orbits in $\Sigma_{m+n}/\Sigma_{m+n-1}$
and these are represented by $(m,m+n)$ and $\id$. In fact the
orbit of $(m,m+n)$ contains all the $(r,m+n)$ with $1\leq r\leq m$,
and the orbit of the identity $I$ contains all of the transpositions
$(m+r,m+n)$ with $1\leq r\leq n$.
\end{itemize}

It is straightforward to verify the two identities
\begin{align*}
\Sigma_m\times\Sigma_{n-1} &=
 \Sigma_m\times\Sigma_n \cap \Sigma_{m+n-1}, \\
\Sigma_{m-1}\times\Sigma_n &=
 \Sigma_m\times\Sigma_n \cap (m,m+n)\Sigma_{m+n-1}(m,m+n).
\end{align*}
Now the double coset formula tells us that in the homotopy
category, there is a commutative diagram having the following
form.
\begin{equation}\label{eq:DoubCoset}
\xymatrix@C=0.3cm@R=1.0cm{
& E\Sigma_{m+n}\ltimes_{\Sigma_m\times\Sigma_n} Y\ar[drr]\ar[dl]_{\tau_{n-1,1}\vee\tau_{m-1,1}\;}
     && \\
E\Sigma_{m+n}\ltimes_{\Sigma_{m}\times\Sigma_{n-1}} Y \vee E\Sigma_{m+n}\ltimes_{\Sigma_{m-1}\times\Sigma_n} Y
\ar[d] & && E\Sigma_{m+n}\ltimes_{\Sigma_{m+n}} Y \ar[d]^{\tau_{m+n-1,1}} \\
E\Sigma_{m+n}\ltimes_{\Sigma_{m+n-1}} Y\vee E\Sigma_{m+n}\ltimes_{\Sigma_{m+n-1}} Y
\ar[rrr]^(.54){\mathrm{fold}} &&
& E\Sigma_{m+n}\ltimes_{\Sigma_{m+n-1}} Y
}
\end{equation}

\section{Power operations and the free functor}\label{sec:PowOps+PX}

We will describe another result on the effect of certain
transfer maps in homology that sheds light on the calculation
of universal derivations. We begin by recalling some standard
facts about the homology of extended powers.

Let $p$ be a prime and let $V=V_*$ be a graded $\F_p$-vector
space. The inclusion $C_p\leq\Sigma_p$ of the subgroup of
cyclic permutations $C_p=\langle\gamma\rangle$ with
$\gamma=(1,2,\ldots,p)$ has index $(p-1)!$, so the associated
transfer homomorphism provides a splitting for the induced
homomorphism in group homology with coefficients in the
$p$-fold tensor power $V^{\otimes p}$ with the obvious action.
\[
\xymatrix{
\mathrm{H}_*(C_p;V^{\otimes p})\ar@{->>}[r]
 & \ar@/_19pt/@{ >->}[l]_{\Tr_{C_p}^{\Sigma_p}}
                   \mathrm{H}_*(\Sigma_p;V^{\otimes p})
}
\]
Furthermore, the homology of the subgroup $\Sigma_{p-1}\leq\Sigma_p$
is trivial in positive degrees, \ie,
\[
\mathrm{H}_*(\Sigma_{p-1};V^{\otimes p})
     = \mathrm{H}_0(\Sigma_{p-1};V^{\otimes p})
     = (V^{\otimes p})_{\Sigma_{p-1}}.
\]
Hence the associated transfer homomorphism is also
zero in positive degrees, \ie, for $k>0$,
\begin{equation}\label{eq:Tr=0}
0=\Tr_{\Sigma_{p-1}}^{\Sigma_p}\:\mathrm{H}_k(\Sigma_p;V^{\otimes p})
     \to\mathrm{H}_k(\Sigma_{p-1};V^{\otimes p}).
\end{equation}
In fact the diagram of subgroup inclusions
\[
\xymatrix{
1\ar@{^(->}[r]\ar@{^(->}[d] & C_p\ar@{^(->}[d] \\
\Sigma_{p-1}\ar@{^(->}[r] & \Sigma_p
}
\]
induces a commutative diagram of split epimorphisms.
\[
\xymatrix{
V^{\otimes p}\ar[r]\ar@{->>}[d]
  & \mathrm{H}_*(C_p;V^{\otimes p})\ar@{->>}[d] \\
{\ph{\Sigma_{p-1}}}(V^{\otimes p})_{\Sigma_{p-1}}\ar[r]\ar@/^17pt/@{ >->}[u]^{\Tr_{1}^{\Sigma_{p-1}}}
  & \mathrm{H}_*(\Sigma_p;V^{\otimes p})\ar@/_17pt/@{ >->}[u]_{\Tr_{C_p}^{\Sigma_p}}
}
\]

This can be generalised to $\Sigma_{p^m}$ where $m\geq2$.
Then the $p$-order of $|\Sigma_{p^m}|$ is
\[
\ord_p|\Sigma_{p^m}| = \frac{(p^m-1)}{(p-1)}
                     = p^{m-1}+p^{m-2}+\cdots+p+1.
\]
Writing
\[
\Sigma_{p^{m-1}}^k =
\overset{k}{\overbrace{\Sigma_{p^{m-1}}\times\cdots\times\Sigma_{p^{m-1}}}},
\]
the wreath product
\[
\Sigma_p\wr\Sigma_{p^{m-1}} =
\Sigma_p\ltimes\Sigma_{p^{m-1}}^p \leq \Sigma_{p^m}
\]
has $p$-order
\[
\ord_p|\Sigma_p\wr\Sigma_{p^{m-1}}|
             = 1 + p\frac{(p^{m-1}-1)}{(p-1)}
             = \frac{(p^m-1)}{(p-1)},
\]
so an argument using transfer shows that the inclusion
induces a split epimorphism.
\[
\xymatrix{
H_*(\Sigma_p\wr\Sigma_{p^{m-1}};\F_p)\ar@{->>}[rr] &&
\ar@/_19pt/@{ >->}[ll]_{\Tr_{\Sigma_p\wr\Sigma_{p^{m-1}}}^{\Sigma_{p^m}}}
H_*(\Sigma_{p^m};\F_p)
}
\]
Another calculation shows that
\[
\ord_p|\Sigma_{p^m-1}| = \frac{(p^m-1)}{(p-1)} - m
                       = (p^{m-1}+p^{m-2}+\cdots+p+1) - m
\]
and
\begin{align*}
\ord_p|\Sigma_{p^{m-1}}^{(p-1)}\times\Sigma_{p^{m-1}-1}|
&= (p-1)\frac{(p^{m-1}-1)}{(p-1)} + \frac{(p^{m-1}-1)}{(p-1)} - (m-1) \\
&= (p^{m-1}+p^{m-2}+\cdots+p+1) - m \\
&= \ord_p|\Sigma_{p^m-1}|.
\end{align*}
Therefore
\[
\Sigma_{p^{m-1}}^{(p-1)}\times\Sigma_{p^{m-1}-1}\leq\Sigma_{p^m-1}
\]
and these subgroups of $\Sigma_{p^m}$ have the same $p$-order,
hence the inclusion induces an isomorphism
\[
H_*(\Sigma_{p^{m-1}}^{(p-1)}\times\Sigma_{p^{m-1}-1};\F_p)
            \xrightarrow{\;\iso\;} H_*(\Sigma_{p^m-1};\F_p).
\]
Consider the commuting diagram of subgroup inclusions
\[
\xymatrix@C=0.05cm@R=1.0cm{
 & \Sigma_{p^m} & \\
\Sigma_{p}\wr\Sigma_{p^{m-1}}\ar@{^{(}->}[ur]^{1}  && \\
  && \Sigma_{p^m-1}\ar@{_{(}->}[uul]_{p^m} \\
\Sigma_{p-1}\wr\Sigma_{p^{m-1}}\times\Sigma_{p^{m-1}}\ar@{^{(}->}[uu]^{p}
  && \\
&\Sigma_{p^{m-1}}^{(p-1)}\times\Sigma_{p^{m-1}-1}
 \ar@{_{(}->}[ul]_{p^{m-1}}\ar@{^{(}->}[uur]^{1}\ar@{^{(}->}[uuuu]^{p^{m}}
      & \\
}
\]
in which the arrows are decorated with the $p$-power factors
of the indices, \ie, if $H\leq G$ then the number would be
$p^{\ord_p|G:H|}$. Applying homology with coefficients in
$V^{\otimes p^m}$ with the evident action of $\Sigma_{p^m}$,
$\mathrm{H}_*(-;V^{\otimes p^m})$, we obtain a commutative
diagram of induced homomorphisms (solid arrows) and transfer
homomorphisms (dashed arrows).
%%%\begin{turn}{90}
\[
\xymatrix@C=0.05cm@R=1.0cm{
 & \mathrm{H}_*(\Sigma_{p^m};V^{\otimes p^m}) & \\
\mathrm{H}_*(\Sigma_{p}\wr\Sigma_{p^{m-1}};V^{\otimes p^m})
     \ar@<0.5ex>@{->>}[ur]\ar@<-0.5ex>@{<--<}[ur]
 && \\
 && \mathrm{H}_*(\Sigma_{p^m-1};V^{\otimes p^m})
 \ar@<-0.5ex>[uul]\ar@<0.5ex>@{<--}[uul]^{\Tr_{\Sigma_{p^m-1}}^{\Sigma_{p^m}}}\ar@<-0.5ex>@{ >-->}[ddl] \\
\mathrm{H}_*(\Sigma_{p-1}\wr\Sigma_{p^{m-1}}\times\Sigma_{p^{m-1}};V^{\otimes p^m})
\ar@<0.5ex>[uu]
\ar@<-0.5ex>@{<--}[uu]_{\Tr_{\Sigma_{p-1}\wr\Sigma_{p^{m-1}}\times\Sigma_{p^{m-1}}}^{\Sigma_{p}\wr\Sigma_{p^{m-1}}}}
 && \\
&\mathrm{H}_*(\Sigma_{p^{m-1}}^{(p-1)}\times\Sigma_{p^{m-1}-1};V^{\otimes p^m})
\ar@<0.5ex>[ul]\ar@<-0.5ex>@{<--}[ul]\ar@<-0.5ex>@{>>}[uur]
\ar@<-0.5ex>[uuuu] \ar@<0.5ex>@{<--}[uuuu] & \\
}
\]
%%%\end{turn}
As the transfer is contravariantly functorial with respect
to homomorphisms induced from inclusions, it is enough to
show that
$\Tr_{\Sigma_{p-1}\wr\Sigma_{p^{m-1}}\times\Sigma_{p^{m-1}}}^{\Sigma_{p}\wr\Sigma_{p^{m-1}}}$
is zero in positive degrees to deduce that the same holds
for $\Tr_{\Sigma_{p^m-1}}^{\Sigma_{p^m}}$. But this follows
since
\[
\mathrm{H}_*(\Sigma_{p}\wr\Sigma_{p^{m-1}};V^{\otimes p^m})
\iso
\mathrm{H}_*(\Sigma_{p};\mathrm{H}_*(\Sigma_{p^{m-1}};V^{\otimes p^{m-1}})^{\otimes p})
\]
and by~\eqref{eq:Tr=0} we already know the result for
all transfer homomorphisms of the form
\[
\Tr_{\Sigma_{p-1}}^{\Sigma_{p}}\:
\mathrm{H}_*(\Sigma_{p};W^{\otimes p})
 \to \mathrm{H}_*(\Sigma_{p-1};W^{\otimes p})
\]
for some $\F_p$-vector space~$W$. To summarise, we
have verified
\begin{lem}\label{lem:Tr=0}
For $m\geq1$, the transfer
\[
\Tr_{\Sigma_{p^m-1}}^{\Sigma_{p^m}}\:
\mathrm{H}_*(\Sigma_{p^m};V^{\otimes p^m})
\to
\mathrm{H}_*(\Sigma_{p^m-1};V^{\otimes p^m})
\]
is zero in positive degrees.
\end{lem}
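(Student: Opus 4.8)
The plan is to argue by comparing subgroups of $\Sigma_{p^m}$, bootstrapping everything to the case $m=1$, where the assertion is exactly~\eqref{eq:Tr=0}: there $\mathrm{H}_*(\Sigma_{p-1};V^{\otimes p})$ is concentrated in degree~$0$ because $|\Sigma_{p-1}|=(p-1)!$ is prime to~$p$, so the transfer, being a map into that group, is automatically zero in positive degrees. So assume $m\geq2$ and use the two chains of subgroup inclusions appearing above,
\[
\Sigma_{p^{m-1}}^{(p-1)}\times\Sigma_{p^{m-1}-1}
 \hookrightarrow \bigl(\Sigma_{p-1}\wr\Sigma_{p^{m-1}}\bigr)\times\Sigma_{p^{m-1}}
 \hookrightarrow \Sigma_p\wr\Sigma_{p^{m-1}}
 \hookrightarrow \Sigma_{p^m}
\]
and
\[
\Sigma_{p^{m-1}}^{(p-1)}\times\Sigma_{p^{m-1}-1}
 \hookrightarrow \Sigma_{p^m-1}
 \hookrightarrow \Sigma_{p^m},
\]
which, as the index bookkeeping above records, have the same composite into $\Sigma_{p^m}$. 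Applying $\mathrm{H}_*(-;V^{\otimes p^m})$ and using that transfers are contravariantly functorial along inclusions, $\Tr_H^G=\Tr_H^K\circ\Tr_K^G$ for $H\leq K\leq G$, both chains compute the single transfer $\Tr_{\Sigma_{p^{m-1}}^{(p-1)}\times\Sigma_{p^{m-1}-1}}^{\Sigma_{p^m}}$.

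Next I would cash in the $p$-order calculations. Writing $A=\Sigma_{p^{m-1}}^{(p-1)}\times\Sigma_{p^{m-1}-1}$, the inclusion $A\leq\Sigma_{p^m-1}$ has index prime to~$p$, so over~$\F_p$ the composite $\iota_*\circ\Tr_A^{\Sigma_{p^m-1}}$ is multiplication by that index and hence invertible, making $\Tr_A^{\Sigma_{p^m-1}}$ split injective degreewise. Therefore $\Tr_{\Sigma_{p^m-1}}^{\Sigma_{p^m}}$ vanishes in positive degrees as soon as its postcomposition $\Tr_A^{\Sigma_{p^m-1}}\circ\Tr_{\Sigma_{p^m-1}}^{\Sigma_{p^m}}=\Tr_A^{\Sigma_{p^m}}$ does. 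Running $\Tr_A^{\Sigma_{p^m}}$ instead along the first chain, it factors through the transfer $\Tr_{(\Sigma_{p-1}\wr\Sigma_{p^{m-1}})\times\Sigma_{p^{m-1}}}^{\Sigma_p\wr\Sigma_{p^{m-1}}}$; since transfers preserve homological degree, it is enough to show this last map is zero in positive degrees.

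For that final step I would invoke the wreath-product (Nakaoka) identification, valid since $\F_p$ is a field:
\[
\mathrm{H}_*(\Sigma_p\wr\Sigma_{p^{m-1}};V^{\otimes p^m})
 \iso \mathrm{H}_*\bigl(\Sigma_p;W^{\otimes p}\bigr),
\qquad
W=\mathrm{H}_*(\Sigma_{p^{m-1}};V^{\otimes p^{m-1}}),
\]
together with its compatibility with the inclusion $(\Sigma_{p-1}\wr\Sigma_{p^{m-1}})\times\Sigma_{p^{m-1}}\leq\Sigma_p\wr\Sigma_{p^{m-1}}$, under which the transfer in question becomes the transfer $\Tr_{\Sigma_{p-1}}^{\Sigma_p}$ on $\mathrm{H}_*(\Sigma_p;W^{\otimes p})$; this is zero in positive degrees by~\eqref{eq:Tr=0} applied with the graded $\F_p$-vector space $W$ in place of~$V$. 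Assembling the pieces proves the lemma.

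The main obstacle I anticipate is bookkeeping rather than anything conceptual: one must pin down exactly which subgroup inclusions induce $\F_p$-homology isomorphisms or split epimorphisms via the $p$-order comparisons, check that the two chains really do coincide as inclusions into $\Sigma_{p^m}$ so that the diagram of transfers commutes, and --- most delicately --- keep the grading conventions consistent across the wreath-product splitting, since the homological degree of $\Sigma_p\wr\Sigma_{p^{m-1}}$ mixes the outer $\Sigma_p$-degree with the degrees contributed by the blocks, and it is precisely the relevant notion of ``positive degree'' that must be shown to propagate through the chain of transfers down to~\eqref{eq:Tr=0}.
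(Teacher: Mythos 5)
Your proof is correct and takes essentially the same route as the paper's: reduce along the two chains of subgroups meeting in $\Sigma_{p^{m-1}}^{(p-1)}\times\Sigma_{p^{m-1}-1}$, use the prime-to-$p$ indices to make the lower transfers (split) injective, and kill the remaining transfer $\Tr_{\Sigma_{p-1}\wr\Sigma_{p^{m-1}}\times\Sigma_{p^{m-1}}}^{\Sigma_{p}\wr\Sigma_{p^{m-1}}}$ via the wreath-product identification together with \eqref{eq:Tr=0} applied to the graded vector space $W=\mathrm{H}_*(\Sigma_{p^{m-1}};V^{\otimes p^{m-1}})$. Your explicit factorisation $\Tr_{A}^{\Sigma_{p^m-1}}\circ\Tr_{\Sigma_{p^m-1}}^{\Sigma_{p^m}}=\Tr_{A}^{\Sigma_{p^m}}$ with the first map split injective is precisely what the paper's terser appeal to ``contravariant functoriality of the transfer'' amounts to, and the grading caveat you flag at the end is the same one the paper absorbs by stating \eqref{eq:Tr=0} for an arbitrary \emph{graded} $\F_p$-vector space, so that it may be invoked with $W$ in place of $V$.
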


Recall the standard $2$-periodic projective $\F_p[C_p]$-resolution
of~$\F_p$,
\begin{equation}\label{eq:Cp-resolution}
\xymatrix{
\F_p & \ar@{->>}[l] \F_p[C_p]e_0 &&\ar[ll]_{1-\gamma}\F_p[C_p]e_1
&&\ar[ll]_{1+\gamma+\cdots+\gamma^{p-1}}
\F_p[C_p]e_2 && \ar[ll]_(.4){1-\gamma} \cdots
}
\end{equation}
where $C_p=\langle\gamma\rangle$ generated by the $p$-cycle
$\gamma=(1,2,\ldots,p)$. Tensoring over $\F_p[C_p]$ gives
a complex
\[
\xymatrix{
0 & \ar@{->}[l] \F_pe_0\otimes V^{\otimes p}
&&\ar[ll]_{1-\gamma}\F_pe_1\otimes V^{\otimes p}
&&\ar[ll]_{1+\gamma+\cdots+\gamma^{p-1}}\F_pe_2\otimes V^{\otimes p}
&&\ar[ll]_(.4){1-\gamma}\cdots
}
\]
whose homology is $\mathrm{H}_*(C_p;V^{\otimes p})$.
%In each non-negative degree, the transfer is induced
%by the $\F_p$-linear mapping
%\[
%\F_pe_n\otimes V^{\otimes p}\to\F_p[C_p]e_n\otimes V^{\otimes p};
%\]
%given on basic tensors by
%\[
%e_n\otimes(v_1\otimes\cdots\otimes v_p) \mapsto
%(1+\gamma+\cdots+\gamma^{p-1})\otimes(v_1\otimes\cdots\otimes v_p).
%\]
%Here we regard \eqref{eq:Cp-resolution} as defining
%a projective $\F_p[1]=\F_p$-resolution and therefore
%\[
%\xymatrix{
%0 & \ar@{->>}[l] \F_p[C_p]e_0\otimes V^{\otimes p}
%&&\ar[ll]_{1-\gamma}\F_p[C_p]e_1\otimes V^{\otimes p}
%&&\ar[ll]_{1+\gamma+\cdots+\gamma^{p-1}}\F_p[C_p]e_2\otimes V^{\otimes p}
%&\ar[l]_(.4){1-\gamma} \cdots
%}
%\]
%has homology equal to
%\[
%\mathrm{H}_*(1;V^{\otimes p}) = \mathrm{H}_0(1;V^{\otimes p})
%                              = V^{\otimes p}.
%\]

Let $X$ be a connective cofibrant $S$-module, and let
$x\in H_n(X;\F_p)$ be a non-zero element. Recall the
algebraic results of~\cite{JPM:Steenrodops}*{lemma~1.4}.
If~$p$ is a prime, then there are elements
\[
e_r\otimes x^{\otimes p} =
e_r\otimes\overset{p}{\overbrace{x\otimes\cdots\otimes x}}
\]
which survive to non-zero homology classes in
$\mathrm{H}_*(C_p;H_*(X;\F_p)^{\otimes p})$ for $r\geq0$,
and where if~$p$ is odd,
\begin{itemize}
\item
$n$ is even and $r=2s(p-1)$ or $r=2(s+1)(p-1)-1$
for $0\leq s\in\Z$,
\item
$n$ is odd, $r=(2s+1)(p-1)$ or $r=(2s+1)(p-1)-1$
for $0\leq s\in\Z$.
\end{itemize}
These map to non-zero elements
\[
\tilde{\dlQ}_rx,\beta\tilde{\dlQ}_rx
       \in\mathrm{H}_*(\Sigma_p;H_*(X;\F_p)^{\otimes p})
\]
depending on the parity of $r$. There is a canonical
isomorphism
\[
\mathrm{H}_*(\Sigma_p;H_*(X;\F_p)^{\otimes p})
          \xrightarrow{\;\iso\;}
                  H_*(E\Sigma_p\ltimes_{\Sigma_p}X^{(p)})
\]
and we also denote the images of $\tilde{\dlQ}_sx,\tilde{\beta\dlQ}_sx$
by the same symbols. The natural weak equivalence
\[
E\Sigma_p\ltimes_{\Sigma_p}X^{(p)}
   \xrightarrow{\;\sim\;} X^{(p)}/\Sigma_p
\]
induces an isomorphism
\[
\xymatrix{
\mathrm{H}_*(\Sigma_p;H_*(X;\F_p)^{\otimes p})\ar[r]_{\iso}\ar@/^19pt/[rr]^{\iso}
& H_*(E\Sigma_p\ltimes_{\Sigma_p}X^{(p)};\F_p)\ar[r]_(.57){\iso}
& H_*(X^{(p)}/\Sigma_p;\F_p)
}
\]
sending $\tilde{\dlQ}_rx$ to the element which we will denote
by $\bar{\dlQ}_rx\in H_*(X^{(p)}/\Sigma_p;\F_p)$. When~$p$
is odd, whenever $2r\geq n$ we will set
\begin{align*}
\bar{\dlQ}^rx &= (-1)^r\nu(n)\bar{\dlQ}_{(2r-n)(p-1)}x,
 & \beta\bar{\dlQ}^rx &= (-1)^r\nu(n)\bar{\dlQ}_{(2r-n)(p-1)-1}x,
\end{align*}
in keeping with upper indexing for Dyer-Lashof operations,
where
\[
\nu(n) = (-1)^{n(n-1)(p-1)/4}\biggl(((p-1)/2)!\biggr)^n,
\]
which does not depend on~$r$. If $p=2$, whenever $r\geq n$
we set
\[
\bar{\dlQ}^rx = \bar{\dlQ}_{r-n}x.
\]
The action of the Dyer-Lashof operations $\dlQ^r$ and
$\beta\dlQ^r$ on $H_*(\mathbb{P}X;\F_p)$ described by
Steinberger in~\cite{LNM1176}*{chapter~III} is consistent
with this notation; however, we will sometimes write
$\dlQ^r\.x$ or $\beta\dlQ^r\.x$ when applying such an
operation to an element
$x\in H_*(X;\F_p)\subseteq H_*(\mathbb{P}X;\F_p)$ to
avoid potential confusion when $X$ is itself a
commutative $S$-algebra.
\begin{lem}\label{lem:DLops-compare}
For a connective cofibrant $S$-module $X$, and an element
$x\in H_*(X;\F_p)$, under the natural map
\[
\xymatrix{
E\Sigma_p\ltimes_{\Sigma_p}X^{(p)}
\ar[r]_(.57){\;\sim\;}\ar@/^19pt/[rr]^\rho
& X^{(p)}/\Sigma_p\ar[r]
& \mathbb{P}X
}
\]
in $H_*(-;\F_p)$ we have
\[
\dlQ^rx = \rho_*(\bar{\dlQ}^rx),
\quad
\beta\dlQ^rx = \rho_*(\beta\bar{\dlQ}^rx).
\]
\end{lem}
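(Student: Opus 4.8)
The plan is to recognise the statement as an exercise in unwinding the definition of the Dyer-Lashof operations on $H_*(\mathbb{P}X;\F_p)$ and then comparing normalisations. A commutative $S$-algebra is in particular an $H_\infty$ ring spectrum, so for $B=\mathbb{P}X$ the multiplicative structure supplies a structure map $\xi_p\:E\Sigma_p\ltimes_{\Sigma_p}B^{(p)}\lra B$ in the stable homotopy category; it is the composite of the natural projection $E\Sigma_p\ltimes_{\Sigma_p}B^{(p)}\lra B^{(p)}/\Sigma_p$ (a weak equivalence when $B$ is cofibrant) with the map induced by the $\Sigma_p$-invariant $p$-fold multiplication $B^{(p)}\lra B$. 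Steinberger's operations $\dlQ^r\cdot y,\beta\dlQ^r\cdot y$ on $H_*(B;\F_p)$ are by definition obtained by applying $(\xi_p)_*$ to the classes of $\mathrm{H}_*(\Sigma_p;H_*(B;\F_p)^{\otimes p})\iso H_*(E\Sigma_p\ltimes_{\Sigma_p}B^{(p)};\F_p)$ represented by $e_r\otimes y^{\otimes p}$, with the index reparametrisation $r\mapsto(2r-n)(p-1)$ (respectively $(2r-n)(p-1)-1$ for the Bockstein version), the sign $(-1)^r$ and the factor $\nu(n)$ inserted exactly as in~\cite{JPM:Steenrodops}.

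First I would identify $\rho$ with a restriction of $\xi_p$. The wedge-summand inclusion $X=X^{(1)}/\Sigma_1\hookrightarrow\mathbb{P}X$ induces on $p$-th smash powers a $\Sigma_p$-equivariant map $X^{(p)}\lra(\mathbb{P}X)^{(p)}$ onto the $(1,\ldots,1)$-summand, with $\Sigma_p$ acting by permutation, hence a map $\inc\:E\Sigma_p\ltimes_{\Sigma_p}X^{(p)}\lra E\Sigma_p\ltimes_{\Sigma_p}(\mathbb{P}X)^{(p)}$. Chasing through the description of $\mathbb{P}X=\bigvee_j X^{(j)}/\Sigma_j$ and of its multiplication recalled in Section~\ref{sec:PX}, the composite $\xi_p\circ\inc$ is the map $E\Sigma_p\ltimes_{\Sigma_p}X^{(p)}\xrightarrow{\;\sim\;}X^{(p)}/\Sigma_p\lra\mathbb{P}X$, which is precisely the map $\rho$ of the statement. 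This step is formal, relying only on how $\mathbb{P}X$ is assembled from extended powers and how its $H_\infty$ structure map is built.

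Next I would invoke naturality. The isomorphism $\mathrm{H}_*(\Sigma_p;H_*(-;\F_p)^{\otimes p})\iso H_*(E\Sigma_p\ltimes_{\Sigma_p}(-)^{(p)};\F_p)$ and the algebraic classes $e_r\otimes(-)^{\otimes p}$ used before the statement are natural in the $S$-module, and $X\hookrightarrow\mathbb{P}X$ sends $x\in H_n(X;\F_p)$ to $x\in H_n(\mathbb{P}X;\F_p)$. Hence $\inc_*$ carries the class of $e_r\otimes x^{\otimes p}$ in $H_*(E\Sigma_p\ltimes_{\Sigma_p}X^{(p)};\F_p)$ to the corresponding class in $H_*(E\Sigma_p\ltimes_{\Sigma_p}(\mathbb{P}X)^{(p)};\F_p)$. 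Using the weak equivalence $E\Sigma_p\ltimes_{\Sigma_p}X^{(p)}\xrightarrow{\;\sim\;}X^{(p)}/\Sigma_p$ to regard $\bar{\dlQ}^rx$ --- a priori a class in $H_*(X^{(p)}/\Sigma_p;\F_p)$ --- as the normalised, reindexed class built from $e_{(2r-n)(p-1)}\otimes x^{\otimes p}$, and then applying $(\xi_p)_*$ together with $\rho=\xi_p\circ\inc$, one gets $\rho_*(\bar{\dlQ}^rx)=\dlQ^r\cdot x$ and $\rho_*(\beta\bar{\dlQ}^rx)=\beta\dlQ^r\cdot x$, since the index shift, the sign $(-1)^r$ and the factor $\nu(n)$ appear in the same way on both sides.

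The main obstacle is not geometric but a matter of matching conventions: one must check that the data recalled in the paragraphs preceding the statement --- the admissible values of $r$ for given parity of $n$, the translation $r\mapsto(2r-n)(p-1)$ together with its Bockstein variant, the sign $(-1)^r$, and the normalising unit $\nu(n)=(-1)^{n(n-1)(p-1)/4}\bigl(((p-1)/2)!\bigr)^n$ --- agree on the nose with those used by Steinberger in~\cite{LNM1176}*{chapter~III}, equivalently with May's in~\cite{JPM:Steenrodops}, so that the two families of operations coincide rather than differing by a unit of $\F_p$. At $p=2$ this comparison is the easier one, since $\nu(n)=1$ and there is no Bockstein, leaving only the shift $r\mapsto r-n$ to verify; once the conventions are aligned, the identity $\rho=\xi_p\circ\inc$ and the naturality above finish the argument.
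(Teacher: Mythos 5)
Your proposal is correct and follows essentially the same route as the paper: both arguments identify $\rho$ as the restriction of the structure map defining the Dyer-Lashof operations on $\mathbb{P}X$ (you phrase this via the $H_\infty$ structure map $\xi_p$, the paper via the monad algebra map $\mathbb{P}(\mathbb{P}X)\lra\mathbb{P}X$ together with cofibrant replacement, which is the same map restricted to the $p$-th extended power summand) and then conclude by naturality of $\mathrm{H}_*(\Sigma_p;(-)^{\otimes p})$ applied to the summand inclusion $X\lra\mathbb{P}X$. Your extra care about matching the normalisations $\nu(n)$, signs and index shifts is a reasonable addition, since the paper's definition of $\bar{\dlQ}^r$ was set up precisely to agree with Steinberger's conventions and the paper leaves that check implicit.
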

\begin{proof}
The basic observation is that a commutative $S$-algebra
is an algebra over the monad $\mathbb{P}\circ\mathbb{U}$
where the two model categories $\mathscr{M}_S$ and
$\mathscr{C}_S$ are related by the Quillen adjunction
of~\eqref{eq:QuillenAdj-C-M} with $R=S$.
\[
\xymatrix{
{\mathscr{C}_S} \ar@/_8pt/[rr]_{\mathbb{U}}
      && {\mathscr{M}_S} \ar@/_8pt/[ll]_{\mathbb{P}}
}
\]
The definition of the Dyer-Lashof operations for a
commutative $S$-algebra~$A$ involves the composition

\smallskip
\[
\xymatrix{
E\Sigma_p\ltimes_{\Sigma_p} (A^c)^{(p)}
     \ar[r]\ar@/^21pt/[rrrr]
& (A^c)^{(p)}/\Sigma_p \ar[r]
& A^{(p)}/\Sigma_p\ar[r]
& \mathbb{P}A\ar[r] & A
}
\]
where $(-)^c$ denotes cofibrant replacement in $\mathscr{M}_S$.
When $X$ is cofibrant in $\mathscr{M}_S$ and $A=\mathbb{P}X$,
we obtain a commutative diagram of the form
\[
\xymatrix{
E\Sigma_p\ltimes_{\Sigma_p} X^{(p)}\ar[d]\ar@/^21pt/[drrrr]^\rho
                                       &&& \\
E\Sigma_p\ltimes_{\Sigma_p} ((\mathbb{P}X)^c)^{(p)}\ar[r]
& ((\mathbb{P}X)^c)^{(p)}/\Sigma_p \ar[r]
& (\mathbb{P}X)^{(p)}/\Sigma_p\ar[r]
& \mathbb{P}(\mathbb{P}X)\ar[r] & \mathbb{P}X
}
\]
and applying $H_*(-;\F_p)$ to this gives the result.
\end{proof}

Using iterated extended powers associated with the iterated
wreath power subgroups
\[
\Sigma_p^{\wr p} =
\overset{\ell}{\overbrace{\Sigma_p\wr\cdots\wr\Sigma_p}}
                                \leq\Sigma_{p^\ell}
\]
we can form elements
\[
\dlQ^Ix =
\beta^{\epsilon_1}\bar{\dlQ}^{i_1}
   \cdots\beta^{\epsilon_\ell}\bar{\dlQ}^{i_\ell}x
     \in H_*(\mathbb{P}X;\F_p)
\]
for $x\in H_*(X;\F_p)$, where
$I = (\epsilon_1,i_1,\ldots,\epsilon_\ell,i_\ell)$,
$i_k>0$ and $\epsilon_k=0,1$ (with $\epsilon_k=0$
when $p=2$) can be interpreted in terms of the
homology of iterated wreath powers, and we obtain
the compatibility formula
\[
\dlQ^Ix = \rho^\ell_*(\bar{\dlQ}^Ix),
\]
where $\rho^\ell$ is defined in an obvious way from
the following diagram.
\[
\xymatrix{
E\Sigma_p^{\wr p}
  \ltimes_{\Sigma_p^{\wr p}} X^{(p^\ell)}
     \ar[d]\ar@/^21pt/[ddrrrr]^{\rho^\ell}
    &&& \\
E\Sigma_{p^\ell}\ltimes_{\Sigma_{p^\ell}} X^{(p^\ell)} \ar[d]
    &&& \\
E\Sigma_{p^\ell}\ltimes_{\Sigma_{p^\ell}}((\mathbb{P}X)^c)^{({p^\ell})}
                   \ar[r]
 & ((\mathbb{P}X)^c)^{({p^\ell})}/\Sigma_{p^\ell}\ar[r]
 & (\mathbb{P}X)^{({p^\ell})}/\Sigma_{p^\ell}\ar[r]
 & \mathbb{P}(\mathbb{P}X)\ar[r] & \mathbb{P}X
}
\]
We can now state an important result.
\begin{thm}\label{thm:deltaQ^Ix=0}
Let $X$ be a connective cofibrant $S$-module. Then
\[
(\delta_{(\mathbb{P}X,S)})_*\:H_*(\mathbb{P}X;\F_p)
                \to H_*(\Omega_S(\mathbb{P}X);\F_p)
\]
annihilates every element of the form $\dlQ^Ix$,
where $\len(I)>0$ and $x\in H_*(X;\F_p)$.
\end{thm}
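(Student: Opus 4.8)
\smallskip
\noindent\emph{Proof strategy.} Write $\delta=\delta_{(\mathbb{P}X,S)}$. The plan is to combine the description~\eqref{eq:delta-cpt} of $\delta$ as a coproduct of transfers with the transfer-vanishing Lemma~\ref{lem:Tr=0}, using the derivation property of Lemma~\ref{lem:Deriv-E} to handle the one remaining case.

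First I would reduce to admissible $I$: by the Adem relations $\dlQ^Ix$ is an $\F_p$-linear combination of terms $\dlQ^Jx$ with $J$ admissible, $\len(J)=\len(I)$, applied to the same class $x$, and $\delta_*$ is additive. So fix an admissible $I$ with $\ell=\len(I)\geq1$. In the weight grading on $H_*(\mathbb{P}X;\F_p)$ induced by $\mathbb{P}X=\bigvee_j X^{(j)}/\Sigma_j$, the class $\dlQ^Ix$ has weight $p^\ell$, because each Dyer--Lashof operation multiplies weight by $p$ and $x$ has weight $1$; hence $\dlQ^Ix$ lies in the summand $H_*(X^{(p^\ell)}/\Sigma_{p^\ell};\F_p)$. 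By~\eqref{eq:delta-cpt}, $\delta_*$ acts on this summand as $(\tau_{p^\ell-1,1})_*$, the transfer $\tau_{p^\ell-1,1}\:E\Sigma_{p^\ell}\ltimes_{\Sigma_{p^\ell}}X^{(p^\ell)}\to(E\Sigma_{p^\ell-1}\ltimes_{\Sigma_{p^\ell-1}}X^{(p^\ell-1)})\wedge X$. Using the standard identification $H_*(E\Sigma_n\ltimes_{\Sigma_n}X^{(n)};\F_p)\iso\mathrm{H}_*(\Sigma_n;V^{\otimes n})$ with $V=H_*(X;\F_p)$, the corresponding identification of the target with $\mathrm{H}_*(\Sigma_{p^\ell-1};V^{\otimes p^\ell})$, and the compatibility of the stable transfer of extended powers with the algebraic transfer, $(\tau_{p^\ell-1,1})_*$ is identified with $\Tr_{\Sigma_{p^\ell-1}}^{\Sigma_{p^\ell}}$. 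So it remains to show that this transfer kills $\dlQ^Ix$.

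Next I would invoke the structure of $H_*(\mathbb{P}X;\F_p)$ as a free graded-commutative algebra on the allowable monomials $\dlQ^Jy$: the admissible $\dlQ^Ix$ is either zero, one of these polynomial (or exterior) generators, or a $p$-th power $w^p$ with $w\in H_*(\mathbb{P}X;\F_p)$ (in fact a $p^k$-th power, $k\geq1$, of a shorter allowable monomial in $x$, the iterated bottom operation $\dlQ^Ix=x^{p^\ell}$ being the extreme case). If $\dlQ^Ix$ is a generator, then, having weight $p^\ell\geq p\geq2$, it is indecomposable, hence not a product of weight-one classes, hence not in the image of $H_*(X^{(p^\ell)};\F_p)=V^{\otimes p^\ell}$ in $H_*(X^{(p^\ell)}/\Sigma_{p^\ell};\F_p)$; as that image is precisely the group-homology-degree-zero part $\mathrm{H}_0(\Sigma_{p^\ell};V^{\otimes p^\ell})$, the class $\dlQ^Ix$ has strictly positive group-homology degree and Lemma~\ref{lem:Tr=0} yields $\Tr_{\Sigma_{p^\ell-1}}^{\Sigma_{p^\ell}}(\dlQ^Ix)=0$. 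If instead $\dlQ^Ix=w^p$, then Lemma~\ref{lem:Deriv-E} (the derivation property, iterated) gives $\delta_*(w^p)=p\,w^{p-1}\delta_*(w)=0$ in $\F_p$-homology when $w$ has even degree, while for odd $p$ and odd-degree $w$ we have $w^p=0$, and for $p=2$ we have $\delta_*(w^2)=2w\delta_*(w)=0$; in every case $\delta_*(\dlQ^Ix)=0$. The case $\dlQ^Ix=0$ is trivial, so the theorem follows.

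The step I expect to cost the most work is the bridge in the second paragraph: checking that $\delta$ decomposes summand by summand as the transfers $\tau_{p^\ell-1,1}$ of~\eqref{eq:delta-cpt}, that under the weak equivalence $E\Sigma_n\ltimes_{\Sigma_n}X^{(n)}\xrightarrow{\sim}X^{(n)}/\Sigma_n$ these induce the algebraic transfers $\Tr_{\Sigma_{p^\ell-1}}^{\Sigma_{p^\ell}}$ on $\mathrm{H}_*(\Sigma_{p^\ell};V^{\otimes p^\ell})$ with coefficients in $V^{\otimes p^\ell}=H_*(X;\F_p)^{\otimes p^\ell}$, and that $\dlQ^Ix$ indeed sits in the weight-$p^\ell$ summand with the group-homology degree claimed. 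These rely on~\eqref{eq:delta-cpt}, the compatibility of stable transfers of extended powers with algebraic transfers (\cites{LNM1176,LNM1213}), and the standard computation of $H_*(\mathbb{P}X;\F_p)$ together with the usual admissibility and excess bookkeeping for iterated Dyer--Lashof operations; granting these, the remainder is formal.
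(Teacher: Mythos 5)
Your proof follows the paper's own route: the paper's entire argument for this theorem is that it ``follows from the observation~\eqref{eq:delta-cpt} together with Lemma~\ref{lem:Tr=0}'', which is exactly your reduction of $\delta_*$ on the weight-$p^{\ell}$ summand to the transfer $\Tr_{\Sigma_{p^\ell-1}}^{\Sigma_{p^\ell}}$ and its vanishing in positive group-homology degrees. The one case you treat separately --- $\dlQ^Ix$ an iterated $p$-th power, which sits in group-homology degree zero where Lemma~\ref{lem:Tr=0} says nothing, and which you dispose of with the derivation property of Lemma~\ref{lem:Deriv-E} --- is a real omission in the paper's one-line proof that your argument correctly fills.
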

\begin{proof}
This follows from the observation~\eqref{eq:delta-cpt}
together with Lemma~\ref{lem:Tr=0}.
\end{proof}

We will give an explicit description of the algebra
in Theorem~\ref{thm:H*PX-Fp}.

Recall that if $A$ is a connective commutative $S$-algebra
for which $\pi_0(A)$ is augmented over $\F_p$, there is
an induced augmentation of commutative $S$-algebras
$A\to H\F_p$, making $H\F_p$ into an $A$-module.
Theorem~\ref{thm:deltaQ^Ix=0} generalises to give the
following result.
\begin{thm}\label{thm:deltaQ^Ix=0-gen}
Let $A$ be a connective commutative $S$-algebra. Then
\[
(\delta_{(A,S)})_*\:H_*(A;\F_p)\to H_*(\Omega_S(A);\F_p)
\]
annihilates every element of the form $\bar{\dlQ}^Ia$ with
$\len(I)>0$ and $a\in H_*(A;\F_p)$. Hence, if
$H\F_p$ is an $A$-module the homomorphism
\[
\Delta_{(A,S)}\:H_*(A;\F_p)
       \to H^A_*(\Omega_S(A);\F_p) = \TAQ_*(A,S;H\F_p)
\]
also annihilates all such elements.
\end{thm}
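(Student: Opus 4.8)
The plan is to reduce the statement about an arbitrary connective commutative $S$-algebra $A$ to the already-established case of a free algebra $\mathbb{P}X$ (Theorem~\ref{thm:deltaQ^Ix=0}). Given $a\in H_*(A;\F_p)$, choose a connective cofibrant $S$-module $X$ together with a map of $S$-modules $X\lra A$ whose image in homology contains a class $x$ mapping to $a$; for instance one may take $X$ to be a wedge of sphere $S$-modules realising a chosen set of module generators, or simply $X = A^c$ itself. By the universal property of $\mathbb{P}$ this extends to a map of commutative $S$-algebras $f\:\mathbb{P}X\lra A$, and by construction $f_*(x) = a$ in $H_*(A;\F_p)$, where on the left $x$ is regarded as an element of $H_*(X;\F_p)\subseteq H_*(\mathbb{P}X;\F_p)$. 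The key point is that the operations $\bar{\dlQ}^I$ are natural for maps of commutative $S$-algebras: by Lemma~\ref{lem:DLops-compare} (and its iterated version via $\rho^\ell$) the class $\bar{\dlQ}^Ix\in H_*(\mathbb{P}X;\F_p)$ equals $\dlQ^I\.x$, and $f_*$ commutes with the Dyer--Lashof operations, so $f_*(\dlQ^Ix) = \dlQ^I f_*(x) = \bar{\dlQ}^Ia$.

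Next I would invoke naturality of the universal derivation. For the map $f\:\mathbb{P}X\lra A$ of commutative $S$-algebras over $S$, there is an induced map $\Omega_S(\mathbb{P}X)\lra \Omega_S(A)$, and the square relating $\delta_{(\mathbb{P}X,S)}$ and $\delta_{(A,S)}$ through $f$ and this induced map commutes in $\bar{h}\mathscr{M}_S$ — this is the functoriality of $(B,S)\mapsto \bigl(\delta_{(B,S)}\:B\lra\Omega_S(B)\bigr)$ recorded in the recollections section. Applying $H_*(-;\F_p)$ gives a commutative square of graded $\F_p$-vector spaces. Chasing $\bar{\dlQ}^Ia$ around it: lift to $\dlQ^Ix = \bar{\dlQ}^Ix\in H_*(\mathbb{P}X;\F_p)$, apply $(\delta_{(\mathbb{P}X,S)})_*$ to get $0$ by Theorem~\ref{thm:deltaQ^Ix=0} (since $\len(I)>0$), hence the image in $H_*(\Omega_S(A);\F_p)$ is also $0$. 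This proves the first assertion.

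The second assertion is then immediate: $\Delta_{(A,S)}$ is by definition the composite of $(\delta_{(A,S)})_*\:H_*(A;\F_p)\lra H_*(\Omega_S(A);\F_p)$ (here with $E = H\F_p$ and ground ring $S$, so $E^S_* = H_*(-;\F_p)$) with the natural map $H_*(\Omega_S(A);\F_p)\lra H^A_*(\Omega_S(A);\F_p) = \TAQ_*(A,S;H\F_p)$, so anything killed by $(\delta_{(A,S)})_*$ is killed by $\Delta_{(A,S)}$.

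The main obstacle is making precise the naturality of $\bar{\dlQ}^I$ and of $\delta_{(-,S)}$ in the correct homotopy categories, and checking that the various identifications (the isomorphism $\Omega_R(\mathbb{P}_RX)\iso \mathbb{P}_RX\wedge_R X$, the weak equivalences $E\Sigma_p\ltimes_{\Sigma_p}X^{(p)}\xrightarrow{\sim} X^{(p)}/\Sigma_p$, and the map $\rho$ of Lemma~\ref{lem:DLops-compare}) are compatible with $f_*$; once one grants that the Dyer--Lashof operations are natural for maps of commutative $S$-algebras — which is part of Steinberger's construction in~\cite{LNM1176}*{chapter~III} — the reduction is formal. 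I do not expect any genuinely new calculation beyond what is already in Theorem~\ref{thm:deltaQ^Ix=0} and Lemma~\ref{lem:DLops-compare}.
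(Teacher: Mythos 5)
Your proposal is correct and follows essentially the same route as the paper's proof: take $X=A^c$, a cofibrant replacement of the underlying $S$-module of $A$, extend the multiplication to a morphism of commutative $S$-algebras $\mathbb{P}A^c\lra A$, use naturality of the universal derivation and of the Dyer--Lashof operations to lift $\bar{\dlQ}^Ia$ to $H_*(\mathbb{P}A^c;\F_p)$, and conclude from Theorem~\ref{thm:deltaQ^Ix=0}. One small caveat: your alternative choice of $X$ as a wedge of spheres realising module generators would only reach classes in the image of the Hurewicz homomorphism, so the cofibrant replacement $A^c$ is the choice to use.
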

\begin{proof}
Using the observation in the Proof of
Lemma~\ref{lem:DLops-compare}, we know there is a morphism
of commutative $S$-algebras $\mathbb{P}A\to A$ extending
the multiplication. Choose a cofibrant replacement $A^c\to A$
for the underlying $S$-module of $A$. By naturality there
is a commutative diagram in the homotopy category of $S$-modules
\[
\xymatrix{
\mathbb{P}A^c\ar[d]_{\delta_{(\mathbb{P}A^c,S)}}\ar[rr]
&& \mathbb{P}A\ar[d]_{\delta_{(\mathbb{P}A,S)}}\ar[rr]
&&  A\ar[d]_{\delta_{(A,S)}} \\
\mathbb{P}A^c\wedge A^c\ar[rr]
&& \Omega_S(\mathbb{P}A)\ar[rr]
&& \Omega_S(A)
}
\]
and on applying $H_*(-)=H_*(-;\F_p)$ we obtain an algebraic
commutative diagram.
\[
\xymatrix{
H_*(\mathbb{P}A^c)\ar[d]_{(\delta_{(\mathbb{P}A^c,S)})_*}\ar[rr]
&& H_*(\mathbb{P}A)\ar[d]_{(\delta_{(\mathbb{P}A,S)})_*}\ar[rr]
&& H_*(A)\ar[d]_{(\delta_{(A,S)})_*} \\
H_*(\mathbb{P}A^c\wedge A^c)\ar[rr]
&& H_*(\Omega_S(\mathbb{P}A))\ar[rr]
&& H_*(\Omega_S(A))
}
\]
Since an element of the form $\dlQ^Ia$ lifts back to an
element $\dlQ^Ia'\in H_*(\mathbb{P}A^c)$ as explained
above, the result follows. The result about $\theta'$ is
immediate from the definition~\eqref{eq:TAQ-Hurewiczhom}.
\end{proof}

\section{The reduced free commutative algebra functor}
\label{sec:redPX}

In this section we describe a modification $\tilde{\mathbb{P}}_R$
of the usual free functor $\mathbb{P}_R$ from $R$-modules to
commutative $R$-algebras. Recall that $\mathbb{P}_R$ is
left adjoint to the forgetful functor and when viewed as
an endofunctor of $\mathscr{M}_R$, its algebras are precisely
the commutative $R$-algebras. Then $\tilde{\mathbb{P}}_R$
gives an endofunctor on the comma category $S^0_R/\mathscr{M}_R$
of $R$-modules under the $R$-sphere $S^0_R$. Roughly speaking,
the difference between $\mathbb{P}_RX$ and $\tilde{\mathbb{P}}_RX$
is that in the latter, the morphism $S^0_R\to X$ becomes identified
with the unit. There are two reasons why this is useful to us.
First we will investigate the free algebra for an $R$-module
with such a `unit' morphism corresponding to a bottom cell (at
least when $R=S$). Second, a unital commutative $R$-algebra has
a natural choice of such morphism induced by the unit, and algebras
over $\tilde{\mathbb{P}}_R$ in $S^0_R/\mathscr{M}_R$ are the
commutative $R$-algebras; a similar observation was made by
\Kriz{} \& May~\cite{IK&JPM:Asterisque}, see the discussion
leading up to and including proposition~3.7.

Throughout, we fix a cofibrant commutative $S$-algebra~$R$.
The two model categories $\mathscr{M}_R$ and $\mathscr{C}_R$
are related by the Quillen adjunction
\[
\xymatrix{
{\mathscr{C}_R} \ar@/_8pt/[rr]_{\mathbb{U}}
         && {\mathscr{M}_R} \ar@/_8pt/[ll]_{\mathbb{P}_R}
}
\]
where the right adjoint $\mathbb{U}$ is the forgetful functor.
For a cofibrant $R$-module~$Z$, inclusion of the basepoint
$*\to X$ induces a cofibration of commutative $R$-algebras
$R=\mathbb{P}_R*\to\mathbb{P}_RZ$, so $\mathbb{P}_RZ$ is
cofibrant in the model category $\mathscr{C}_R$. More
generally a(n acyclic) cofibration $f\:X\to Y$ in
$\mathscr{M}_R$ induces a(n acyclic) cofibration
$\mathbb{P}_Rf\:\mathbb{P}_R X\to\mathbb{P}_RY$ in
$\mathscr{C}_R$.

In $\mathscr{M}_R$, $R$ is not cofibrant and we denote its
functorial cofibrant replacement by $S^0_R=\mathbb{F}_RS^0$
and a weak equivalence induced by a map of spectra $S_R^0\to R$
which represents the unit.
\[
\xymatrix{
{*}\ar@{ >->}[r]\ar[dr] & S^0_R\ar@{->>}[d]^{\sim}\\
   & R
}
\]
There is a unique induced morphism $\mathbb{P}_R S^0_R\to R$
in $\mathscr{C}_R$, but this need not be a cofibration.
Using the functorial factorisation in~$\mathscr{C}_R$
we obtain a commutative diagram in $\mathscr{C}_R$
\begin{equation}\label{eq:tildeR}
\xymatrix{
& \ar@{ >->}[dl]\mathbb{P}_R S^0_R\ar[dr] & \\
\widetilde{R}\ar@{->>}[rr]_{\sim} & & R
}
\end{equation}
which we use to fix the left hand arrow.

We will make use of the comma category $S^0_R/\mathscr{M}_R$
of $R$-modules under $S^0_R$, whose objects are the morphisms
$S^0_R\to X$ and whose morphisms are the commuting diagrams
\[
\xymatrix{
& \ar[dl] S^0_R\ar[dr] & \\
X\ar[rr] & & Y
}
\]
with initial object $\Id_{S^0_R}$ and terminal object $S^0_R\to*$.
This inherits a model structure from $\mathscr{M}_R$. Given
$i\:S^0_R\to X$ in $S^0_R/\mathscr{M}_R$, we obtain the induced
morphism $\mathbb{P}_Ri\:\mathbb{P}_RS^0_R\to\mathbb{P}_RX$ in
$\mathscr{C}_R$. If $i$ is a cofibration, then $i$ is cofibrant
in $S^0_R/\mathscr{M}_R$ and
$\mathbb{P}_Ri\:\mathbb{P}_RS^0_R\to\mathbb{P}_RX$ is a cofibration
in $\mathscr{C}_R$; we will then write $X/S^0_R$ for the cofibre
of~$i$.
%
%We will use the following notion which was used extensively
%in~\cite{AJB-JPM} for $p$-local spectra.
%\begin{defn}\label{defn:Hurweicz}
%A spectrum (or equivalently an $S$-module) $X$ is a \emph{Hurewicz
%spectrum} if $X$ is cofibrant, connective and $\pi_0(X)$ is cyclic
%and non-zero. We generalise this notion to $p$-local spectra (or
%equivalently $S_{(p)}$-module) in the obvious way.
%\end{defn}
%
%Let $X$ be a Hurewicz spectrum and let $i\:S^0\to X$
%be a map which represents a generator of $\pi_0(X)$.
We obtain a pushout diagram of commutative $R$-algebras
\[
\xymatrix{
\mathbb{P}_RS^0_R\ar@{}[dr]|{\PO}
%{\text{\Large$\ulcorner$}}
                  \ar[r]^{\mathbb{P}_Ri}\ar@{ >->}[d]
                        & \mathbb{P}_RX\ar@{ >->}[d] \\
\tilde{R}\ar[r] & \tilde{R}\wedge_{\mathbb{P}_RS^0_R}\mathbb{P}_RX
}
\]
and we set
\[
\tilde{\mathbb{P}}_RX =
   \tilde{R}\wedge_{\mathbb{P}_RS^0_R}\mathbb{P}_RX.
\]
If $i^c\:S^0_R\to X^c$ is the cofibrant replacement of~$i$
in the comma category, then the pushout diagram of solid
arrows in
\[
\xymatrix{
& & \mathbb{P}_RX\ar@/^19pt/@{ >-->}[ddd] \\
\mathbb{P}_RS^0_R\ar@{}[dr]|{\PO}
%{\text{\Large$\ulcorner$}}
\ar@/^19pt/@{-->}[urr]^{\mathbb{P}_Ri}\ar@{ >->}[r]^{\mathbb{P}_Ri^c}\ar@{ >->}[d]
         & \mathbb{P}_RX^c\ar@{ >->}[d]\ar@{-->}[ur] & \\
\tilde{R}\ar@{ >->}[r]\ar@/_19pt/@{-->}[drr]
& \tilde{R}\wedge_{\mathbb{P}_RS^0_R}\mathbb{P}_RX^c\ar@{-->}[dr] \\
& & \tilde{R}\wedge_{\mathbb{P}_RS^0_R}\mathbb{P}_RX
}
\]
defines the homotopy pushout of the first diagram,
\[
\tilde{\mathbb{P}}^h_RX =
   \tilde{R}\wedge_{\mathbb{P}_RS^0_R}\mathbb{P}_RX^c.
\]
Of course $\tilde{\mathbb{P}}^h_RX$ is well-defined
in the homotopy category $\bar{h}\mathscr{C}_R$, and
we have in effect defined it by making functorial
choices.

The model categories $S^0_R/\mathscr{M}_R$ and $\mathscr{C}_R$
are related by the Quillen adjunction
\[
\xymatrix{
{\mathscr{C}_R} \ar@/_8pt/[rr]_{\tilde{\mathbb{U}}}
 && {S^0_R/\mathscr{M}_R} \ar@/_8pt/[ll]_{\tilde{\mathbb{P}}_R}
}
\]
where the right adjoint $\tilde{\mathbb{U}}$ is the
forgetful functor sending $A$ to the composition

\[
\xymatrix{
S^0_R\ar[r]\ar@/^15pt/[rr] & R \ar[r] & A
}
\]
in $\mathscr{M}_R$. The total left derived functor of
$\tilde{\mathbb{P}}_R$ is $\tilde{\mathbb{P}}^h_R$
and
\[
\xymatrix{
{\bar{h}\mathscr{C}_R} \ar@/_8pt/[rr]_{\tilde{\mathbb{U}}^h}
 && {\bar{h}(S^0_R/\mathscr{M}_R)}\ar@/_8pt/[ll]_{\tilde{\mathbb{P}}^h_R}
}
\]
is a derived adjunction on homotopy categories.
%%%
%%%Using the canonical factorisation of $\mathbb{P}_RS^0_R\to R$
%%%into a cofibration and an acyclic fibration in $\mathscr{C}_R$,
%%%this can be replaced by a pushout diagram
%%%\[
%%%\xymatrix{
%%%& \ar@{.>}[dl]\mathbb{P}_RS^0_R
%%%\ar@{}[dr]|{\PO}
%%% %{\text{\Large$\ulcorner$}}
%%%\ar@{ >->}[r]^{\mathbb{P}_Ri}\ar@{ >->}[d]
%%%                             & \mathbb{P}_RX\ar@{ >->}[d] \\
%%%R & \ar@{->>}[l]_(.4)\sim R' \ar@{ >->}[r] & \tilde{\mathbb{P}}_RX
%%%}
%%%\]
%%%so that
%%%\[
%%%\tilde{\mathbb{P}}_RX = R' \wedge_{\mathbb{P}_RS^0_R}\mathbb{P}_RX
%%%\]
%%%is the homotopy pushout of the original diagram.

%An alternative homotopy equivalent model for $\tilde{\mathbb{P}}X$
%is given by the pushout
%\[
%\xymatrix{
%\mathbb{P}S^0
%\ar@{}[dr]|{\PO}
%%% %{\text{\Large$\ulcorner$}}
%%%\ar[r]^{i'}\ar@{ >->}[d]
%                             & \mathbb{P}X\ar@{ >->}[d] \\
%\mathbb{P}D^1\ar[r] & \tilde{\mathbb{P}}X
%}
%\]
%in which the vertical maps are cofibrations and the upper
%horizontal map $i'$ is induced from the composition
%\[
%S^0 \xrightarrow{\;\pinch\;} S^0\vee S^0
%   \xrightarrow{\;-\id\vee i\;}S \vee X \to \mathbb{P}X.
%\]
\begin{rem}\label{rem:S^0/pushouts}
In $S_R^0/\mathscr{M}_R$, pushouts are defined using pushouts
in $\mathscr{M}_R$, and we use the symbol $\undervee{S^0_R}$
to indicate such a pushout.
\[
\xymatrix{
S^0_R \ar[r] \ar[d]
\ar@{}[dr]|{\PO} & Y\ar[d] \\
X\ar[r] & X\undervee{S^0_R}Y
}
\]
Since the reduced free algebra functor
$\tilde{\mathbb{P}}_R\:\mathscr{M}_R\to\mathscr{C}_R$ is
a left adjoint it preserves pushouts, so for two $R$-modules
$X,Y$ under $S^0_R$,
\begin{equation}\label{eq:S^0/pushouts}
\tilde{\mathbb{P}}_R(X\undervee{S^0_R}Y) \iso
      \tilde{\mathbb{P}}_RX \wedge_R \tilde{\mathbb{P}}_RY.
\end{equation}
In particular, if we have a connective CW $R$-module with
distinguished bottom cell $S^0_R\to X$, then the $n$-skeleton
$X^{[n]}$ gives rise to the $n$-skeleton of the CW commutative
$R$-algebra $\tilde{\mathbb{P}}_R X$,
\begin{equation}\label{eq:S^0/skeleta}
\tilde{\mathbb{P}}_R^{\<n\>} X
    = (\tilde{\mathbb{P}}_R X)^{\<n\>}
    = \tilde{\mathbb{P}}_R(X^{[n]}).
\qedhere
\end{equation}
\end{rem}

We already know that for cofibrant $X$, in the homotopy
category $\bar{h}\mathscr{M}_{\mathbb{P}_RX}$,
\[
\Omega_R(\mathbb{P}_RX) \iso \mathbb{P}_RX\wedge_R X.
\]
\begin{prop}\label{prop:Omega-redP}
Let $X\in S^0_R/\mathscr{M}_R$ be cofibrant. Then in
the homotopy category $\bar{h}\mathscr{M}_{\tilde{\mathbb{P}}_RX}$,
\[
\Omega_R(\tilde{\mathbb{P}}_RX) \iso \tilde{\mathbb{P}}_R X\wedge_R X/S^0_R.
\]
\end{prop}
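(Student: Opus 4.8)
The plan is to deduce the formula for $\Omega_R(\tilde{\mathbb{P}}_RX)$ from the corresponding one for the unreduced functor by exploiting the pushout presentation $\tilde{\mathbb{P}}_RX = \tilde{R}\wedge_{\mathbb{P}_RS^0_R}\mathbb{P}_RX$ and the standard behaviour of the cotangent complex under base change and pushouts of commutative algebras. First I would recall that for a pushout square of commutative $R$-algebras
\[
\xymatrix{
C\ar[r]\ar[d] & A\ar[d] \\
B\ar[r] & A\wedge_C B
}
\]
with suitable cofibrancy, there is a base-change equivalence $\Omega_R(A\wedge_C B)\iso (A\wedge_C B)\wedge_A\Omega_R(A)\;\undervee{}\;(A\wedge_C B)\wedge_B\Omega_R(B)$, i.e. the cotangent complex of the homotopy pushout is the wedge (over the pushout) of the base-changed cotangent complexes; equivalently, one has the transitivity/flat base-change cofibre sequences of Basterra. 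Applying this to $C=\mathbb{P}_RS^0_R$, $A=\mathbb{P}_RX$, $B=\tilde{R}$, and using $\tilde{\mathbb{P}}_RX = A\wedge_C B$, I would reduce to computing the three cotangent complexes $\Omega_R(\mathbb{P}_RS^0_R)$, $\Omega_R(\mathbb{P}_RX)$ and $\Omega_R(\tilde R)$, each after base change to $\tilde{\mathbb{P}}_RX$.

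For the first two, the key input is the identity $\Omega_R(\mathbb{P}_RZ)\iso\mathbb{P}_RZ\wedge_R Z$ from~\cite{BGRtaq}*{proposition~1.6} (recalled in the excerpt), so $\Omega_R(\mathbb{P}_RX)\iso\mathbb{P}_RX\wedge_R X$ and $\Omega_R(\mathbb{P}_RS^0_R)\iso\mathbb{P}_RS^0_R\wedge_R S^0_R$. For the third, since $\tilde R\to R$ is a weak equivalence of commutative $R$-algebras and $\Omega_R(R)\homeq *$, we get $\Omega_R(\tilde R)\homeq *$; thus the $B$-summand contributes nothing and we are left with the cofibre sequence (after smashing everything up to $\tilde{\mathbb{P}}_RX$)
\[
\tilde{\mathbb{P}}_RX\wedge_R S^0_R \lra \tilde{\mathbb{P}}_RX\wedge_R X \lra \Omega_R(\tilde{\mathbb{P}}_RX),
\]
where the first map is induced by the bottom-cell inclusion $i\:S^0_R\lra X$ smashed with $\tilde{\mathbb{P}}_RX$ (this is exactly the image under base change of the map $\Omega_R(\mathbb{P}_RS^0_R)\to\Omega_R(\mathbb{P}_RX)$ induced by $\mathbb{P}_Ri$, which under the identification above is $\mathbb{P}_Ri\wedge i$). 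Since smashing with the cofibrant module $\tilde{\mathbb{P}}_RX$ preserves the cofibre sequence $S^0_R\to X\to X/S^0_R$, the cofibre of the first map is precisely $\tilde{\mathbb{P}}_RX\wedge_R X/S^0_R$, and the result follows.

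The main obstacle is bookkeeping the base-change and cofibrancy hypotheses: one must make sure that the pushout square presenting $\tilde{\mathbb{P}}_RX$ is (or may be replaced by) a homotopy pushout of cofibrant commutative $R$-algebras along a cofibration — which is exactly what the functorial-factorisation setup producing $\tilde{R}$ and the cofibration $\mathbb{P}_RS^0_R\rightarrowtail\tilde R$ was arranged to guarantee, together with the fact that $\mathbb{P}_Ri\:\mathbb{P}_RS^0_R\to\mathbb{P}_RX$ is a cofibration when $i$ is — so that Basterra's base-change equivalence for $\Omega$ genuinely applies, and so that the identification of the connecting map with $\tilde{\mathbb{P}}_RX\wedge_R i$ is literal rather than merely "up to homotopy in an unspecified sense". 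Once the square is known to be a homotopy pushout of cofibrant objects, everything else is a diagram chase combining the known value of $\Omega_R\circ\mathbb{P}_R$ with the triviality of $\Omega_R(R)$.
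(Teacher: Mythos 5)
Your proposal is correct and follows essentially the same route as the paper: both arguments exploit the pushout presentation $\tilde{\mathbb{P}}_RX=\tilde{R}\wedge_{\mathbb{P}_RS^0_R}\mathbb{P}_RX$ and combine Basterra's flat base change and transitivity cofibre sequences with $\Omega_R(\mathbb{P}_RZ)\iso\mathbb{P}_RZ\wedge_RZ$ and $\Omega_R(\tilde R)\simeq *$ to exhibit $\Omega_R(\tilde{\mathbb{P}}_RX)$ as the cofibre of $\tilde{\mathbb{P}}_RX\wedge_R S^0_R\to\tilde{\mathbb{P}}_RX\wedge_R X$, hence as $\tilde{\mathbb{P}}_RX\wedge_R X/S^0_R$. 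The one quibble is that your opening display, presenting $\Omega_R(A\wedge_C B)$ as a plain wedge of the two base-changed cotangent complexes, is not correct as stated (it omits the term $(A\wedge_C B)\wedge_C\Omega_R(C)$ over which one must form the homotopy pushout), but since your actual computation uses the correct three-term version and the $\tilde R$-term vanishes, the argument goes through.
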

\begin{proof}
First we recall some observations appearing in~\cite{BGRtaq}.
For a pushout diagram of cofibrations of commutative
$R$-algebras
\[
\xymatrix{
A \ar@{ >->}[r] \ar@{ >->}[d]
\ar@{}[dr]|{\PO}
%{\text{\Large$\ulcorner$}}
                  & B\ar@{ >->}[d] \\
C\ar@{ >->}[r] & B\wedge_A C
}
\]
by \cite{MBtaq}*{proposition~4.6} we have
\begin{equation}\label{eq:Omega-redP-1}
\Omega_B(B\wedge_A C) \sim B\wedge_A \Omega_A(C).
\end{equation}

%%%When $A=\mathbb{P}_RZ$,
%%%for a cofibrant $R$-module~$Z$, and $C=\mathbb{P}_RCZ$
%%%(where $CZ$ denotes the cone on $Z$) with the natural
%%%inclusion $\mathbb{P}_RZ\to\mathbb{P}_RCZ$, this gives
%%%\begin{align}
%%%\Omega_{\mathbb{P}_RZ}(B\wedge_{\mathbb{P}_RZ}\mathbb{P}_RCZ)
%%%   &\sim B\wedge_{\mathbb{P}_RZ}\Omega_{\mathbb{P}_RZ}(\mathbb{P}_RCZ)
%%%       \notag \\
%%%   &\sim B\wedge_{\mathbb{P}_RZ}\mathbb{P}_RCZ \wedge_R \Sigma Z,
%%%%\label{eq:OmegatildePX}
%%%\end{align}
%%%where the last equivalence follows from~\cite{BGRtaq}*{(1.16)}.

%Now take $Z=S^0_R$, assume that $i\:S^0\to X$ is a cofibration,
%and also take $B=\mathbb{P}_RX$ so that
Now assume that $i\:S^0_R\to X$ is a cofibration, and consider
\[
\tilde{\mathbb{P}}_RX =
\tilde{R}\wedge_{\mathbb{P}_RS^0_R}\mathbb{P}_RX.
\]
Notice that by \eqref{eq:Omega-redP-1},
\[
\Omega_{\mathbb{P}_RX}(\tilde{\mathbb{P}}_RX) \sim
\mathbb{P}_RX\wedge_{\mathbb{P}_RS^0_R}\Omega_{\mathbb{P}_RS^0_R}(\tilde{R}),
\]
and since there is a weak equivalence of $R$-algebras
$\tilde{R}\xrightarrow{\sim}R$, the sequence
\[
R\to\mathbb{P}_RS^0_R\to\tilde{R}
\]
has an associated cofibre sequence of the form
\[
\xymatrix{
\tilde{R}\wedge_{\mathbb{P}_RS^0_R}\Omega_R(\mathbb{P}_RS^0_R)\ar[r] &
\Omega_R(\tilde{R})\ar[r] & \Omega_{\mathbb{P}_RS^0_R}(\tilde{R})\ar[r] &
\cdots
}
\]
where
\[
\Omega_R(\tilde{R})\sim\Omega_R(R)\sim *.
\]
Therefore
\[
\Omega_{\mathbb{P}_RS^0_R}(\tilde{R}) \sim
\tilde{R}\wedge_{\mathbb{P}_RS^0_R}\Sigma\Omega_R(\mathbb{P}_RS^0_R)
\sim \tilde{R}\wedge_{\mathbb{P}_RS^0_R}\mathbb{P}_RS^0_R\wedge_R\Sigma S^0_R
\sim \tilde{R}\wedge_R\Sigma S^0_R,
\]
and so
\[
\Omega_{\mathbb{P}_RX}(\tilde{\mathbb{P}}_RX) \sim
  \tilde{\mathbb{P}}_RX\wedge_R\Sigma S^0_R.
\]
Similarly, from the sequence
\[
R \to \mathbb{P}_RX \to \tilde{R}\wedge_{\mathbb{P}_RS^0_R}\mathbb{P}_RX
\]
we obtain a cofibre sequence of $\tilde{\mathbb{P}}_RX$-modules
\[
\xymatrix@C=0.7cm@R=1.0cm{
\tilde{\mathbb{P}}_RX \wedge_{\mathbb{P}_RX}\Omega_R(\mathbb{P}_RX)\ar[r]
& \Omega_R(\tilde{\mathbb{P}}_RX) \ar[r]
& \Omega_{\mathbb{P}_RX}(\tilde{\mathbb{P}}_RX)\ar[r]
& \tilde{\mathbb{P}}_RX \wedge_{\mathbb{P}_RX}\Sigma\Omega_R(\mathbb{P}_RX)
\ar[r] & \cdots
}
\]
which is equivalent to
\[
\xymatrix{
\tilde{\mathbb{P}}_RX \wedge_R X \ar[r]
 & \Omega_R(\tilde{\mathbb{P}}_RX) \ar[r]
 & \tilde{\mathbb{P}}_RX\wedge_R \Sigma S^0_R \ar[r]
 & \tilde{\mathbb{P}}_RX \wedge_R \Sigma X \ar[r]
 & \cdots
}
\]
and so
\[
\Omega_R(\tilde{\mathbb{P}}_RX) \sim
              \tilde{\mathbb{P}}_RX \wedge_R X/S^0_R,
\]
as required.
\end{proof}

Now let $A$ be a commutative $R$-algebra. On composing
the unit $R\to A$ with the weak equivalence $S^0_R\to R$
we obtain the object
\[
S^0_R\xrightarrow{\;\sim\;}R
                    \xrightarrow{\;\ph{\sim}\;} A
\]
in $S^0_R/\mathscr{M}_R$. Using functorial factorisation
we obtain a cofibrant replacement
\[
\xymatrix{
& \ar@{ >->}[dl] S^0_R\ar[dr] & \\
A^c\ar@{->>}[rr]^{\sim} && A
}
\]
and so
\[
\tilde{\mathbb{P}}^h_RA =
     \tilde{R}\wedge_{\mathbb{P}_RS^0_R}\mathbb{P}_RA^c.
\]
\begin{rem}\label{rem:tildePA->A}
The multiplication on a commutative $R$-algebra~$A$ extends
to a morphism of commutative $R$-algebras
$\tilde{\mathbb{P}}_RA\to A$. This follows from the evident
commutative diagram of solid arrows
\[
\xymatrix{
\mathbb{P}_RS^0_R
\ar[r]\ar@{ >->}[d]\ar@{}[dr]|{\PO}
%{\text{\Large$\ulcorner$}}
& \mathbb{P}_RA\ar[d]\ar@/^17pt/[ddr] &   \\
\tilde{R} \ar[r]\ar@{->>}[d]^{\sim} & \tilde{\mathbb{P}}_RA\ar@{-->}[dr] &  \\
R\ar@/_17pt/[rr] &&  A
}
\]

\bigskip
\noindent
where the curved arrows come from the unit and extension
of the product respectively.
\end{rem}

%%%%%%%%%%%%%%%%%%
%\section{The free commutative $S$-algebra functor and
%\protect$\Omega^\infty\Sigma^\infty Z$} \label{sec:PX&QZ}

For completeness we recall a useful relationship between
the free functor on a suspension spectrum and the suspension
spectrum of the associated infinite loop space. This can
be found in~\cite{NJK:LocAQG}*{section~4} for example.

Let $Z$ be a connected based space. The infinite loop space
$\Omega^\infty \Sigma^\infty Z$ gives rise to a commutative
$S$-algebra $\Omega^\infty\Sigma^\infty_+Z$, \ie, the
suspension spectrum of the based space
$\Omega^\infty\Sigma^\infty Z$ with a disjoint basepoint.

The natural (based) map $Z\to\Omega^\infty\Sigma^\infty Z$
viewed as an unbased map induces a based map
\[
\Sigma^\infty_+Z\to\Sigma^\infty_+\Omega^\infty\Sigma^\infty Z
\]
which extends uniquely to a morphism of ring spectra
\[
\mathbb{P}\Sigma^\infty_+Z\to\Sigma^\infty_+\Omega^\infty\Sigma^\infty Z.
\]
The base points in $Z$ and $\Omega^\infty\Sigma^\infty Z$
pick out maps from the sphere $S$ and there is a commutative
diagram of commutative $S$-algebras
\begin{equation}\label{eq:PZ-QZ}
\xymatrix{
\mathbb{P}S^0\ar[r]\ar@{ >->}[d]\ar@{}[dr]|{\PO}
%{\text{\Large$\ulcorner$}}
& \mathbb{P}\Sigma^\infty_+ Z\ar@/^17pt/[dddr]\ar@{-->}[d] & \\
\tilde{S}\ar@{->>}[d]_{\sim}\ar@/_17pt/[ddrr]\ar@{-->}[r]
& \tilde{\mathbb{P}}\Sigma^\infty_+ Z \ar@{-->}[ddr]&\\
S\ar@/_17pt/[drr]&& \\
&& \mathbb{P}\Sigma^\infty_+\Omega^\infty\Sigma^\infty Z
}
\end{equation}
where the acyclic fibration $\tilde{S}\to S$
is that defined in~\eqref{eq:tildeR}.

\begin{prop}\label{prop:PZ-QZ}
For a connected based space $Z$, the morphism
\[
\tilde{\mathbb{P}}\Sigma^\infty Z
    \xrightarrow{\;\sim\;}
     \Sigma^\infty_+\Omega^\infty\Sigma^\infty Z
\]
of~\eqref{eq:PZ-QZ} is a weak equivalence.
\end{prop}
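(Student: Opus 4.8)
The plan is to verify that the morphism $\tilde{\mathbb{P}}\Sigma^\infty Z \to \Sigma^\infty_+\Omega^\infty\Sigma^\infty Z$ is a weak equivalence by checking it on ordinary homology (with field coefficients) and rationally, invoking the fact that both sides are connective spectra and the map is one of ring spectra. The key computational inputs are Theorem~\ref{thm:H*redPX-Fp} and Theorem~\ref{thm:H*redPX-Q} describing $H_*(\tilde{\mathbb{P}}X)$ as a free commutative graded algebra on Dyer--Lashof monomials $\bar{\dlQ}^Ix_j$ applied to a basis for $\tilde{H}_*(X)$, together with the classical computation of $H_*(\Omega^\infty\Sigma^\infty Z)$ due to Dyer--Lashof (and, in the present packaging, the overview of~\cite{JPM:HomOps}).

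First I would set $X = \Sigma^\infty_+ Z$ in the comma-category setup: the disjoint basepoint of $Z$ picks out the distinguished bottom cell $S^0 \to \Sigma^\infty_+ Z$, and then $X/S^0 \simeq \Sigma^\infty Z$ (the reduced suspension spectrum of the based space $Z$), so $\tilde{H}_*(X;\F_p) = H_*(\Sigma^\infty Z;\F_p) = \tilde{H}_*(Z;\F_p)$. Thus by Theorem~\ref{thm:H*redPX-Fp}, provided $\Sigma^\infty_+ Z$ is a $p$-local Hurewicz spectrum (which holds since $Z$ is connected, so $H_0 = \F_p$ and the Hurewicz map is surjective in the bottom degree), $H_*(\tilde{\mathbb{P}}\Sigma^\infty_+ Z;\F_p)$ is the free commutative graded $\F_p$-algebra on admissible $\bar{\dlQ}^I x_j$ with $\exc(I)+\epsilon_1 > |x_j|$, where $\{x_j\}$ is a basis for $\tilde{H}_*(Z;\F_p)$. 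The next step is to recall that $H_*(\Omega^\infty\Sigma^\infty Z;\F_p)$ has exactly the same description: it is the free commutative graded algebra on admissible Dyer--Lashof monomials applied to a basis of $\tilde{H}_*(Z;\F_p)$, subject to the same excess condition. Then I would observe that the morphism of ring spectra in~\eqref{eq:PZ-QZ} sends the generating class $x_j \in \tilde{H}_*(Z)$ to the corresponding class on the infinite-loop-space side (since on the bottom cell the map is induced by $Z \to \Omega^\infty\Sigma^\infty Z$), and it commutes with Dyer--Lashof operations because it is a map of commutative $S$-algebras (the $\bar{\dlQ}^I$ are natural for such maps, by the discussion in Section~\ref{sec:PowOps+PX}). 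Hence the induced map on $H_*(-;\F_p)$ carries free generators to free generators bijectively, so it is an isomorphism for every prime $p$.

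Finally, the rational case is handled the same way using Theorem~\ref{thm:H*redPX-Q}: $H_*(\tilde{\mathbb{P}}\Sigma^\infty_+ Z;\Q) = \Q\langle \tilde{H}_*(Z;\Q)\rangle$, which matches $H_*(\Omega^\infty\Sigma^\infty Z;\Q)$ (the free graded-commutative algebra on $\tilde{H}_*(Z;\Q)$, by the rational splitting of $\Sigma^\infty\Omega^\infty\Sigma^\infty Z$ as a wedge of smash powers), and the map again sends generators to generators. Since the map of connective spectra induces an isomorphism on $H_*(-;\F_p)$ for all primes $p$ and on $H_*(-;\Q)$, it is an integral homology isomorphism, hence a weak equivalence. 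I expect the main obstacle to be bookkeeping: matching the indexing/excess conventions for Dyer--Lashof monomials on the two sides and checking that the ring map genuinely identifies the two sets of polynomial generators compatibly with all $\bar{\dlQ}^I$ — in particular confirming that no relations are introduced and that the ``reduced'' quotient by the bottom-cell classes on the $\tilde{\mathbb{P}}$ side corresponds precisely to passing from $H_*(\Sigma^\infty_+ Z)$ to the based-space homology, i.e.\ that dividing out $x_0 - 1$ and the $\bar{\dlQ}^I x_0$ exactly produces $H_*(\Omega^\infty\Sigma^\infty Z)$ rather than something larger.
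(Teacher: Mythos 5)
Your proposal is correct and follows exactly the route the paper takes: the paper's (much terser) proof simply says that comparison of the known answers for $H_*(\tilde{\mathbb{P}}\Sigma^\infty Z;\k)$ and $H_*(\Omega^\infty\Sigma^\infty Z;\k)$ for $\k=\Q,\F_p$ gives an isomorphism on $H_*(-;\k)$, hence on $H_*(-;\Z)$, hence a weak equivalence. Your write-up supplies the details the paper omits (including the helpful clarification that one should take $X=\Sigma^\infty_+Z$ with bottom cell the disjoint basepoint, so $X/S^0\simeq\Sigma^\infty Z$, and the compatibility of the map with the Dyer--Lashof generators), but it is the same argument.
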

\begin{proof}
If $\k=\Q$ and $\k=\F_p$ for $p$ a prime, comparison of the
known answers for $H_*(\tilde{\mathbb{P}}\Sigma^\infty Z;\k)$
and $H_*(\Omega^\infty\Sigma^\infty Z;\k)$ given in
Section~\ref{sec:HomologyPX} shows that this morphism induces
an isomorphism $H_*(-;\k)$. It follows that it induces an
isomorphism on $H_*(-;\Z)$, hence it is a weak equivalence.
\end{proof}

\section{The ordinary homology of free commutative \protect$S$-algebras}
\label{sec:HomologyPX}

In this section we recall some results on ordinary homology 
of free commutative $S$-algebras. 

For a commutative ring $\k$, and a graded $\k$-module~$V_*$,
we will write $\k\langle V_*\rangle$ for the free commutative
graded $\k$-algebra on $V_*$. If $V_*$ is connective and
$V_0$ is a cyclic $\k$-module, we set $\tilde{V}_* = V_*/V_0$.

Let $X$ be a cofibrant connective spectrum.
\begin{thm}\label{thm:H*PX-Q}
The rational homology of\/ $\mathbb{P}X$ is given by
\[
H_*(\mathbb{P}X;\Q) = \Q\langle H_*(X;\Q)\rangle.
\]
\end{thm}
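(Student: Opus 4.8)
The plan is to exploit the wedge decomposition $\mathbb{P}X=\bigvee_{j\ge0}X^{(j)}/\Sigma_j$ together with the natural weak equivalences $E\Sigma_j\ltimes_{\Sigma_j}X^{(j)}\xrightarrow{\;\sim\;}X^{(j)}/\Sigma_j$ (valid since $X$ is cofibrant), reducing the computation of $H_*(\mathbb{P}X;\Q)$ as a graded ring to understanding $\bigoplus_{j\ge0}H_*(E\Sigma_j\ltimes_{\Sigma_j}X^{(j)};\Q)$ with its Pontryagin product.

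First I would recall the homotopy-orbit spectral sequence $H_p(\Sigma_j;H_q(Y;\Q))\Rightarrow H_{p+q}(E\Sigma_j\ltimes_{\Sigma_j}Y;\Q)$ and apply it with $Y=X^{(j)}$, so that $H_*(Y;\Q)=H_*(X;\Q)^{\otimes j}$ carries the permutation action of $\Sigma_j$ twisted by the Koszul signs. Since $\Q[\Sigma_j]$ is semisimple, the group homology vanishes in positive degrees, the spectral sequence collapses, and one obtains $H_*(E\Sigma_j\ltimes_{\Sigma_j}X^{(j)};\Q)\cong\bigl(H_*(X;\Q)^{\otimes j}\bigr)_{\Sigma_j}$, the coinvariants. (Equivalently, this is the statement that rationally the extended power agrees with the strict orbit spectrum, via a transfer argument of the kind used for Lemma~\ref{lem:Tr=0}.)

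Next I would identify $\bigoplus_{j\ge0}(V^{\otimes j})_{\Sigma_j}$, for $V=H_*(X;\Q)$, with $\Q\langle V\rangle$. Because $j!$ is invertible in $\Q$, the symmetrisation idempotent identifies the coinvariants $(V^{\otimes j})_{\Sigma_j}$ with the invariants, i.e.\ with the $j$-th graded-symmetric power $\mathrm{Sym}^j_\Q V$, and $\bigoplus_{j\ge0}\mathrm{Sym}^j_\Q V$ is exactly $\Q\langle V\rangle$ as a graded $\Q$-module (polynomial on even-degree generators, exterior on odd-degree generators). It then remains to check that this identification respects multiplication: the ring structure on $\bigvee_j X^{(j)}/\Sigma_j$ is induced by the maps $X^{(i)}/\Sigma_i\wedge X^{(j)}/\Sigma_j\to X^{(i+j)}/\Sigma_{i+j}$ coming from the subgroup inclusions $\Sigma_i\times\Sigma_j\hookrightarrow\Sigma_{i+j}$, and in rational homology these become the evident concatenation maps $(V^{\otimes i})_{\Sigma_i}\otimes(V^{\otimes j})_{\Sigma_j}\to(V^{\otimes(i+j)})_{\Sigma_{i+j}}$, which under the symmetrisation identification are precisely the products of symmetric powers. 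Hence the Pontryagin ring $H_*(\mathbb{P}X;\Q)$ is $\Q\langle H_*(X;\Q)\rangle$.

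The main obstacle I expect is bookkeeping rather than conceptual: carefully tracking the Koszul signs through the chain of identifications (extended-power homology $\rightsquigarrow$ twisted group homology $\rightsquigarrow$ graded-symmetric algebra), and verifying that the multiplicative comparison lands on the genuine free graded-commutative algebra rather than a divided-power algebra — these coincide over $\Q$ but only because of the invertibility of $j!$, so the argument must be organised so that this hypothesis is visibly used. The vanishing of higher rational group homology of $\Sigma_j$ (equivalently the transfer splitting) is the technical heart, but it is standard and already invoked in spirit elsewhere in the paper.
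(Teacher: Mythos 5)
Your argument is correct, but it follows a different route from the paper's. The paper's (sketch) proof proceeds by base change: $H\Q\wedge\mathbb{P}(-)\iso\mathbb{P}_{H\Q}(H\Q\wedge(-))$, so that $H_*(\mathbb{P}X;\Q)=\pi_*\mathbb{P}_{H\Q}(H\Q\wedge X)$; since $H\Q\wedge X$ splits as a wedge of suspensions of $H\Q$ and $\mathbb{P}_{H\Q}$ converts wedges to smash products over $H\Q$, the computation reduces to the free commutative $H\Q$-algebra on a single sphere, where the symmetric powers are immediate. Your proof instead works summand by summand in the decomposition $\mathbb{P}X=\bigvee_j X^{(j)}/\Sigma_j$, replaces each strict quotient by the extended power (legitimate since $X$ is cofibrant, as the paper itself uses in Section~\ref{sec:PowOps+PX}), and invokes the homotopy-orbit spectral sequence plus semisimplicity of $\Q[\Sigma_j]$ to identify each summand with the coinvariants $(H_*(X;\Q)^{\otimes j})_{\Sigma_j}$, then checks multiplicativity via the inclusions $\Sigma_i\times\Sigma_j\hookrightarrow\Sigma_{i+j}$. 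What the paper's approach buys is uniformity: the same formal reduction to spheres sets up the mod~$p$ case (Theorem~\ref{thm:H*PX-Fp}), where one then feeds in the known answer for $\mathbb{P}_{H\F_p}$ of a sphere in terms of Dyer-Lashof monomials. What your approach buys is explicitness: it isolates exactly where rationality enters (vanishing of positive-degree group homology, equivalently invertibility of $j!$ collapsing coinvariants onto symmetric powers rather than divided powers), and it identifies each wedge summand's contribution individually, which is closer in spirit to the transfer arguments the paper uses in Section~\ref{sec:PowOps+PX}. Your caution about Koszul signs and about symmetric versus divided powers is exactly the right thing to watch; neither issue causes a problem over $\Q$.
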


In positive characteristic, the next result is fundamental.
We use the standard convention for Dyer-Lashof monomials
so that in an indexing sequence
\[
I =
(\epsilon_1,i_1,\epsilon_2,i_2,\ldots,\epsilon_\ell,i_\ell),
\]
each $i_r$ is positive, when $p=2$ all the $\epsilon_i$
are zero, while for odd~$p$, $\epsilon_i=0,1$. The length
of $\dlQ^I$ is $\len(\dlQ^I)=\ell$.
\begin{thm}\label{thm:H*PX-Fp}
For~$p$ a prime, $H_*(\mathbb{P}X;\F_p)$ is the free
commutative graded $\F_p$-algebra generated by elements
$\dlQ^Ix_j$, where $x_j$ for $j\in J$ gives a basis
for $H_*(X;\F_p)$ and
$I=(\epsilon_1,i_1,\epsilon_2,\ldots,\epsilon_\ell,i_\ell)$
is admissible with $\exc(I)+\epsilon_1>|x_j|$.
\end{thm}

So for $p=2$, this gives the polynomial ring
\[
H_*(\mathbb{P}X;\F_2) =
\F_2[\dlQ^Ix_j:\text{$j\in J$, $\exc(I)+\epsilon_1>|x_j|$}\;].
\]

Of course these results are very similar to those for the
homology of $\Omega^\infty\Sigma^\infty Z$ for a space~$Z$;
for a convenient overview of the latter, see~\cite{JPM:HomOps}.
The Dyer-Lashof operations are compatible with the notation
used in Section~\ref{sec:PowOps+PX}.

\begin{proof}[Sketch of why \emph{Theorems~\ref{thm:H*PX-Q}}
and~\emph{\ref{thm:H*PX-Fp}} hold]
We learnt some of the following from Mike Mandell, see
also~\cite{MBtaq}*{section~6}.

Let $R$ be a commutative $S$-algebra. The free functor
$\mathbb{P}_R$ for, sends $R$-modules to commutative
$R$-algebras. As it is a left adjoint it preserves
pushouts, so for $R$-modules $X,Y$,
\[
\mathbb{P}_R(X\vee Y) \iso
               \mathbb{P}_R(X)\wedge_R \mathbb{P}_R(Y).
\]
For any commutative $R$-algebra $A$, base change gives
\[
A\wedge_R \mathbb{P}_R(-) = \mathbb{P}_A(A\wedge_R(-)).
\]
If $R=S$ and $H=H\k$ for a field $\k$, then
\[
H\wedge\mathbb{P}(-) = \mathbb{P}_H(H\wedge (-)).
\]
Applying $\pi_*(-)$ gives a functor sending spectra
to commutative graded $\k$-algebras,
\[
X \mapsto H_*(\mathbb{P}(X)) = \pi_*(\mathbb{P}_H(H\wedge(-)),
\]
which preserves pushouts, in particular it sends wedges
to tensor products. For any spectrum $X$, as an $H$-module,
$H\wedge X$ is equivalent to a wedge of suspensions of $H$,
so the calculation of $H_*(\mathbb{P}(X))=H_*(\mathbb{P}(X);\k)$
reduces to that for spheres. For $\k=\Q$ this gives the
rational result.

When $\k=\F_p$, for a sphere $S^n$ the answer is the free
commutative $\k$-algebra on admissible Dyer-Lashof monomials
$\dlQ^I$ applied to an element~$s_n$, \ie, elements of the
form $\dlQ^Is_n=\dlQ^I\.s_n$, where the indexing
sequences~$I$ are admissible and satisfy $\exc(I)>|s_n|=n$.
Although this makes sense for $n\in\Z$ (for the general case
see~\cite{LNM1176}*{chapter~III}), we only require the case
where $n\geq0$.

Using the ideas of Section~\ref{sec:PowOps+PX}, we know
that for $x\in H_*(X)$, the element $\dlQ^Ix$ is the image
under the canonical homomorphism
\[
\xymatrix{
H_*(E\Sigma_{p^\ell}\ltimes_{\Sigma_{p^\ell}}X^{(p^\ell)})\ar[rr]^(.55){\iso}\ar[drr]
                && H_*(X^{(p^\ell)}/\Sigma_{p^\ell})\ar@{^{(}->}[d] \\
                && H_*(\mathbb{P}X)
}
\]
of an element obtained by forming iterated wreath powers
of~$x$ in the homology
\[
H_*(E\Sigma_p\ltimes_{\Sigma_p}(E\Sigma_{p^k}\ltimes_{\Sigma_{p^k}}X^{(p^k)})).
\]
Of course we can view $\bar{\dlQ}^Ix$ as obtained from~$x$
by applying the Dyer-Lashof monomial
\[
\dlQ^I =
\beta^{\epsilon_1}\dlQ^{i_1}\cdots\beta^{\epsilon_\ell}\dlQ^{i_\ell}
\]
which exists as an operation on the homology of any commutative
$S$-algebra.
\end{proof}

We will describe the analogous results for $\tilde{\mathbb{P}}X$
where $X\in S^0/\mathscr{M}_S$ is a cofibrant $S$-module under
$S^0$, i.e., equipped with a cofibration $S^0\to X$. Of course
our results can also be interpreted in the category of modules
over the $p$-local sphere.

Our main computational tool is the K\"unneth spectral sequence,
and we will use its multiplicative properties and compatibility
with the action of the Dyer-Lashof operations, see~\cite{MB-MM:BP-E4}
for some related results on this.

We begin by stating the rational result whose proof we leave
to the reader.
\begin{thm}\label{thm:H*redPX-Q}
The rational homology of\/ $\tilde{\mathbb{P}}X$ is given by
\[
H_*(\tilde{\mathbb{P}}X;\Q) = \Q\langle \tilde{H}_*(X;\Q)\rangle.
\]
\end{thm}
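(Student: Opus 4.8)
The plan is to mimic the ``sketch'' proof of Theorems~\ref{thm:H*PX-Q} and~\ref{thm:H*PX-Fp}, but now exploiting the pushout formula~\eqref{eq:S^0/pushouts} for $\tilde{\mathbb{P}}_R$ in place of the analogous formula for $\mathbb{P}_R$. First I would recall that $\tilde{\mathbb{P}}_R$, being a left adjoint, sends pushouts in $S^0_R/\mathscr{M}_R$ to pushouts in $\mathscr{C}_R$; in particular, for $X,Y$ under $S^0_R$,
\[
\tilde{\mathbb{P}}_R(X\undervee{S^0_R}Y) \iso
      \tilde{\mathbb{P}}_RX \wedge_R \tilde{\mathbb{P}}_RY.
\]
Specialising to $R=S$ and smashing with $H\Q$, base change gives $H\Q\wedge\tilde{\mathbb{P}}(-) = \tilde{\mathbb{P}}_{H\Q}(H\Q\wedge(-))$, so that $X\mapsto H_*(\tilde{\mathbb{P}}X;\Q)$ is a functor from $S^0/\mathscr{M}_S$ to commutative graded $\Q$-algebras which sends the wedge-under-$S^0$ to the tensor product over $\Q$.

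The second step is the reduction to cells. Given a connective cofibrant $X\in S^0_S/\mathscr{M}_S$, I would use the skeletal filtration~\eqref{eq:S^0/skeleta}, or rather the fact that rationally $H\Q\wedge X$ splits as a wedge (under $H\Q = H\Q\wedge S^0$) of the bottom cell $H\Q = H\Q\wedge S^0_S$ together with suspensions $\Sigma^{n}H\Q$, one for each basis element of $\tilde H_n(X;\Q)$ with $n\geq1$ (and possibly further copies in degree $0$ if $H_0(X;\Q)$ is larger than $\Q$, though in the connective ``reduced'' setting the relevant case is $H_0=\Q$ generated by the image of $S^0$, giving $\tilde H_0 = 0$). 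Hence by the multiplicativity just established, $H_*(\tilde{\mathbb{P}}X;\Q)$ is the tensor product over $\Q$ of $H_*(\tilde{\mathbb{P}}S^0;\Q)$ with the $H_*(\tilde{\mathbb{P}}C_n;\Q)$, where $C_n = S^0\undervee{} S^n$ is the bottom cell with a single extra $n$-cell wedged on. Now $\tilde{\mathbb{P}}S^0_S = \tilde R \wedge_{\mathbb{P}S^0_S}\mathbb{P}S^0_S\simeq \tilde R\simeq S$, so $H_*(\tilde{\mathbb{P}}S^0;\Q)=\Q$ contributes nothing; and for the cell $C_n$, Proposition~\ref{prop:Omega-redP} (or the cofibre-sequence computation in its proof, together with $\tilde H_*(S^n) = H_*(S^n)/H_0 = \Q$ in degree $n$) identifies $\tilde{\mathbb{P}}C_n$ up to equivalence with $\mathbb{P}S^n_S$, whose rational homology is $\Q\langle \iota_n\rangle$ by Theorem~\ref{thm:H*PX-Q}. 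Assembling the tensor product over all cells then yields $H_*(\tilde{\mathbb{P}}X;\Q)=\Q\langle\tilde H_*(X;\Q)\rangle$, as claimed.

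The main obstacle, I expect, is bookkeeping around the basepoint in degree $0$: one must be careful that forming the reduced free algebra really does kill exactly the contribution of the bottom $0$-cell $S^0_S$, so that the generators are indexed by $\tilde H_*(X;\Q)$ rather than by all of $H_*(X;\Q)$. This is precisely what Proposition~\ref{prop:Omega-redP} encodes at the level of the universal derivation ($\Omega_R(\tilde{\mathbb{P}}_RX)\iso\tilde{\mathbb{P}}_RX\wedge_R X/S^0_R$), and the clean way to run the argument is to note that the equivalence $\tilde R\xrightarrow{\sim}R$ makes $\tilde{\mathbb{P}}$ on the one-point space trivial, and then proceed cell by cell using~\eqref{eq:S^0/skeleta} and~\eqref{eq:S^0/pushouts}. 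Everything else is the same wedge-to-tensor-product formalism used for $\mathbb{P}X$, so once the $0$-cell issue is handled the rest is routine — which is why the statement invites the reader to supply the details.
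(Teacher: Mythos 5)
Your argument is essentially correct, but note that the paper deliberately gives no proof of this statement (``whose proof we leave to the reader''); the nearest model in the text is the proof of the mod~$p$ companion, Theorem~\ref{thm:H*redPX-Fp}, which proceeds quite differently: there one runs the K\"unneth spectral sequence $\Tor^{H_*(\mathbb{P}S^0)}_{*,*}(H_*(\mathbb{P}X),\k)\Rightarrow H_*(\tilde{\mathbb{P}}X)$ for the pushout over $\mathbb{P}S^0$, observes that $H_*(\mathbb{P}X)$ is free over the image of $H_*(\mathbb{P}S^0)$, and collapses the spectral sequence to the quotient by the bottom-cell generators; the rational case is the same argument with $\Q\langle s_0\rangle=\Q[s_0]$ in place of $H_*(\mathbb{P}S^0;\F_p)$. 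Your route --- base change to $H\Q$, split $H\Q\wedge X$ under $H\Q$ into a bottom cell plus one cell per basis element of $\tilde H_*(X;\Q)$, and convert the coproduct under $S^0$ into a tensor product via~\eqref{eq:S^0/pushouts} --- is a genuinely different and equally valid approach; it trades the algebraic input (freeness of $H_*(\mathbb{P}X)$ over $H_*(\mathbb{P}S^0)$) for a geometric cell decomposition, and it has the advantage of making the cancellation of the bottom cell completely transparent. Two small repairs: the identification $\tilde{\mathbb{P}}(S^0\vee S^n)\simeq\mathbb{P}S^n$ does not follow from Proposition~\ref{prop:Omega-redP} (which computes $\Omega_R$ of the reduced free algebra, not the algebra itself); it follows directly from the definition, since $\mathbb{P}(S^0\vee S^n)\iso\mathbb{P}S^0\wedge\mathbb{P}S^n$ with $\mathbb{P}i$ the inclusion of the first factor, whence $\tilde{\mathbb{P}}(S^0\vee S^n)\iso\tilde S\wedge_{\mathbb{P}S^0}\mathbb{P}S^0\wedge\mathbb{P}S^n\iso\tilde S\wedge\mathbb{P}S^n\simeq\mathbb{P}S^n$. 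And the degree-zero bookkeeping you flag is a genuine hypothesis, not just bookkeeping: for the answer to be $\Q\langle\tilde H_*(X;\Q)\rangle$ with $\tilde H_*=H_*/H_0$ one needs the bottom cell to generate $H_0(X;\Q)$, i.e.\ the same Hurewicz-type hypothesis that appears explicitly in Theorem~\ref{thm:H*redPX-Fp}.
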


The positive characteristic case is of course more interesting.
\begin{thm}\label{thm:H*redPX-Fp}
Let~$p$ be a prime. If $X\in S^0/\mathscr{M}_S$ is cofibrant,
then $H_*(\tilde{\mathbb{P}}X;\F_p)$ is the free commutative
graded $\F_p$-algebra generated by elements $\dlQ^Ix_j$, where
$x_j$ for $j\in J$ gives a basis for
$\tilde{H}_*(X;\F_p)=H_*(X/S^0;\F_p)$ and
$I=(\epsilon_1,i_1,\epsilon_2,\ldots,\epsilon_\ell,i_\ell)$
is admissible with $\exc(I)+\epsilon_1>|x_j|$.
\end{thm}
\begin{proof}
We set $H_*(-) = H_*(-;\F_p)$.

Since the pushout agrees with the smash product over $\mathbb{P}S^0$,
there is a first quadrant K\"unneth spectral sequence with
\[
\mathrm{E}^2_{s,t} =
   \Tor^{H_*(\mathbb{P}S^0)}_{s,t}(H_*(\mathbb{P}X),\F_p)
    \Lra H_{s+t}(\tilde{\mathbb{P}}X).
\]
Here $i'_*\:H_*(\mathbb{P}S^0)\to H_*(\mathbb{P}X)$ embeds
the domain as a subalgebra since $x_0=i_*(s_0)$ generates
$H_0(X)=\F_p$ and
\begin{align*}
i'_*(s_0) &= x_0-1,   \\
i'_*(\dlQ^Is_0) &= \dlQ^I(x_0)-\dlQ^I(1) = \dlQ^I(x_0)
                   \quad \text{if $\len(I)>0$}.
\end{align*}
By the freeness of $H_*(\mathbb{P}X)$, it is a free
$i'_*H_*(\mathbb{P}S^0)$-module, hence
\begin{align*}
\mathrm{E}^2_{*,*} &= \Tor^{H_*(\mathbb{P}S^0)}_{0,*}(H_*(\mathbb{P}X),\F_p) \\
&= H_*(\mathbb{P}X)\otimes_{i'_*H_*(\mathbb{P}S^0)}\F_p \\
&= H_*(\mathbb{P}X)/(x_0-1,\dlQ^Ix_0:\len(I)>0).
\end{align*}
Thus the spectral sequence collapses at the $\mathrm{E}^2$-term
and the result follows.
\end{proof}

Armed with Theorem~\ref{thm:deltaQ^Ix=0}, we have
\begin{thm}\label{thm:univder-Q^I}
Let $p$ be a prime and let $X$ be a $p$-local connective
cofibrant $S$-module. Then the universal derivation
$\delta_{(\tilde{\mathbb{P}}X,S)}$ induces the derivation
\[
\Delta_{(\tilde{\mathbb{P}}X,S)}\: H_*(\tilde{\mathbb{P}}X;\F_p)
\to \TAQ_*(\tilde{\mathbb{P}}X,S;H\F_p) \iso H_*(X/S^0;\F_p)
\]
which acts on the elements $\dlQ^Ix$ with $x\in H_*(X;\F_p)$
by the rule
\[
\Delta(\dlQ^Ix) =
\begin{cases}
x & \text{\rm if $\len(I)=0$}, \\
0 & \text{\rm if $\len(I)>0$}.
\end{cases}
\]
\end{thm}

We remark that the universal derivation is not 
an $\mathcal{A}(p)_*$-comodule homomorphism. 
The interaction between Dyer-Lashof operations 
and the $\mathcal{A}(p)_*$-coaction is described 
in~\cite{Nishida}.

\section{Some calculations}\label{sec:Calculations}

Equipped with our earlier results, we revisit some 
of the calculations of~\cite{BGRtaq}*{section~5}.

First we consider the $\TAQ$-Hurewicz homomorphism 
for the \Einfty  Thom spectrum $MU$. By work of
Basterra and Mandell, in the homotopy category
$\bar{h}\mathscr{M}_{MU}$,
\[
\Omega_S(MU) \iso MU\wedge \Sigma^2 ku.
\]
At the prime~$p=2$, the Hurewicz homomorphism factors through
$H_*(MU;\F_2)$
\[
\xymatrix{
\pi_*(MU) \ar[r]\ar@/^19pt/[rrr]^\theta & H_*(MU;\F_2) \ar[r]_(.45){\theta'}
                         & \TAQ_*(MU,S;\H\F_2) \ar[r]_(.56){\iso}
                         & H_*(\Sigma^2 ku;\F_2)
}
\]
where
\[
\xymatrix{
H_*(MU;\F_2)\ar[r]^{\theta'}\ar@{=}[d] & H_{*-2}(\Sigma^2 ku;\F_2)\ar@{=}[d] \\
\F_2[b_r:r\geq1] & \Sigma^2\F_2[\zeta_1^2,\zeta_2^2,\zeta_3,\ldots]
}
\]
is a derivation and
\[
\theta'(b_r) =
\begin{cases}
\Sigma^2\xi_s^2 & \text{if $r = 2^s$}, \\
\;\;0 & \text{otherwise}.
\end{cases}
\]
Here $\zeta_s=\chi(\xi_s)$ is the conjugate of the Milnor generator
in $\F_2[\zeta_1^2,\zeta_2^2,\zeta_3,\ldots]\subseteq\mathcal{A}(2)_*$.
This tells us that the elements $b_{2^s}$ are not decomposable in
terms of the Dyer-Lashof action, and recovers part of Kochman's
result Theorem~\ref{thm:DL-MU-QH}.

For an odd prime $p$, the $\TAQ$-Hurewicz homomorphism decomposes
into $(p-1)$ pieces corresponding to the Adams splitting of
$p$-local connective $K$-theory, giving
\[
\Sigma^2ku_{(p)} \sim \bigvee_{1\leq r\leq p-1}\Sigma^{2r}\ell.
\]
This gives an equivalent homomorphism
\[
\theta'\:H_*(MU;\F_p) \to
     \bigoplus_{1\leq r\leq p-1}H_*(\Sigma^{2r}\ell;\F_p).
\]
Here
\[
H_*(\ell;\F_p) = \F_p[\zeta_i:i\geq1]\otimes\lambda_{\F_p}(\bar{\tau}_j:j\geq2)
   \subseteq\mathcal{A}(p)_*,
\]
and the component corresponding to $\Sigma^{2r}\ell$ is
determined in terms of generating functions by
\[
\sum_{i\geq0}b_it^i \mapsto t^r\biggl(\sum_{j\geq0}\xi_jt^{p^j-1}\biggr)^r.
\]
It follows that $b_k\mapsto0$ unless $k\equiv r\bmod{(p-1)}$.
Write $k=np^e$ with $p\nmid n$, so $n\equiv r\bmod{(p-1)}$.
Now set
\[
n-r = (p-1)(s_0+s_1p+\cdots+s_dp^d)
\]
with $0\leq s_i\leq p-1$ and $s_d\neq0$. Then we obtain
\[
b_{(s(p-1)+r)p^e} \mapsto
\binom{r}{r-s_0,s_0-s_1,\ldots,s_{d-1}-s_d,s_d}
\xi_e^{r-s_0}\xi_{e+1}^{s_0-s_1} \cdots
\xi_{e+d-1}^{s_{d-2}-s_{d-1}}\xi_{e+d}^{s_{d-1}-s_d}\xi_{e+d+1}^{s_d}.
\]
Notice that this can only give a non-zero answer if the
following inequalities are satisfies:
\[
1\leq s_d\leq s_{d-2}\leq\cdots \leq s_1\leq s_0\leq r.
\]
In these cases $b_{(s(p-1)+r)p^e}$ must be Dyer-Lashof
indecomposable, and so we again recover Kochman's odd
primary result of Theorem~\ref{thm:DL-MU-QH}.

Here is another example, the reader is invited to compare
it with that of $MU_{(2)}$ and $MSp_{(2)}$ in~\cite{BGRtaq}*{proposition~5.1}.
\begin{prop}\label{prop:MSU2}
The $2$-local commutative $S$-algebra $MSU_{(2)}$ is not
minimal atomic.
\end{prop}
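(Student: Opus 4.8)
The plan follows the pattern of Proposition~\ref{prop:P-minatom} and of the treatment of $MU$ and $MSp$ in \cite{BGRtaq}*{proposition~5.1}. Suppose, for a contradiction, that $MSU_{(2)}$ is minimal atomic as a commutative $S$-algebra. Then \cite{BGRtaq}*{theorem~3.3} forces the $\TAQ$-Hurewicz homomorphism
\[
\theta\: \pi_n(MSU_{(2)}) \lra \TAQ_n(MSU_{(2)},S;H\F_2)
\]
to vanish for every $n>0$; I will contradict this by showing that $\theta$ is onto $\F_2$ in degree~$4$.

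The first ingredient is the analogue for $MSU$ of the Basterra--Mandell formula $\Omega_S(MU)\iso MU\wedge\Sigma^2ku$ recorded above. Since $BU$ is, as an infinite loop space, $\Omega^\infty\Sigma^2ku$, its $3$-connected cover $BSU$ is $\Omega^\infty$ of a connective spectrum equivalent (by Bott periodicity) to $\Sigma^4ku$; and $MSU$ is the Thom spectrum of the restriction to $BSU$ of the infinite loop map $BU\lra BGL_1(S)$. The Basterra--Mandell argument then gives, in $\bar h\mathscr M_{MSU}$,
\[
\Omega_S(MSU)\homeq MSU\wedge\Sigma^4ku,
\]
so that
\[
\TAQ_*(MSU,S;H\F_2)\iso H_*(\Sigma^4ku;\F_2),
\]
which vanishes in positive degrees below $4$ and is $\F_2$ in degree~$4$, generated by the bottom cell. (Should one wish to avoid this identification: $MSU$ is $3$-connected with $\pi_4(MSU)=\Z$, so the augmentation ideal $\mathrm{fib}(MSU\lra S)$ has its bottom cell in degree~$4$, and a stabilisation argument in the sense of \cite{MBtaq} already yields $\TAQ_4(MSU,S;H\F_2)\iso\F_2$ since $4$ lies in the stable range.)

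Because $BSU$ is $3$-connected, $MSU$ is $3$-connected with $\pi_4(MSU)=H_4(MSU;\Z)=\Z$ by the Hurewicz theorem, so the reduction $\pi_4(MSU_{(2)})=\Z_{(2)}\lra H_4(MSU;\F_2)=\F_2$ is onto. It then remains to check that $\theta'\: H_4(MSU;\F_2)\lra\TAQ_4(MSU,S;H\F_2)$ is an isomorphism --- this is the bottom-degree case of the $\TAQ$-Hurewicz theorem underpinning \cite{BGRtaq}, reflecting the fact that the universal derivation restricts to the identity on the generating $4$-cell, just as $\Omega_R(\mathbb{P}_RX)\iso\mathbb{P}_RX\wedge_RX$ splits off the generators. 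Composing, $\theta$ is onto $\F_2$ in degree~$4$, which is the required contradiction.

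The step that genuinely needs care is the first one: assembling $\Omega_S(MSU)\homeq MSU\wedge\Sigma^4ku$ (or, at a minimum, the two facts $\TAQ_4(MSU,S;H\F_2)\iso\F_2$ and surjectivity of the bottom $\TAQ$-Hurewicz map). One is tempted to argue instead via the $E_\infty$ ring map $MSU\lra MU$ and naturality of $\theta$, invoking the detection $\theta'(b_{2^s})=\Sigma^2\xi_s^2$ in $\TAQ_*(MU,S;H\F_2)$ computed above; but that would demand an $SU$-bordism class whose image in $\pi_*(MU_{(2)})$ is Dyer-Lashof indecomposable, and since $s_2[K3]=-48$ is divisible by $16$ the degree-$4$ generator of $\pi_*(MSU)$ is already $2$-locally decomposable in $\pi_*(MU)$. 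One would then be pushed into higher degrees, where controlling the relevant $SU$-characteristic numbers becomes the real obstacle, so the direct route is preferable.
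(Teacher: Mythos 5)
Your overall strategy (contradict the vanishing of the $\TAQ$-Hurewicz homomorphism forced by \cite{BGRtaq}*{theorem~3.3}, using $\Omega_S(MSU)\homeq MSU\wedge\Sigma^4ku$) is the right one, and your identification of $\TAQ_4(MSU,S;H\F_2)\iso\F_2$ with $\theta'$ an isomorphism in degree $4$ agrees with the paper. But the argument collapses at the step ``the reduction $\pi_4(MSU_{(2)})\lra H_4(MSU;\F_2)$ is onto.'' First, $MSU$ is \emph{not} $3$-connected: it is a connective Thom spectrum with $\pi_0=\Z$, $\pi_1=\pi_2=\Z/2$ (the classes $\eta,\eta^2$ coming from the unit), so the Hurewicz theorem does not apply in degree $4$. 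What is true is that the bottom cell above the Thom class sits in degree $4$, but the integral Hurewicz map $\pi_4(MSU)=\Omega^{SU}_4\lra H_4(MSU;\Z)=\Z$ is multiplication by $\pm 24$ (the generator is the K3 surface and the pairing with $c_2U$ gives $c_2[K3]=24$; more structurally, $\Sq^4U=w_4U=c_2U\neq0$ in $H^*(MSU;\F_2)$, so $\Ext^{0,4}_{\mathcal{A}}$ vanishes and no class of $H_4(MSU;\F_2)$ is spherical). Since $\theta$ factors through the ordinary mod~$2$ Hurewicz homomorphism, $\theta$ is \emph{zero} on $\pi_4$, and your contradiction evaporates. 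Ironically, your closing remark about $s_2[K3]=-48$ being highly divisible is exactly the phenomenon that defeats your own degree-$4$ argument.

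This gap is precisely why the paper's proof cannot stop after computing $\theta'$ on $H_*(MSU;\F_2)$ (it explicitly says ``this alone does not give us the result''). Knowing that $\theta'$ is nonzero on the generators $a'_{2^{n+1}}$ and $a'_{2^m+2^n}$ only helps if some such generator is actually a classical Hurewicz image of a homotopy class. The paper supplies this input from Pengelley's Adams spectral sequence analysis of $MSU_{(2)}$: the generators $y'_{2^{m+3}+2^{n+3}}$ with $n>m\geq0$ (lowest degree $24$) are permanent cycles in filtration $0$, hence spherical, and their $\theta'$-images $\Sigma^4\xi_{m+2}^2\xi_{n+2}^2$ are nonzero. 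To repair your proof you would need to replace the degree-$4$ claim by this (or an equivalent) detection of a positive-degree homotopy class with nonzero image in $H_*(MSU;\F_2)$ hitting a Dyer--Lashof-indecomposable generator of the form $a'_{2^m+2^n}$; no purely formal connectivity argument will produce one.
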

\begin{proof}
We recall that $H_*(MSU;\F_2)$ is a polynomial algebra with
a generator in each even degree greater than~$2$. There are
many explicit generating families known, for example
see~\cites{AB:HomGenBSO+BSU,AB:HWdecomp}. In fact, $H_*(MSU;\F_2)$
can be identified as a subalgebra of $H_*(MU;\F_2)$, and then
there are polynomial generators $a_n\in H_{2n}(MU;\F_2)$ so
that
\[
H_*(MSU;\F_2) =
\F_2[a_{2^s}^2:s\geq0]\otimes\F_2[a_{2^sk}:\text{$s\geq0$, $k>1$ odd}]
                                         \subseteq H_*(MU;\F_2).
\]
We will write $a'_n$ the generator in degree $2n$ where
$n\geq2$, for our purposes it is not important which
choice we make here.

By \cite{SOK:DLops}*{theorem~19(a)}, the Dyer-Lashof indecomposables
in $H_*(MSU;\F_2)$ are the algebra generators appearing in degrees
of the form $2^m+2^n$ where $m,n\geq0$; this includes the case
$2^s=2^{s-1}+2^{s-1}$ where $s\geq1$.

Since there is a weak equivalence of infinite loop spaces
$BSU\sim\Omega^\infty\Sigma^4ku$, by~\cite{MB-MM:TAQ},
\[
\Omega_S(MSU) \iso MSU\wedge \Sigma^4ku.
\]
Therefore the $\TAQ$-Hurewicz homomorphism factors as
\[
\xymatrix{
\pi_*(MSU) \ar[r]\ar@/^19pt/[rrr]^{\theta}
       & H_*(MSU;\F_2)\ar[r]_(.45){\theta'}
       & \TAQ_*(MSU,S;H\F_2)\ar[r]_(.6){\iso}
       & H_*(\Sigma^4ku;\F_2)
}
\]
and in fact using the geometrically defined generators
described in~\cite{AB:HomGenBSO+BSU} it can be shown
that
\[
\im\theta' =\F_2\{\Sigma^4\xi_m^2\xi_n^2:m,n\geq1\}.
\]
Here $\theta'$ has the effect
\[
a'_{2^{n+1}} \mapsto \Sigma^4\xi_n^4\quad (n\geq0),
\qquad
a'_{2^m+2^n} \mapsto \Sigma^4\xi_m^2\xi_n^2\quad (n>m\geq0).
\]
However, this alone does not give us the result. We would
like to use~\cite{BGRtaq}*{theorems~3.2,3.4}, so we must show
that
\[
\theta\:\pi_n(MSU)\to\TAQ_n(MSU,S;H\F_2)
\]
is non-trivial for some $n>0$. For this we will use work of
Pengelley~\cite{DJP:MSU} on the Adams spectral sequence for
$\pi_n(MSU_{(2)})$. In~\cite{DJP:MSU}*{theorem~2.6} it is
shown that there are polynomial generators
$y'_{8k}\in H_{8k}(MSU;\F_2)$ for which the Adams differential
$d_2$ satisfies
\[
d_2y'_{8k} =
\begin{cases}
hq'_{s-1}\neq0 & \text{if $k=2^s$}, \\
0 & \text{if $k$ is not a power of $2$},
\end{cases}
\]
and furthermore all trivial higher differentials in the spectral
sequence are trivial. For our purposes what matters here is that
each generator $y'_{2^{m+3} + 2^{n+3}}$ where $n>m\geq0$ is in
the image of the classical Hurewicz homomorphism and under the
$\TAQ$-homomorphism it maps to $\Sigma^4\xi_{m+2}\xi_{n+2}\neq0$.
This means that $MSU_{(2)}$ cannot be minimal atomic.
\end{proof}

If the summand $BoP$ of $MSU_{(2)}$ were to have a commutative
$S$-algebra structure, then Pengelley's results would imply that
the $\bmod{\;2}$ $\TAQ$-Hurewicz homomorphism was trivial, hence
$BoP$ would be minimal atomic. However, this depends on the
observation that the classical $\bmod{\;2}$ Hurewicz homomorphism
is trivial so we already know it is minimal atomic as a
spectrum~\cite{AJB-JPM} and hence it would be as a commutative
$S$-algebra. So the use of $\TAQ$ would not be really necessary.

Here are some more examples.
\begin{examp}\label{examp:HFp}
Let $p$ be a prime and set $H=H\F_p$, $H_*(-)=H_*(-;\F_p)$. Then
$\TAQ$-Hurewicz homomorphism
\[
\theta'\:H_*(H)\to \TAQ_*(H,S;H)
\]
has the following effect on
\[
H_*(H) = \mathcal{A}(p)_* =
\begin{cases}
\F_p[\xi_i:i\geq1]\otimes\Lambda(\tau_j:j\geq0) & \text{if $p$ is odd}, \\
\F_2[\xi_i:i\geq1] & \text{if $p=2$}.
\end{cases}
\]
When $p$ is odd,
\[
\theta'(\tau_0) \neq 0,
\quad
\theta'(\tau_i) = \theta'(\xi_i) = 0 \quad (i\geq1).
\]
When $p=2$,
\[
\theta'(\xi_1) \neq0,
\quad
\theta'(\xi_i) = 0 \quad (i\geq2).
\]

The vanishing results follows from Steinberger's calculations
of Dyer-Lashof operations in~\cite{LNM1176}*{chapter~III, theorem~2.3}.
The non-triviality results use the fact that the unit $S\to H$
is $0$-connected, hence by Basterra~\cite{MBtaq}*{lemma~8.2},
$\Omega_S(H)$ is $0$-connected, see also~\cite{BGRtaq}*{corollary~1.3}.
\end{examp}

Next we will consider the case of $MO$. The infinite loop
space $BO$ has Thom spectrum $MO$ which admits the structure
of an \Einfty  ring spectrum or equivalently of a commutative
$S$-algebra. By Thom's theorem, this is known to split as a
wedge of suspensions of $H=H\F_2$ even as a ring spectrum
\[
MO \sim \bigvee_\alpha\Sigma^\alpha H.
\]
But as we will see, no such splitting can happen in
$\bar{h}\mathscr{C}_S$ because of obstructions lying in
$\TAQ$. Here the underlying infinite loop space is $BO$ and
the associated spectrum is~$ko\<1\>$, the $0$-connected cover
of~$ko$. In the above splitting, the generalized Eilenberg-Mac~Lane
ring spectrum on the right hand side realises the graded
polynomial ring
\begin{equation}\label{eqn:pi*MO}
MO_* = \pi_*(MO)
 = \F_2[z_n : \text{$n\geq 1$ is not of the form $2^s-1$}],
\end{equation}
where $z_n$ has degree $n$. For more on such ring spectra,
see~\cite{JMB:GEM}. Let $\underline{h}\:\pi_n(MO) \to H_n(MO)$
denote the usual mod~$2$ homology Hurewicz homomorphism.
By Thom's theorem, $\underline{h}$ is a monomorphism and
for the polynomial generators $z_n$ of~\eqref{eqn:pi*MO},
the Hurewicz images $\underline{h}(z_n)$ form part of a
set of polynomial generators for $H_*(MO)$ which has one
generator in each positive degree.

By a result of Basterra and Mandell~\cite{MB-MM:TAQ},
\[
\Omega_S(MO) = MO \wedge ko\<1\>,
\]
where $ko\<1\>$ is the $0$-connected cover of $ko$, defined
by the cofibre sequence of $ko$-modules
\[
ko\<1\> \to ko \to H\Z \to \Sigma ko\<1\>.
\]
On applying~$\bmod{\;2}$ homology $H_*(-)$ we obtain a short
exact sequence
\[
0\ra H_*(\Sigma^{-1} ko) \to H_*(\Sigma^{-1} H\Z) \to H_*(ko\<1\>) \ra0
\]
from which we deduce that as an $H_*ko$-module,
\begin{equation}\label{eqn:H*ko<1>}
H_*(ko\<1\>) =
H_*(ko)\{\Sigma^{-1}\zeta_1^2,\Sigma^{-1}\zeta_2,\Sigma^{-1}\zeta_1^2\zeta_2\},
\end{equation}
\ie, the free $H_*(ko)$-module on the generators
$\Sigma^{-1}\zeta_1^2,\Sigma^{-1}\zeta_2,\Sigma^{-1}\zeta_1^2\zeta_2$
which have degrees~$1,2,4$ respectively.

We will make use of the $\TAQ$-Hurewicz homomorphism
\[
\theta\: \pi_n(MO) \to \TAQ_n(MO,S;\F_2) = H_n(ko\<1\>),
\]
and so we need to understand the mod~$2$ homology $H_*(ko\<1\>)$.
In the dual Steenrod algebra
\[
\mathcal{A}(2)_* = H_*(H) = \F_2[\xi_r:r\geq1] = \F_2[\zeta_r:r\geq1],
\]
each generator $\xi_r\in\mathcal{A}(2)_{2^r-1}$ is in the image
of the natural map
\[
H_{2^r}(\RP^\infty) \to H_{2^r}(\Sigma H) = \mathcal{A}(2)_{2^r-1},
\]
and $\zeta_r=\chi(\xi_r)$, the Hopf-algebra conjugate of $\xi_r$.

Now since $\pi_1(ko\<1\>)=\F_2$, there is a canonical non-trivial
homotopy class $\psi\:ko\<1\>\to\Sigma H$ inducing an isomorphism
on $\pi_1(-)$. The horizontal composition in the diagram
\[
\xymatrix{
H\Z \ar[r]\ar@{..>}@/_0.8pc/[dr]_{\text{reduction $\bmod{\;2}$\;}}
          & \Sigma ko\<1\> \ar[r]^{\ph{a}\psi} & \Sigma^2H \\
& H\ar@{..>}@/_0.8pc/[ur]_{\Sq^2}&
}
\]
factors as shown. In order to calculate the effect of the
$H_*(ko)$-module homomorphism
\[
\psi_*\:H_*(ko\<1\>)\to H_{*-1}(H),
\]
we first note that for $r=1,2$, the composition
\[
ko\to H \xrightarrow{\Sq^r} \Sigma^2H
\]
is trivial, hence it induces the trivial map on $H_*(ko)$.
Using the Cartan formula for $\Sq^2_*$, for any element
$w\in H_*(ko)$ we obtain
\begin{equation}\label{eq:H*ko<1>}
\psi_*(w\Sigma^{-1}\zeta_1^2) = w, \quad
\psi_*(w\Sigma^{-1}\zeta_2) = w\zeta_1 = w\xi_1, \quad
\psi_*(w\Sigma^{-1}\zeta_1^2\zeta_2) = w(\zeta_2+\zeta_1^3) = w\xi_2.
\end{equation}
In particular it follows that $\psi_*\:H_*(ko\<1\>)\to H_{*-1}(H)$
is a monomorphism. We also note that the factorisation
of $\eta\:\Sigma ko\to ko$ through a $ko$-module map
$\tilde\eta\:\Sigma ko\to ko\<1\>$ induces
\[
\tilde\eta_*\:H_*(\Sigma ko) \to H_*(ko\<1\>);
\quad
\tilde\eta_*(w) = w\Sigma^{-1}\zeta_1^2.
\]

\begin{prop}\label{prop:MO-theta}
For any choice of generators $z_n$ in~\eqref{eqn:pi*MO},
the\/ $\TAQ$-Hurewicz homomorphism
$\theta\:\pi_*(MO)\to H_*(ko\<1\>)$ satisfies
\[
\theta(z_n) =
\begin{cases}
\ph{abc}0 & \text{\rm if $n\neq2^s$}, \\
\Sigma^{-1}\zeta_2 & \text{\rm if $n=2$}, \\
\Sigma^{-1}\zeta_1^2\zeta_2 & \text{\rm if $n=4$}, \\
\Sigma^{-1}\zeta_1^2\xi_s & \text{\rm if $n=2^s$ with $s\geq3$}. \\
\end{cases}
\]
Hence $MO$ is not a minimal atomic $2$-complete commutative
$S$-algebra.
\end{prop}
\begin{proof}
Choose polynomial generators $a_n\in H_n(MO)$ so that
when~$n+1$ is not a power of~$2$,
\[
\underline{h}(z_n) = a_n.
\]
Note that Kochman's results in~\cite{SOK:DLops} give the
action of the Dyer-Lashof operations on $H_*(BO)$ and the
Dyer-Lashof indecomposables are spanned by the polynomial
generators $a_{2^s}$ for $s\geq0$. Thus we should only
expect $\theta(z_n)$ to be non-zero when $n=2^s$ for some
$s\geq0$.

The calculation of $\psi_*\o\theta$ require similar methods
to those used for $MU$ in~\cite{BGRtaq}*{section~3}. The
crucial point is the determination of the homomorphism
\[
H_*(\RPi) \to H_*(BO) \to H_*(ko\langle1\rangle)
\]
induced by the natural inclusion $\RPi\to BO$ and the
evaluation
\[
\Sigma^\infty BO =
  \Sigma^\infty\Omega^\infty\Sigma^\infty ko\to ko.
\]
Composing with  $\psi$ and applying homology we obtain
\[
H_*(\RPi) \to H_*(ko\langle1\rangle)
\xrightarrow{\;\psi_*\;}
     H_*(\Sigma H) = \mathcal{A}(2)_{*-1},
\]
where $\psi_*$ is monic. Since $H^1(\RPi)=\F_2$, our
composition is the standard one which maps the generator
$\gamma_n\in H_n(\RPi)$ according to the rule
\[
\gamma_n \mapsto
\begin{cases}
\xi_s & \text{if $n=2^s-1$}, \\
0 &\text{otherwise}.
\end{cases}
\]
Using~\eqref{eq:H*ko<1>} we see that $\theta'(a_{2^s})$
has the form claimed.

The statement about $MO$ not being minimal atomic follows
from Thom's result since by definition, for each $s\geq1$,
$a_{2^s}$ is the Hurewicz image of a homotopy element.
\end{proof}

For completeness, we mention the following result which
appeared in the unpublished preprint of \Kriz~\cite{IK:BP},
unpublished work of Basterra and Mandell, and
Lazarev~\cite{AL:Glasgow}.
\begin{thm}\label{thm:Kriz-HF2}
There is an isomorphism
\[
\TAQ^*(H\F_2,S;H\F_2) \iso
\F_2\{\Sigma\SQ^I:
  \text{\rm $I=(i_1,\ldots,i_t)$ admissible, $i_t\geq4$}\}.
\]
\end{thm}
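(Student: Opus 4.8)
The plan is to compute $\TAQ_*(H\F_2,S;H\F_2)=\pi_*\Omega_S(H\F_2)$ and then dualise. Since the unit $S\lra H\F_2$ is $0$-connected, $\Omega_S(H\F_2)$ is $0$-connected (\cf{} Example~\ref{examp:HFp}), and the cell structure below exhibits it as a finite type $H\F_2$-module, so $\TAQ^*(H\F_2,S;H\F_2)$ is the $\F_2$-linear dual of $\TAQ_*(H\F_2,S;H\F_2)$. Working $2$-completely, I would build $H\F_2$ as a CW commutative $S$-algebra over $S$ by successively attaching free cells $\mathbb{P}(D^{n+1})$ along $\mathbb{P}(S^n)$, beginning with the single $1$-cell that kills $2\in\pi_0(S)$. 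By the cellular interpretation of $\TAQ$ from~\cite{BGRtaq}, this filtration yields a cellular chain complex computing $\TAQ_*(H\F_2,S;H\F_2)$ whose underlying graded $\F_2$-module is free on the attached cells, placed in the dimensions of the discs $D^{n+1}$.

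The first step is to show that the cellular differential vanishes, so that $\TAQ_*(H\F_2,S;H\F_2)$ is free on the cells of a minimal CW structure. Section~\ref{sec:PX} is decisive here: the attaching map of each higher cell is, after one suspension, assembled from transfers and iterated Dyer-Lashof operations on cells of lower dimension, by~\eqref{eq:delta-cpt} and the double coset formula~\eqref{eq:DoubCoset}, and Theorem~\ref{thm:deltaQ^Ix=0-gen} shows that the universal derivation, hence the cellular boundary, annihilates every Dyer-Lashof decomposable class $\bar{\dlQ}^Ia$ with $\len(I)>0$. A minimality argument along the lines of~\cites{AJB-JPM,BGRtaq} then forces the surviving boundary components to be divisible by~$2$, hence zero mod~$2$, and the complex collapses. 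Dually, $\TAQ^*(-;H\F_2)$ carries Dyer-Lashof-type cohomology operations $\SQ^i$ and the outcome is that $\TAQ^*(H\F_2,S;H\F_2)$ is the free module over these on the bottom class $\iota\in\TAQ^1$, subject to the excess restrictions below.

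The second step is to enumerate the cells. Proceeding inductively, at each stage the free commutative $S$-algebra on the cells built so far has mod~$2$ homology the free commutative algebra on the Dyer-Lashof operations of the cell classes, by Theorem~\ref{thm:H*PX-Fp}; comparing this degree by degree with $H_*(H\F_2;\F_2)=\mathcal{A}(2)_*=\F_2[\xi_1,\xi_2,\dots]$ reads off exactly which new polynomial generators have been overproduced and so must be coned off by a further cell. Using Steinberger's formulas for the Dyer-Lashof action on $H_*(H\F_2;\F_2)$ (\cite{LNM1176}*{chapter~III}) one identifies $\xi_2$, and inductively each higher algebra generator of $\mathcal{A}(2)_*$, with an iterated Dyer-Lashof operation on the bottom class modulo decomposables; consequently $\bar{\dlQ}^0$, $\bar{\dlQ}^1$ and $\bar{\dlQ}^2$ on the bottom class contribute no new cell (they are, respectively, zero, a square, and the generator $\xi_2$), while the operations that must be coned off are precisely those indexed by admissible sequences $I=(i_1,\dots,i_t)$ with $i_t\geq4$. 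Writing $\Sigma\SQ^I$ for the corresponding basis element of $\TAQ^*$ (the empty sequence giving the degree~$1$ class $\iota$ dual to the bottom cell) and passing to the dual then yields the stated formula.

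The hard part is the second step, specifically the verification that the minimal cell structure has exactly one cell for each admissible $\SQ^I$ with $i_t\geq4$ and none besides. This comes down to a precise determination of which iterated Dyer-Lashof operations on the bottom class become decomposable in $\mathcal{A}(2)_*$ and which persist as algebra generators there, and the induction is delicate because every cone cell reintroduces a fresh family of Dyer-Lashof elements to be analysed. The cleanest way to control it is to import the explicit small model of the cotangent complex $\Omega_S(H\F_2)$ used by \Kriz~\cite{IK:BP}, by Basterra and Mandell, and by Lazarev~\cite{AL:Glasgow}; equivalently one runs a bar-type spectral sequence of the kind to be studied in the sequel (after~\cite{MBtaq}*{section~5} and~\cite{HRM:SS}), with $E^2$ the Andr\'e-Quillen homology of $\mathcal{A}(2)_*$ viewed as an allowable algebra over the Dyer-Lashof algebra, and checks that it collapses, the vanishing line $i_t\geq4$ being exactly the relevant excess condition.
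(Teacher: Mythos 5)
The first thing to note is that the paper contains no proof of this theorem: it is stated as a known result and attributed to \Kriz's unpublished preprint~\cite{IK:BP}, to unpublished work of Basterra and Mandell, and to Lazarev~\cite{AL:Glasgow}. So there is no argument in the text to compare yours against step by step. Judged on its own terms, your proposal is a sensible strategy outline rather than a proof. Your first step --- that a minimal CW commutative $S$-algebra model of $H\F_2$ has vanishing mod~$2$ cellular differential, via the transfer analysis of Section~\ref{sec:PX} and Theorem~\ref{thm:deltaQ^Ix=0-gen} --- is in the spirit of the paper's machinery and plausibly correct, though the clause ``a minimality argument then forces the surviving boundary components to be divisible by~$2$'' is itself only a gesture at adapting the arguments of~\cite{AJB-JPM} from $\mathscr{M}_S$ to $\mathscr{C}_S$.

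The genuine gap is the cell count, and you identify it yourself. The entire content of the theorem is the excess condition $i_t\geq4$, i.e.\ the precise degree-by-degree determination of which new polynomial generators are ``overproduced'' at each stage of the tower and must be coned off. This is not simply a matter of reading off from Steinberger's formulas which $\dlQ^I\.\xi_1$ become decomposable in $\mathcal{A}(2)_*$: since $\TAQ_*(H\F_2,S;H\F_2)$ is nonzero in infinitely many degrees while the Dyer-Lashof indecomposables of $\mathcal{A}(2)_*$ are concentrated in degree~$1$ (cf.\ Example~\ref{examp:HFp}), the answer is a \emph{derived} indecomposables computation, and each new cell reintroduces a fresh family of operations whose fate must be tracked. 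Your proposed resolutions are either circular --- ``import the explicit small model'' from exactly the sources \cite{IK:BP} and \cite{AL:Glasgow} to which the theorem is being credited --- or unexecuted: the Miller/Basterra-type spectral sequence (\cite{MBtaq}*{section~5}, \cite{HRM:SS}) is named, but its $\mathrm{E}^2$-term is not computed and its collapse is not established. Until one of these computations is actually carried out, the identification of the basis with the admissible $\Sigma\SQ^I$ satisfying $i_t\geq4$ is assumed rather than proved.
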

Here the symbols $\SQ^I$ behave like the analogous symbols
$\Sq^I$ in the Steenrod algebra $\mathcal{A}(2)^*$, and we
regard the empty sequence as admissible. However, the right
hand side should not be regarded as a module over the Steenrod
algebra $\mathcal{A}(2)^*$, and this is merely an isomorphism
of vector spaces. Here the suspension $\Sigma(-)$ indicates
a degree shift of~$+1$. There is a duality between
$\TAQ^*(H\F_2,S;H\F_2)$ and $\TAQ_*(H\F_2,S;H\F_2)$, i.e.,
\[
\TAQ^n(H\F_2,S;H\F_2) \iso \Hom_{\F_2}(\TAQ_n(H\F_2,S;H\F_2),\F_2).
\]
%%%In particular we see that
%%%\[
%%%\dim_{\F_2}\TAQ_5(H\F_2,S;H\F_2) = \dim_{\F_2}\TAQ^5(H\F_2,S;H\F_2) = 1,
%%%\]
%%%where $\TAQ^5(H\F_2,S;H\F_2)$ is spanned by $\Sigma\SQ^4$.

We end with a result that is essentially a generalisation
of~\cite{Hu-Kriz-May}*{proposition~2.11}; several examples of this
kind were given in Helen Gilmour's thesis~\cite{HG:PhD}.
%%%However here we use computations in $\TAQ$ rather than directly
%%%using Dyer-Lashof operations; the precise connections only
%%%become clear in the setting of the Miller-type spectral sequence
%%%which we will consider in a planned part~II of this work.
In the planned part~II of this work, we will describe computations
in the setting of a Miller-type spectral sequence for computing
$\TAQ$ which also lead to such results.
\begin{prop}\label{prop:H->MO}
There is no morphism of commutative $S$-algebras $H\F_2\to MO$.
\end{prop}
\begin{proof}
Once again we set $H=H\F_2$.
%%%
%%%By~\eqref{eqn:H*ko<1>},
%%%\[
%%%\TAQ_*(MO,S;H) =
%%%H_*(ko)\{\Sigma^{-1}\zeta_1^2,\Sigma^{-1}\zeta_2,\Sigma^{-1}\zeta_1^2\zeta_2\},
%%%\]
%%%hence $\TAQ_5(MO,S;H) = 0$. If such a morphism $H\F_2\to MO$
%%%exists, then since the composition
%%%\[
%%%\xymatrix{
%%%H \ar[r]\ar@/^13pt/[rr]^{\sim} & MO \ar[r] & H
%%%}
%%%\]
%%%is the identity on the bottom cell, it must be a weak equivalence
%%%since~$H$ is atomic~\cite{AJB-JPM}. Applying $\TAQ_*(-,S;H)$ we
%%%obtain a commutative diagram
%%%\[
%%%\xymatrix{
%%%\TAQ_*(H,S;H) \ar[r]\ar@/^15pt/[rr]^{\iso}
%%%                    & \TAQ_*(MO,S;H)\ar[r] & \TAQ_*(H,S;H)
%%%}
%%%\]
%%%where $\TAQ_6(H,S;H) \neq 0$ and $\TAQ_6(MO,S;H) = 0$, giving
%%%a contradiction.
%%%

If such a morphism $H\to MO$ existed, the generator
$\zeta_1\in H_1H=\mathcal{A}(2)_1$ would map to the
algebra generator $a_1\in H_1MO$. Using the Dyer-Lashof
action calculated by Kochman~\cite{SOK:DLops}, see
Theorem~\ref{thm:DL-MU}, we have
\[
\dlQ^4a_1 \equiv a_5 \mod{\text{decomposables}}.
\]
As in the proof of~\cite{Hu-Kriz-May}*{proposition~2.11},
this leads to a contradiction since there is no degree $5$
indecomposable in $\mathcal{A}(2)_*$.
\end{proof}

We will not give the details here, but it seems worth mentioning
that the Thom spectrum $MU/O$ associated to the infinite loop
space $U/O$ which is the fibre in the sequence
\[
U/O \to BO \to BU,
\]
is a core for $MO$. It turns out that $H_*(MU/O;\F_2)$ embeds
into $H_*(MO;\F_2)$ as a polynomial subalgebra on odd degree
generators the only Dyer-Lashof indecomposable has degree~$1$.
In fact
\[
\Omega_S(MU/O) \iso MU/O\wedge \Sigma ko
\]
and so
\[
\TAQ_*(MU/O,S;H\F_2) = H_*(\Sigma ko;\F_2),
\]
and under the $\TAQ$-Hurewicz homomorphism the Dyer-Lashof
indecomposable generator is sent to $\Sigma1$.

\section{\Einfty  orientations for complex line bundles}
\label{sec:UnivCplxOrient}

We end with a discussion involving the suspension spectrum
$\Sigma^{\infty-2}\CPi=\Sigma^{-2}\Sigma^{\infty}\CPi$
discussed in~\cite{AB-BR:Mxi}. A complex orientation 
for a ring spectrum $E$ is the homotopy class of a map 
$\Sigma^{\infty-2}\CPi\to E$ whose restriction
\[
S^0 \sim \Sigma^{-2}\Sigma^{\infty}\CP^1
    = \Sigma^{-2}\Sigma^{\infty}S^2\to E
\]
is homotopic to the unit of~$E$. When $E$ is a commutative
$S$-algebra, such a map $\Sigma^{\infty-2}\CPi\to E$ induces
a unique morphism of commutative $S$-algebras
\[
\mathbb{P}\Sigma^{\infty-2}\CPi \to E.
\]
Because of the condition involving the bottom cell, there
is a commutative diagram of solid arrows
\[
\xymatrix{
\mathbb{P}S^0
\ar@{}[dr]|{\PO}
%{\text{\Large$\ulcorner$}}
             \ar[r]\ar@{ >->}[d]
 & \mathbb{P}\Sigma^{\infty-2}\CPi\ar@{ >->}[d]\ar@/^21pt/[ddr] & \\
\mathbb{P}D^1\ar[r]\ar@/_21pt/[drr]
 & \tilde{\mathbb{P}}\Sigma^{\infty-2}\CPi\ar@{.>}[dr] & \\
 & & E
}
\]

\bigskip
\noindent
and hence a unique dotted arrow making the whole diagram
commute. This shows that $\tilde{\mathbb{P}}\Sigma^{\infty-2}\CPi$
is universal for maps to $E$ which give complex orientations
for complex line bundles. Of course the inclusion map
$\Sigma^{\infty-2}\CPi\to\tilde{\mathbb{P}}\Sigma^{\infty-2}\CPi$
itself provides a complex orientation.
%\end{examp}

\begin{lem}\label{lem:UnivCplxOrient}
The universal complex orientation
\[
\Sigma^{\infty-2}\CPi = \Sigma^{\infty-2}MU(1) \to MU
\]
induces a rational equivalence of commutative $S$-algebras
$\sigma\:\tilde{\mathbb{P}}\Sigma^{\infty-2}\CPi\to MU$.
Furthermore, the inclusion
$\Sigma^{\infty-2}\CPi\to\tilde{\mathbb{P}}\Sigma^{\infty-2}\CPi$
induces a morphism of ring spectra
\[
MU\to\tilde{\mathbb{P}}\Sigma^{\infty-2}\CPi
\]
which provides a splitting of $\sigma$ in the homotopy
category $\bar{h}\mathscr{M}_{MU}$.
\end{lem}
\begin{proof}
The rational result is straightforward since an argument
using the K\"unneth spectral sequence gives
\[
H_*(\tilde{\mathbb{P}}\Sigma^{\infty-2}\CPi;\Q)
 = \Q[\tilde{\beta}_r : r\geq1],
\]
where $\tilde{\beta}_r$ is the image of the canonical
generator $\beta_r\in H_{2r}(\CPi)$. Then the morphism
$\tilde{\mathbb{P}}\Sigma^{\infty-2}\CPi\to MU$ induces
an isomorphism of rings
\[
H_*(\tilde{\mathbb{P}}\Sigma^{\infty-2}\CPi;\Q)
                                       \to H_*(MU;\Q).
\]
It is easy to see that
\[
H_*(\tilde{\mathbb{P}}\Sigma^{\infty-2}\CPi;\Z)
                                       \to H_*(MU;\Z)
\]
is epic.

The composition
\[
\Sigma^{\infty-2}\CPi\to MU
        \to\tilde{\mathbb{P}}\Sigma^{\infty-2}\CPi
        \xrightarrow{\;\sigma\;} MU
\]
is homotopic to the canonical orientation, so the composition
\[
MU\to\tilde{\mathbb{P}}\Sigma^{\infty-2}\CPi
                                 \xrightarrow{\;\sigma\;} MU
\]
is homotopic to the identity by the classical universality of
the commutative ring spectrum~$MU$ described by
Adams~\cite{JFA:Blue}.
\end{proof}

We remark that
$\tilde{\mathbb{P}}\Sigma^{\infty-2}\CPi$ is weakly equivalent
to the Thom spectrum associated with the infinite loop map
$\Omega^{\infty}\Sigma^{\infty}\CPi\to BU$ extending the natural
map $\CPi\to BU$ whose fibre $F$ has torsion homotopy groups;
in fact, a result of Graeme Segal shows that there is an
equivalence of spaces
\[
\Omega^{\infty}\Sigma^{\infty}\CPi \sim  BU\times F.
\]

The morphism $\tilde{\mathbb{P}}\Sigma^{\infty-2}\CPi\to MU$
can be converted into a fibration (in either of the two model
categories $\mathscr{M}_S$ or $\mathscr{C}_S$), giving a
commutative diagram
\[
\xymatrix{
\tilde{\mathbb{P}}\Sigma^{\infty-2}\CPi \ar[dr]\ar@{ >->}[rr]^{\sim}
& &
(\tilde{\mathbb{P}}\Sigma^{\infty-2}\CPi)'\ar@{->>}[dl] \\
 & MU &
}
\]
where $(\tilde{\mathbb{P}}\Sigma^{\infty-2}\CPi)'$ is cofibrant
in $\mathscr{C}_S$. The map
$(\tilde{\mathbb{P}}\Sigma^{\infty-2}\CPi)'\to MU$ is a morphism
in the subcategory
$\mathscr{M}_{(\tilde{\mathbb{P}}\Sigma^{\infty-2}\CPi)'}$
of $\mathscr{M}_S$.
\begin{cor}\label{cor:UnivCplxOrient}
The fibre of\/ $(\tilde{\mathbb{P}}\Sigma^{\infty-2}\CPi)'\to MU$
is rationally trivial.
\end{cor}
A version of the next result appears in \cite{AB-BR:Mxi}.
\begin{prop}\label{prop:MU->PCP}
For a prime $p$, there can be no morphism of commutative
$S_{(p)}$-algebras
$\theta\:MU\to(\tilde{\mathbb{P}}\Sigma^{\infty-2}\CPi)_{(p)}$
for which $\sigma\circ\theta$ is a weak equivalence. Hence
there can be no morphism of commutative $S$-algebras
$\theta\:MU\to\tilde{\mathbb{P}}\Sigma^{\infty-2}\CPi$ for
which $\sigma\circ\theta$ is a weak equivalence.
\end{prop}
\begin{proof}
It suffices to prove the result for a prime~$p$, and we will
assume all spectra are localised at~$p$. Assume such a morphism
$\theta$ existed. Then by naturality of $\Omega_S$, there are
(derived) morphisms of $MU$-modules and a commutative diagram
\[
\xymatrix{
\Omega_{S}(MU) \ar[r]_(.4){\theta} \ar@/^19pt/[rr]^{\sim}
& \Omega_{S}(\tilde{\mathbb{P}}\Sigma^{\infty-2}\CPi)\ar[r]_(.6){\sigma}
& \Omega_{S}(MU)
}
\]
which induces a commutative diagram in $\TAQ_*(-;H\F_p)$
\[
\xymatrix{
H_*(\Sigma^2ku;\F_p) \ar[r]_(.46){\theta_*} \ar@/^19pt/[rr]^{\iso}
& H_*(\Sigma^{\infty-2}\CPi_2;\F_p)\ar[r]_(.56){\sigma_*}
& H_*(\Sigma^2ku;\F_p)
}
\]
where $\CPi_2 = \CPi/\CP^1$, and we use a result
due to Basterra \& Mandell~\cite{MB-MM:TAQ} (for
further details see~\cite{BGRtaq}*{sections~4~\&~5})
to identify
$\TAQ_*(MU,S;H\F_p)$, namely
\[
\Omega_S(MU) \sim MU\wedge\Sigma^2ku.
\]

It is standard that
\[
H_n(\Sigma^{\infty-2}\CPi_2;\F_p) =
\begin{cases}
\F_p & \text{if $n\geq2$ and is even}, \\
\;0 & \text{otherwise}.
\end{cases}
\]
On the other hand, when $p=2$,
\[
H_*(ku;\F_2) = \F_2[\zeta_1^2,\zeta_2^2,\zeta_3,\zeta_4,\ldots]
                        \subseteq\mathcal{A}(2)_*
\]
with $|\zeta_s|=2^s-1$, while when $p$ is odd,
$\Sigma^2ku\sim \bigvee_{1\leq r\leq p-1}\Sigma^{2r}\ell$
with
\[
H_*(\ell;\F_2) =
\F_p[\zeta_1,\zeta_2,\zeta_3,\ldots]\otimes\Lambda(\bar{\tau}_r:r\geq2)
\]
where $|\zeta_s|=2p^s-2$ and $|\bar{\tau}_s|=2p^s-1$. Clearly
this means that no such $\theta$ can exist.
\end{proof}

At the prime~$2$, $\Sigma^{\infty-2}\CPi$ is known to be minimal
atomic~\cite{AJB-JPM}*{proposition~5.9}. The next result shows
that the functor $\tilde{\mathbb{P}}$ need not preserve this
property; see Proposition~\ref{prop:MinAtom} for a converse
to this.
\begin{prop}\label{prop:P-minatom}
The $2$-local commutative $S$-algebra
$\tilde{\mathbb{P}}\Sigma^{\infty-2}\CPi_{(2)}$ is not minimal
atomic.
\end{prop}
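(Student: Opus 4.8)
The plan is to detect non-minimality via the $\TAQ$-Hurewicz homomorphism. Recall from \cite{BGRtaq} that a connective $p$-local commutative $S$-algebra is minimal atomic precisely when its $\TAQ$-Hurewicz homomorphism with coefficients in $H\F_p$ (see~\eqref{eq:TAQ-Hurewiczhom}) vanishes in every degree above its bottom cell, which here sits in degree~$2$. So, writing $X=\Sigma^{\infty-2}\CPi_{(2)}$, it suffices to produce a class in some degree $>2$ on which
\[
\theta\:\pi_*(\tilde{\mathbb{P}}X)\lra\TAQ_*(\tilde{\mathbb{P}}X,S;H\F_2)
\]
is non-zero. By Theorem~\ref{thm:univder-Q^I} we may identify the target with $\tilde H_*(X;\F_2)$, and $\theta$ is the composite of the ordinary $\bmod{\,2}$ Hurewicz homomorphism with $\Delta=\Delta_{(\tilde{\mathbb{P}}X,S)}$; the latter kills all decomposables and all $\bar{\dlQ}^Ix$ with $\len(I)>0$ and restricts to an isomorphism from the span of the linear generators onto $\TAQ_*$. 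In degree~$4$ the only generator of $H_*(\tilde{\mathbb{P}}X;\F_2)$ is the linear one, so it is enough to exhibit $\alpha\in\pi_4(\tilde{\mathbb{P}}X)$ with \emph{indecomposable} Hurewicz image.

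Such an $\alpha$ comes from comparison with $MU$. By Lemma~\ref{lem:UnivCplxOrient} the rational equivalence $\sigma\:\tilde{\mathbb{P}}X\lra MU$ of commutative $S$-algebras admits a splitting $MU\lra\tilde{\mathbb{P}}X$ in $\bar{h}\mathscr{M}_{MU}$, so $\sigma_*$ is onto on $\pi_*$ and epic on $H_*(-;\Z)$, hence an isomorphism on $H_4(-;\Z)$ (free of rank~$2$ on both sides) and therefore an isomorphism $H_4(\tilde{\mathbb{P}}X;\F_2)\xrightarrow{\iso}H_4(MU;\F_2)=\F_2\{b_1^2,b_2\}$; being the restriction of a ring homomorphism it carries decomposables to decomposables, hence induces an isomorphism on the one-dimensional spaces of indecomposables. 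Pick $\alpha\in\pi_4(\tilde{\mathbb{P}}X)$ with $\sigma_*(\alpha)=[\CP^2]$. By naturality of the Hurewicz map, $\sigma_*$ sends the Hurewicz image of $\alpha$ to that of $[\CP^2]$, whose image in the indecomposables $\F_2\{b_2\}$ is the characteristic number $s_2[\CP^2]=3\equiv1\pmod2$. Hence the Hurewicz image of $\alpha$ is indecomposable and $\theta(\alpha)=\Delta(\text{Hurewicz image of }\alpha)\neq0$ in $\TAQ_4$.

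Since $4>2$, this shows that $\tilde{\mathbb{P}}\Sigma^{\infty-2}\CPi_{(2)}$ is not minimal atomic, in contrast to $\Sigma^{\infty-2}\CPi$ itself; intuitively, the universal complex orientation forces $MU$-theoretic classes such as $[\CP^2]$ into $\tilde{\mathbb{P}}X$, where $\TAQ$ detects them. I expect the main obstacle to be the first step: stating precisely the characterisation of minimal atomic commutative $S$-algebras with $\pi_0=\Z_{(2)}$ — what serves as the bottom cell, and why a non-vanishing $\TAQ$-Hurewicz homomorphism above it obstructs minimality. The other ingredients — the description of $\Delta$ from Theorem~\ref{thm:univder-Q^I}, the homology bookkeeping around $\sigma$ (via Lemma~\ref{lem:UnivCplxOrient}), and the classical value $s_2[\CP^2]=3$ — are routine.
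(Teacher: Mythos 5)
Your argument is correct and follows essentially the same route as the paper: both reduce to the fact that minimal atomic forces the $\TAQ$-Hurewicz homomorphism $\theta$ to vanish in positive degrees (\cite{BGRtaq}*{theorem~3.3}), use the splitting of $\sigma$ from Lemma~\ref{lem:UnivCplxOrient} to get surjectivity of $\sigma_*$ on homotopy, and detect non-vanishing through $MU_{(2)}$. The only difference is that the paper invokes naturality of $\theta$ together with the $2$-primary calculations of \cite{BGRtaq}*{section~5} for $MU_{(2)}$, whereas you re-derive the needed degree-$4$ non-vanishing directly from $s_2[\CP^2]=3$ and the description of $\Delta$ in Theorem~\ref{thm:univder-Q^I}; both are valid.
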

\begin{proof}
If $\tilde{\mathbb{P}}\Sigma^{\infty-2}\CPi_{(2)}$ were minimal
atomic then by~\cite{BGRtaq}*{theorem~3.3}, the TAQ Hurewicz
homomorphism (induced from the universal derivation)
\[
\theta\:\pi_n(\tilde{\mathbb{P}}\Sigma^{\infty-2}\CPi_{(2)})
\to \TAQ_n(\tilde{\mathbb{P}}\Sigma^{\infty-2}\CPi_{(2)},S;H\F_2))
\]
would be trivial for $n>0$.

By naturality, there is a commutative diagram
\[
\xymatrix{
\pi_*(\tilde{\mathbb{P}}\Sigma^{\infty-2}\CPi_{(2)})
\ar@{->>}[rr]^{\sigma_*}\ar[d]_{\theta} && \pi_*(MU_{(2)})\ar[d]^{\theta} \\
\TAQ_n(\tilde{\mathbb{P}}\Sigma^{\infty-2}\CPi_{(2)},S;H\F_2)
\ar[rr]^{\sigma_*}\ar[d]_{\iso} & & \TAQ_n(MU_{(2)},S;H\F_2))\ar[d]^{\iso} \\
H_*(\CPi_2;\F_2)\ar[rr]^{\sigma_*} & & H_*(\Sigma^2ku;\F_2)
}
\]
in which the surjectivity of the top row follows from
Lemma~\ref{lem:UnivCplxOrient}. The $2$-primary calculations
of~\cite{BGRtaq}*{section~5} show that the right hand Hurewicz
homomorphism~$\theta$ is non-zero in positive degrees, hence
so is the left hand one. Therefore
$\tilde{\mathbb{P}}\Sigma^{\infty-2}\CPi_{(2)}$ cannot be
minimal atomic.
\end{proof}

We leave the interested reader to formulate and verify analogues
for an odd prime~$p$ based on desuspensions of the $p$-local
summands of $\Sigma^\infty\CPi_{(p)}$.

\begin{rem}\label{rem:TAQ-comparison}
We point out that $\sigma_*\: H_*(\CPi_2;\F_2)\to H_*(\Sigma^2ku;\F_2)$
is different from the homomorphism induced by any map of spectra
$\Sigma^{\infty-2}\CPi_2\to\Sigma^2ku$ which is an equivalence
on the bottom cell. For such a map composed with the natural map
$\Sigma^2ku\to H\F_2$ induces the homomorphism in homology given
by
\[
\Sigma^{-2}\beta_n \mapsto
\begin{cases}
\xi_s^4 &\text{if $n=2^s$}, \\
0       &\text{otherwise}.
\end{cases}
\]
But also $\Sigma^{-2}\beta_n \mapsto b_{n-1}$ in $H_*(MU;\F_2)$
and under the $\TAQ$-Hurewicz homomorphism,
\[
b_{n-1} \mapsto
\begin{cases}
\xi_s^2 &\text{if $n=2^s+1$}, \\
0       &\text{otherwise}.
\end{cases}
\]
\end{rem}

As promised above, here is a positive result relating the
additive and multiplicative notions of minimal atomic. We
regard $p$-local spectra as equivalent to $S$-modules.

\begin{prop}\label{prop:MinAtom}
Let $p$ be a prime and let $S$ be the $p$-local sphere
spectrum. Suppose that $X$ is a connective Hurewicz
$S$-module with chosen bottom cell $S^0\to X$. If\/
$\tilde{\mathbb{P}}X$ is minimal atomic as a commutative
$S$-algebra, then $X$ is minimal atomic as an $S$-module.
\end{prop}
\begin{proof}
Working in $S^0/\mathscr{M}_S$ we can replace $X$ by a
CW spectrum which is weakly equivalent to it so we will
assume that this has been done. Using observations in
Remark~\ref{rem:S^0/pushouts}, we can relate the two
$n$-skeleta. The $(n+1)$-skeleton $X^{[n+1]}$ is
constructed using a map of $S$-modules
\[
i^n\:\bigvee_iS^n \to X^{[n]}
\]
for which
\[
\ker[i^n_*\:\pi_n(\bigvee_iS^n) \to \pi_n(X)^{\<n\>}]
 \subseteq p\,\pi_n(\bigvee_iS^n).
\]
Similarly, we form the $(n+1)$-skeleton
$\tilde{\mathbb{P}}^{\<n+1\>}X$ is constructed from
$\tilde{\mathbb{P}}^{\<n\>}X$ using a morphism of
$S$-modules
\[
j^n\:\bigvee_iS^n \to \tilde{\mathbb{P}}^{\<n\>}X
\]
for which
\[
\ker[j^n_*\:\pi_n(\bigvee_iS^n) \to \pi_n(\tilde{\mathbb{P}}^{\<n\>}X)]
       \subseteq p\,\pi_n(\bigvee_iS^n).
\]
In $S^0/\mathscr{M}_S$ there is a commutative diagram
\[
\xymatrix{
    & \ar[dl]_{i^n}\bigvee_iS^n\ar[dr]^{j^n} & \\
X^{[n]}\ar[rr]^{\mathrm{incl}} & & \tilde{\mathbb{P}}^{\<n\>}X
}
\]
in which~$i^n$ provides the attaching maps for the $(n+1)$-cells
of~$X$. Clearly
\[
\ker[i^n_*\:\pi_n(\bigvee_iS^n) \to \pi_n(X^{[n]})]
\subseteq
\ker[j^n_*\:\pi_n(\bigvee_iS^n) \to \pi_n(\tilde{\mathbb{P}}^{\<n\>}X)]
        \subseteq p\,\pi_n(\bigvee_iS^n),
\]
and it follows that $X$ is nuclear.
\end{proof}

%%%%%%%%%%%%%%%%%%%%%%%

\appendix

\section{A proof and a Lemma}\label{sec:Missingpf}

For completeness we outline a proof of~\cite{BGRtaq}*{proposition~1.6},
due to Philipp Reinhard; unfortunately this was only produced
after that paper was published. Our approach is similar to that
of McCarthy and Minasian in~\cite{RM&VM}*{theorem~6.1}, however
this appears to be incorrect as stated (at one stage they seem
to assume that~$M$ is an algebra).

\begin{prop}\label{prop:TAQ-prop1.6}
Let $R$ be a commutative $S$-algebra and let $X$ be a cofibrant
$R$-module. Then there is a weak equivalence of
$\mathbb{P}_RX$-modules
\[
\Omega_R(\mathbb{P}_R X) \sim \mathbb{P}_R X \wedge_R X.
\]
\end{prop}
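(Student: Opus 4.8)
The plan is to pin down $\Omega_R(\mathbb{P}_R X)$ by a representability argument: I will show that the $\mathbb{P}_RX$-module $\mathbb{P}_RX\wedge_R X$ corepresents the same functor on $\bar h\mathscr{M}_{\mathbb{P}_RX}$ that characterises $\Omega_R(\mathbb{P}_RX)$, and then conclude by Yoneda. Write $B=\mathbb{P}_RX$. By the characterisation of the universal derivation recalled in Section~\ref{sec:Recollections}, for a $B$-module $M$ there is a natural isomorphism
\[
\bar h\mathscr{M}_B(\Omega_R(B),M)\iso\bar h\mathscr{C}_R/B(B,B\vee M),
\]
where $B\vee M$ is the square-zero extension with its canonical projection to $B$. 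So it suffices to produce a natural isomorphism $\bar h\mathscr{C}_R/B(B,B\vee M)\iso\bar h\mathscr{M}_B(B\wedge_R X,M)$.

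First I would compute the left-hand side by freeness, in two stages. Since $(\mathbb{P}_R,\mathbb{U})$ is a Quillen adjunction and $X$ is cofibrant, a morphism $B=\mathbb{P}_RX\lra B\vee M$ in $\bar h\mathscr{C}_R$ corresponds to a morphism $X\lra B\vee M$ in $\bar h\mathscr{M}_R$, and since $\bar h\mathscr{M}_R$ is additive we have $B\vee M\simeq B\times M$, so such a morphism is a pair $(f,d)$ with $f\:X\lra B$ and $d\:X\lra M$. Imposing the condition that the algebra map lie over $B$ — that its composite with the projection $B\vee M\lra B$ be $\id_B$ — forces, after passing back across the $(\mathbb{P}_R,\mathbb{U})$-adjunction, the component $f$ to be the canonical inclusion $\iota_X\:X\lra B$ adjoint to $\id_B$, while $d$ remains arbitrary; this is precisely where the square-zero structure is used, since it is what makes the ideal $M$ split off. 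Hence $\bar h\mathscr{C}_R/B(B,B\vee M)\iso\bar h\mathscr{M}_R(X,M)$, naturally in $M$. Finally, extension of scalars along $R\lra B$ is a Quillen adjunction and, $X$ being cofibrant, $B\wedge_R X$ is the derived smash and is cofibrant as a $B$-module, so $\bar h\mathscr{M}_R(X,M)\iso\bar h\mathscr{M}_B(B\wedge_R X,M)$. Composing these natural isomorphisms and applying the Yoneda lemma in $\bar h\mathscr{M}_B$ yields the asserted weak equivalence of $B$-modules.

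The main obstacle will be the homotopical bookkeeping around the square-zero extension and the slice category: one must use functorial cofibrant/fibrant replacements so that $\bar h\mathscr{C}_R/B(B,B\vee M)$ really is the derived mapping set and is functorial in $M\in\bar h\mathscr{M}_B$, and one must check that both adjunction bijections survive passage to homotopy categories — which is where cofibrancy of $X$, hence of $B$ as a commutative $R$-algebra and of $B\wedge_R X$ as a $B$-module, is needed. The point flagged in the remark before the statement — that an earlier treatment seems to assume $M$ is an algebra — is exactly the warning not to conflate $B\vee M$ with a construction requiring a multiplication on $M$; $M$ must stay a plain $B$-module throughout. An alternative route, closer to the cited argument of McCarthy--Minasian, is to use Basterra's explicit model $\Omega_R(B)\simeq Q_B\,I_B(B\wedge_R B)$, where $I_B$ and $Q_B$ are her augmentation-ideal and indecomposables functors: here $B\wedge_R B=\mathbb{P}_R(X\vee X)\simeq\mathbb{P}_RX\wedge_R\mathbb{P}_RX$, a change of coordinates on $X\vee X$ turns the multiplication augmentation into the projection killing the second summand, identifying $I_B(B\wedge_R B)$ with $B\wedge_R\overline{\mathbb{P}}_RX$, where $\overline{\mathbb{P}}_RX=\bigvee_{j\geq1}X^{(j)}/\Sigma_j$ is the non-unital free algebra on $X$; then $Q_B$ applied to this free non-unital $B$-algebra recovers its generators $B\wedge_R X$, using that $Q$ commutes with base change along $R\lra B$. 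The hard part there is the homotopically correct identification of the augmentation ideal with $B\wedge_R\overline{\mathbb{P}}_RX$ as non-unital $B$-algebras, and the computation of $Q_B$ on it — the same difficulty in a different guise.
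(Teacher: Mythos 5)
Your argument is essentially the paper's own proof (Appendix~\ref{sec:Missingpf}): both identify $\bar{h}\mathscr{C}_R/\mathbb{P}_RX(\mathbb{P}_RX,\mathbb{P}_RX\vee M)$ with $\bar{h}\mathscr{M}_R(X,M)$ via the free--forgetful adjunction in slice categories, then with $\bar{h}\mathscr{M}_{\mathbb{P}_RX}(\mathbb{P}_RX\wedge_RX,M)$ by extension of scalars, and conclude by Yoneda. The alternative route you sketch via $\mathrm{L}Q_B\mathrm{R}I_B(B\wedge_RB)$ is precisely the McCarthy--Minasian argument the paper deliberately avoids, so your main line is the right one.
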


\begin{proof}
For every $M\in\mathscr{M}_{\mathbb{P}_R X}$ there is an adjunction
\[
\mathscr{C}_R / \mathbb{P}_R X (\mathbb{P}_R X,\mathbb{P}_R X\vee M)
 \iso \mathscr{M}_R / \mathbb{P}_R X(X,\mathbb{P}_R X\vee M),
\]
where $\mathscr{M}_R / \mathbb{P}_R X$ denotes the category of
$R$-modules over $\mathbb{P}_R X$. Because the forgetful functor
$\mathscr{C}_R / \mathbb{P}_R X \to\mathscr{M}_R/\mathbb{P}_R X$
respects fibrations and acyclic fibrations, the adjunction passes
to homotopy categories, giving
\[
\bar{h}\mathscr{C}_R/\mathbb{P}_R X(\mathbb{P}_R X,\mathbb{P}_R X\vee M)
\iso \bar{h}\mathscr{M}_R /\mathbb{P}_R X(X,\mathbb{P}_R X\vee M).
\]
Now we have
\[
\mathscr{M}_R /\mathbb{P}_R X (\mathbb{P}_R X, M)
                    \iso \mathscr{M}_R / X(X,X\vee M)
\]
and the adjunction again passes to homotopy categories
and gives
\[
\bar{h}\mathscr{M}_R /\mathbb{P}_R X (\mathbb{P}_R X, M)
\iso \bar{h}\mathscr{M}_R / X(X,X\vee M).
\]
Since in the homotopy category $X \vee M$ is the product
of~$X$ and~$M$, we have
\[
\bar{h}\mathscr{M}_R / X(X,X\vee M)\iso\bar{h}\mathscr{M}_R(X,M).
\]
By using the free functor from $R$-modules to $\mathbb{P}_R X$-modules,
we obtain
\[
\bar{h}\mathscr{M}_R / X(X,X\vee M) \iso
\bar{h}\mathscr{M}_{\mathbb{P}_R X}(\mathbb{P}_R X \wedge_R X, M).
\]
Thus we have shown that
\begin{align*}
\bar{h}\mathscr{M}_{\mathbb{P}_R X}(\Omega_R(\mathbb{P}_R X), M)
 &\iso \bar{h}\mathscr{C}_R / \mathbb{P}_R X(\mathbb{P}_R X,\mathbb{P}_R X \vee M) \\
 &\iso \bar{h} \mathscr{M}_{\mathbb{P}_R X}(\mathbb{P}_R X \wedge_R X, M).
\end{align*}
Using Yoneda's lemma, we obtain the desired equivalence
\[
\Omega_R(\mathbb{P}_R X)\sim \mathbb{P}_R X \wedge_R X.
\qedhere
\]
\end{proof}

We also give a useful result on the adjunction for a
commutative $R$-algebra. Let~$A$ be a cofibrant
commutative $R$-algebra and let
\[
\xymatrix{
A^c\ar@{->>}[r]^{\;\sim\;} & A
}
\]
be its functorial cofibrant replacement in the model
category of $R$-modules $\mathscr{M}_R$. Let
\[
\tilde{\mu}\:\mathbb{P}_RA^c\to\mathbb{P}_RA\to A
\]
be the extension of the multiplication. We have
\[
\Omega_R(\mathbb{P}_RA^c) \iso \mathbb{P}_RA^c \wedge_R A^c,
\]
and also the $A$-module $\Omega_R(A)$ becomes a
$\mathbb{P}_RA^c$-module via pullback along $\tilde{\mu}$.
Writing $\delta$ (without decorations) for universal
derivations, we obtain a commutative diagram in
$\bar{h}\mathscr{M}_{R}$ (with the pentagon commuting
in $\bar{h}\mathscr{M}_{\mathbb{P}_RA^c}$).
\begin{equation}\label{eq:Diffadjunct}
\xymatrix{
&R\wedge_R A^c\ar[rd]^{\mathrm{unit}}&& \\ %@/^15pt/
A^c\ar[r]\ar[rdd]_{\sim}\ar@{<->}[ur]^{\iso} & \mathbb{P}_RA^c\ar[dd]^{\tilde{\mu}}\ar[r]^(.4){\delta}
 & \mathbb{P}_RA^c\wedge_R A^c\ar[dr]^{\tilde{\mu}\wedge\delta}\ar[dd]^{\omega(\tilde{\mu})} & \\
 && & A\wedge_R\Omega_R(A)\ar[ld]^{\mathrm{mult}}   \\
 & A\ar[r]^(.4){\delta} & \Omega_R(A) &
}
\end{equation}
Here $\omega(\tilde{\mu})$ denotes the induced `derivative'
morphism $\Omega_R(\mathbb{P}_RA^c)\to\Omega_R(A)$.

\begin{lem}\label{lem:Diffadjunct}
Suppose that $M$ is an $A$-module and therefore an
$\mathbb{P}_RA^c$-module. Then the induced morphism on
$\TAQ_*(-)$, $\tilde{\mu}_*$, is given by the following
commutative diagram.
\[
\xymatrix{
\TAQ_*(\mathbb{P}_RA^c,R;M)\ar@{=}[d] \ar[rr]^{\tilde{\mu}_*}
                            && \TAQ_*(A,R;M)\ar@{=}[d] \\
 \pi_*(M\wedge_R A^c)\ar[dr]_{(I\wedge\delta_{(A,R)})_*}
    && \pi_*(M\wedge_A\Omega_R(A)) \\
    & \pi_*(M\wedge_R\Omega_R(A))\ar[ur] &
}
\]
\end{lem}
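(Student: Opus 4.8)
The plan is to unwind the definition of $\tilde\mu_*$ on topological Andr\'e--Quillen homology and then read off the answer from diagram~\eqref{eq:Diffadjunct}, which has been set up precisely for this purpose. Recall that for a morphism $f\colon B\to B'$ of commutative $R$-algebras and a $B'$-module $N$, the induced map $f_*\colon\TAQ_*(B,R;N)\to\TAQ_*(B',R;N)$ is obtained by applying $\pi_*(N\wedge_B(-))$ to the ``derivative'' $\omega(f)\colon\Omega_R(B)\to\Omega_R(B')$ (a morphism in $\bar h\mathscr{M}_B$, where $\Omega_R(B')$ carries the $B$-module structure pulled back along $f$), and then composing with the canonical base-change map $\pi_*(N\wedge_B\Omega_R(B'))\to\pi_*(N\wedge_{B'}\Omega_R(B'))$. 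I would apply this with $B=\mathbb{P}_RA^c$, $B'=A$, $f=\tilde\mu$ and $N=M$; by Proposition~\ref{prop:TAQ-prop1.6} we have $\Omega_R(\mathbb{P}_RA^c)\simeq\mathbb{P}_RA^c\wedge_RA^c$, whence $M\wedge_{\mathbb{P}_RA^c}\Omega_R(\mathbb{P}_RA^c)\simeq M\wedge_RA^c$, which is the identification on the left of the stated diagram.

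The crux is to identify $\omega(\tilde\mu)$ under this identification, and this is exactly the content of the pentagon in~\eqref{eq:Diffadjunct}: viewed as a $\mathbb{P}_RA^c$-module map $\mathbb{P}_RA^c\wedge_RA^c\to\Omega_R(A)$, the derivative $\omega(\tilde\mu)$ equals the composite $\mathbb{P}_RA^c\wedge_RA^c\xrightarrow{\tilde\mu\wedge\delta_{(A,R)}}A\wedge_R\Omega_R(A)\xrightarrow{\mathrm{mult}}\Omega_R(A)$. (If desired this can be re-derived directly: $\Omega_R(\mathbb{P}_RA^c)=\mathbb{P}_RA^c\wedge_RA^c$ is the free $\mathbb{P}_RA^c$-module on $A^c$, with $\delta_{(\mathbb{P}_RA^c,R)}$ restricting on degree-one elements to the inclusion $A^c\to 1\wedge A^c$ as in~\eqref{eq:univder-modulemap}; combining this with the chain rule $\omega(\tilde\mu)\circ\delta_{(\mathbb{P}_RA^c,R)}=\delta_{(A,R)}\circ\tilde\mu$ and the fact that $\tilde\mu$ restricts to the equivalence $A^c\xrightarrow{\sim}A$ on degree-one elements pins down $\omega(\tilde\mu)$ on the free module.)

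Next I would apply $\pi_*(M\wedge_{\mathbb{P}_RA^c}(-))$ to this composite. Smashing $\tilde\mu\wedge\delta_{(A,R)}$ with $M$ over $\mathbb{P}_RA^c$, the factor $\tilde\mu\colon\mathbb{P}_RA^c\to A$ is absorbed via the canonical map $M\wedge_{\mathbb{P}_RA^c}A\to M$ coming from the $A$-module structure of $M$ (this uses that the $\mathbb{P}_RA^c$-action on $M$ factors through $\tilde\mu$), leaving $M\wedge_RA^c\xrightarrow{I\wedge\delta_{(A,R)}}M\wedge_R\Omega_R(A)$ after the equivalence $A^c\xrightarrow{\sim}A$; smashing $\mathrm{mult}$ with $M$ then yields precisely the canonical base-change map $M\wedge_R\Omega_R(A)\to M\wedge_A\Omega_R(A)$. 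Taking $\pi_*$ produces exactly the triangle in the statement.

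The main obstacle is purely one of bookkeeping with base change: one must check that smashing $\tilde\mu\wedge\delta_{(A,R)}$ over $\mathbb{P}_RA^c$ with $M$ genuinely produces $I\wedge\delta_{(A,R)}$ followed by base change, and that the two a priori different maps $M\wedge_R\Omega_R(A)\to M\wedge_A\Omega_R(A)$ in play — the one coming from $\mathrm{mult}$ and the canonical one used to define $f_*$ — agree. Both verifications are formal, using cofibrancy (so the relevant derived smash products are computed by the point-set ones) and the hypothesis that $M$ is an honest $A$-module; no input beyond diagram~\eqref{eq:Diffadjunct} and Proposition~\ref{prop:TAQ-prop1.6} is required.
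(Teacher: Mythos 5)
Your proposal is correct and follows essentially the same route as the paper, whose entire proof is the single sentence that the lemma is obtained by applying $\pi_*(M\wedge_R-)$ to diagram~\eqref{eq:Diffadjunct}; you have simply spelled out the base-change bookkeeping (identifying $\omega(\tilde\mu)$ via the pentagon, absorbing $\tilde\mu$ into the $A$-module structure of $M$, and recognising $\mathrm{mult}$ as the canonical map $M\wedge_R\Omega_R(A)\to M\wedge_A\Omega_R(A)$) that the paper leaves implicit.
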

\begin{proof}
This is obtained by applying $\pi_*(M\wedge_R-)$ to~\eqref{eq:Diffadjunct}.
\end{proof}

Since the universal derivation restricts trivially to~$R$,
there is an induced map
\[
\bar{\delta}_{(A,R)}\:A^c/S_R^0\to\Omega_R(A).
\]
So for the reduced free algebra there is a similar commutative
diagram.
\[
\xymatrix{
\TAQ_*(\tilde{\mathbb{P}}_RA^c,R;M)\ar@{=}[d] \ar[rr]^{\tilde{\mu}_*}
                            && \TAQ_*(A,R;M)\ar@{=}[d] \\
 \pi_*(M\wedge_R A^c/S_R^0)\ar[dr]_{(I\wedge\bar{\delta}_{(A,R)})_*}
    && \pi_*(M\wedge_A\Omega_R(A)) \\
    & \pi_*(M\wedge_R\Omega_R(A))\ar[ur] &
}
\]

\section{Some formulae}\label{sec:Formulae}

We begin by recalling formula due to Kochman~\cite{SOK:DLops}.
Actually his results are for the infinite loop spaces such
as $BU$, but the Thom isomorphism commutes with the Dyer-Lashof
operations so we will interpret them in the homology of the
Thom spectrum $MU$ with its \Einfty  structure inherited
from that of $BU$.

Let $p$ be a prime. We will write $H_*(-)=H_*(-;\F_p)$. Let
$b_r\in H_{2r}(MU)$ be the generator obtained as the image
of $\beta_{r+1}\in H_{2r+2}(MU(1))\iso H_{2r+2}(\CPi)$
under the homomorphism induced by the canonical map
$MU(1)\to \Sigma^2MU$ as in~\cite{JFA:Blue}. We will use
the notation $x\approx y$ as shorthand for
$x\equiv y\bmod{\text{decomposables}}$. We also interchangeably
use the notations
\[
(a,b) = (b,a) = \binom{a+b}{a} = \binom{a+b}{b}
\]
for binomial coefficients, where this is taken to be zero
if $a<0$ or $b<0$. We will use the well-known congruence
\begin{equation}\label{eq:Binom-modp}
\binom{n_0+n_1p+\cdots +n_kp^k}{m_0+m_1p+\cdots +m_kp^k}
\equiv
\binom{n_0}{m_0}\binom{n_0}{m_0}\cdots \binom{n_k}{m_k}
                                       \mod{p}
\end{equation}
when $0\leq m_i,n_i\leq p-1$.
\begin{thm}\label{thm:DL-MU}
In $H_*(MU)$ we have
\begin{itemize}
\item
if $p$ is odd,
\[
\dlQ^rb_n \approx (-1)^{r+n+1}(n,r-n-1)b_{n+r(p-1)},
\]
\item
if $p=2$,
\[
\dlQ^{2r}b_n \approx (n,r-n-1)b_{n+r}.
\]
\end{itemize}
\end{thm}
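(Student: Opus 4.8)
The cleanest route, and the one implicit in the preamble to this appendix, is to deduce the formula from Kochman's computation~\cite{SOK:DLops} of the Dyer--Lashof action on $H_*(BU;\F_p)$, transported along the Thom isomorphism. First I would record that the isomorphism $H_*(MU;\F_p)\xrightarrow{\;\iso\;}H_*(BU;\F_p)$ is realised by a map of commutative $S$-algebras — it is induced by the Thom diagonal $MU\lra MU\wedge\Sigma^\infty_+BU$ — and therefore commutes with every operation $\dlQ^r$ and preserves decomposability, hence preserves the relation $\approx$. Under this isomorphism $b_n$ corresponds, for $n\geq1$, to the $n$-th polynomial generator of $H_*(BU;\F_p)$ coming from $H_{2n}(BU(1))=H_{2n}(\CPi)$, while $b_0=1$ and $\dlQ^r(1)=0$ for $r>0$ by naturality of $\dlQ^r$ along the unit $S\lra MU$; so it suffices to prove the analogous statement in $H_*(BU;\F_p)$. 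It is also worth noting at the outset that the coefficient $(n,r-n-1)=\binom{r-1}{n}$ vanishes whenever $r\leq n$, matching the instability of $\dlQ^r$ on a class of degree $2n$ ($\dlQ^rb_n=0$ for $r<n$ and $\dlQ^nb_n=b_n^{\,p}$, which is decomposable), so the content is confined to $r\geq n+1$; and for $p=2$ only the even operations $\dlQ^{2r}$ can act nontrivially, since $H_*(MU)$ is concentrated in even degrees, which is why that case is stated with $\dlQ^{2r}$.

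For the computation in $H_*(BU;\F_p)$ itself the tools I would use are the coalgebra structure $\psi b_m=\sum_{i+j=m}b_i\otimes b_j$ coming from the diagonal of $BU$, the Cartan formula $\psi\dlQ^r=\sum_{s+t=r}(\dlQ^s\otimes\dlQ^t)\psi$, and the Nishida relations relating the dual Steenrod action to the $\dlQ^r$, which already force $\dlQ^r$ on the bottom class $b_1\in H_2(BU)$. One then writes $\dlQ^rb_n\approx c_{r,n}b_{n+r(p-1)}$ with $c_{r,n}\in\F_p$ to be found, and induces on $n$: comparing the coefficients of the tensors $b_a\otimes b_b$ (with $a,b\geq1$) on the two sides of $\psi\dlQ^rb_n=\sum_{s+t=r,\,i+j=n}\dlQ^sb_i\otimes\dlQ^tb_j$ gives a recursion which, together with $c_{r,0}=0$, determines $c_{r,n}$; the binomial coefficient then drops out using the mod-$p$ congruence~\eqref{eq:Binom-modp}, and the global sign $(-1)^{r+n+1}$ is fixed by the Nishida normalisation (and is vacuous when $p=2$). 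I expect the genuine obstacle to be precisely this last step: the coproduct of a \emph{decomposable} class already contributes $b_a\otimes b_b$ terms, so the recursion is not closed without careful accounting of those contributions — this bookkeeping is the combinatorial heart of Kochman's argument. A variant that lightens it is to evaluate $\dlQ^r$ first on the image of $H_*((\CPi)^k)$ under the Whitney-sum maps $(\CPi)^k\lra BU$, where products and coproducts are completely explicit, and then push forward.

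Finally I would point out the parallel route through this paper's own machinery, which is more direct and isolates exactly what has to be computed. By Lemma~\ref{lem:UnivCplxOrient} the universal complex orientation gives a morphism of commutative $S$-algebras $\sigma\:\mathbb{P}\Sigma^{\infty-2}\CPi\lra MU$ which is surjective on $H_*(-;\F_p)$, is a rational equivalence, and sends $x_n=\Sigma^{-2}\beta_{n+1}\in H_{2n}(\Sigma^{\infty-2}\CPi)$ to $b_n$. Since the $\dlQ^r$ are natural for morphisms of commutative $S$-algebras, $\dlQ^rb_n=\sigma_*(\dlQ^r\.x_n)$, and for $r>n$ the class $\dlQ^r\.x_n=\bar{\dlQ}^rx_n$ is one of the polynomial generators of $H_*(\mathbb{P}\Sigma^{\infty-2}\CPi;\F_p)$ described in Theorem~\ref{thm:H*PX-Fp}. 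As $\sigma_*$ is a surjection of graded $\F_p$-algebras it carries indecomposables to indecomposables, and $b_{n+r(p-1)}$ is the only indecomposable of $H_*(MU;\F_p)$ in degree $2(n+r(p-1))$; hence automatically $\dlQ^rb_n\approx c_{r,n}b_{n+r(p-1)}$, so the whole problem collapses to evaluating this one scalar, which one does by the coproduct recursion of the previous paragraph. The residual work — translating between the lower indexing $\bar{\dlQ}_s$ of Section~\ref{sec:PowOps+PX} and the upper indexing $\bar{\dlQ}^r$, and the sign bookkeeping — is routine.
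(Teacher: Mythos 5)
Your first paragraph is essentially all the paper does: Theorem~\ref{thm:DL-MU} is stated in the appendix purely as a recollection of Kochman's theorem, the only justification offered being the remark that the Thom isomorphism commutes with the Dyer-Lashof operations (which holds for the reason you give, namely that the Thom diagonal and the Thom class are maps of $E_\infty$ ring spectra), so the formula in $H_*(MU;\F_p)$ is inherited from the corresponding one in $H_*(BU;\F_p)$ proved in~\cite{SOK:DLops}. Up to that point you and the paper agree, and your remarks on the vanishing range $r\leq n$ and on why only $\dlQ^{2r}$ appears at $p=2$ are correct.

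Where you go beyond the paper and try to prove Kochman's formula, there are genuine gaps. The coproduct/Cartan recursion is not closed, as you yourself half-admit: writing $\dlQ^rb_n=c_{r,n}b_{n+r(p-1)}+D$ with $D$ decomposable, the reduced coproduct of $D$ also contributes terms $b_a\otimes b_b$ with $a,b\geq1$, so comparing those coefficients does not isolate $c_{r,n}$ without full control of $D$; and the coproduct does not descend to the indecomposable quotient, so one cannot dodge this by working there. Nor do the Nishida relations ``force'' $\dlQ^rb_1$: they relate the known Steenrod action to the unknown Dyer-Lashof action but leave the leading coefficients undetermined, so some external input is required. Kochman's actual argument supplies that input by identifying the extended power $E\Sigma_p\times_{\Sigma_p}BU(1)^p\simeq B(\Sigma_p\wr U(1))$ and computing the composite $B(\Sigma_p\wr U(1))\to BU(p)\to BU$ in homology via characteristic classes of induced representations; this is your parenthetical ``variant'' through powers of $\CPi$, and it is not a simplification of the bookkeeping but the computational core that your sketch omits. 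Finally, in your third paragraph, a surjection of graded algebras need not carry indecomposables to indecomposables (a generator can map to a decomposable), so that step is wrong as stated; fortunately it is also unnecessary, since the indecomposables of $H_*(MU;\F_p)$ form a one-dimensional group in each even degree, so $\dlQ^rb_n\approx c_{r,n}b_{n+r(p-1)}$ holds automatically and the entire content of the theorem is the scalar $c_{r,n}$ --- which your proposal never actually computes.
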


Note that in the $p=2$ case there are analogous results
for $H_*(MO;\F_2)$.

The Dyer-Lashof operations annihilate $1$ and the Cartan
formula implies that they act on the indecomposable
quotient. In~\cite{SOK:DLops}*{theorem~10}, Kochman
determined the indecomposable generators which are not
in the image of any Dyer-Lashof operations of positive
degree. We set
\[
\mathrm{Q}_{\mathrm{DL}}H_*(MU) =
\mathrm{Q}H_*(MU)/\{\dlQ^sx:s\geq1,\;x\in\mathrm{Q}H_*(MU)\}.
\]
\begin{thm}\label{thm:DL-MU-QH}
The indecomposables $\mathrm{Q}_{\mathrm{DL}}H_*(MU)$
have the following elements as a basis:
\begin{itemize}
\item
if $p$ is odd, $b_{np^t}$ where $p\nmid n$ and
$n=(\sum_{i=0}^ks_ip^i)(p-1)+r$ with $r=1,2,\ldots,p-1$
and if $\sum_{i=0}^ks_ip^i\neq0$, $0\leq s_i\leq(p-1)$
and $1\leq s_k\leq s_{k-1}\leq\cdots\leq s_0\leq r$,
\item
if $p=2$, $b_{2^t}$ where $t\geq0$.
\end{itemize}
\end{thm}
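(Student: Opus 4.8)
The plan is to reduce the statement to a purely combinatorial assertion about binomial coefficients modulo~$p$, feeding in Kochman's formula Theorem~\ref{thm:DL-MU} as essentially the only input, and then to settle that assertion by a base-$p$ digit analysis in the spirit of Lucas's and Kummer's theorems.

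First I would pin down the action of the positive Dyer-Lashof operations on $\mathrm{Q}H_*(MU)=\F_p\{b_n:n\geq1\}$. Since $H_*(MU)$ is polynomial on the~$b_n$, which lie in positive even degrees, the Cartan formula shows that a positive operation applied to a decomposable class is again decomposable, so $\mathrm{Q}_{\mathrm{DL}}H_*(MU)$ is the cokernel of the positive operations acting on $\F_p\{b_n\}$; moreover, as $H_*(MU)$ is concentrated in even degrees, the only operations that can act nontrivially are the $\dlQ^r$ for odd~$p$ and the $\dlQ^{2r}$ for $p=2$ (the Bocksteins, and the odd-index operations when $p=2$, reverse parity and hence vanish on $H_*(MU)$). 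By Theorem~\ref{thm:DL-MU}, on $\mathrm{Q}H_*(MU)$ these operations send $b_n$ to a sign times $\binom{r-1}{n}\,b_{n+r(p-1)}$ (respectively $\binom{r-1}{n}\,b_{n+r}$ when $p=2$), using $(n,r-n-1)=\binom{r-1}{n}$; and $\binom{r-1}{n}$ is a nonzero integer exactly when $r\geq n+1$, the cases $r<n$ and $r=n$ (where $\dlQ^rb_n$ is zero or a $p$-th power) being automatically accounted for. Hence $b_N$ represents a nonzero class of $\mathrm{Q}_{\mathrm{DL}}H_*(MU)$ if and only if there is \emph{no} pair of integers $(n,r)$ with $n\geq1$, $r\geq n+1$, $N=n+r(p-1)$ (respectively $N=n+r$ when $p=2$), and $p\nmid\binom{r-1}{n}$.

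The remaining task is combinatorial, and Lucas's theorem supplies the handle: $p\nmid\binom{r-1}{n}$ precisely when each base-$p$ digit of~$n$ is $\leq$ the corresponding digit of~$r-1$. I would first peel off the factor $p^t$: if $(n,r)$ represents $N$ then $(pn,pr)$ represents $pN$, and conversely any representation of $pN$ must have $p\mid n$ and $p\mid r$, for otherwise a nonzero units digit of $n$ or of $r$ would produce a carry forcing $\binom{r-1}{n}\equiv0$. Thus $b_N$ is Dyer-Lashof indecomposable iff $b_{n_0}$ is, where $n_0$ is the prime-to-$p$ part of~$N$; this is the source of the ``$b_{np^t}$'' in the statement, and it suffices to treat $p\nmid N$. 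For $p=2$ a short carry argument then shows that an admissible $n$ exists iff $N$ is not a power of~$2$ (for odd $N\geq3$ take $n=1$, where $\binom{N-2}{1}=N-2$ is odd; the residual case $n_0=1$ is visibly not representable since $n+r\geq3$), which gives the basis $\{b_{2^t}\}$. For odd~$p$ one writes $n=(p-1)m+r_0$ with $1\leq r_0\leq p-1$, which is forced by $n\equiv N\bmod(p-1)$, so that $r-1$ becomes an explicit affine function of~$m$; running the Lucas--Kummer no-carry condition on the base-$p$ digits of~$m$ against those of $r-1$ translates the nonexistence of an admissible $(n,r)$ into exactly the monotone chain $1\leq s_k\leq s_{k-1}\leq\cdots\leq s_0\leq r_0$ on the digits $s_i$ of~$m$ appearing in the statement.

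I expect the odd-primary case of this last step to be the main obstacle. Proving both implications --- that failure of the digit chain produces an explicit admissible $(n,r)$, so that $b_N$ really is hit, and conversely that the chain holding forces every candidate coefficient to vanish mod~$p$ --- requires carefully tracking how a carry in the addition $n+r(p-1)$ propagates through the base-$p$ expansions, and it is the coupling of the digits of $n$ with those of $r-1$ through the factor $p-1$ that makes this delicate; the signs $(-1)^{r+n+1}$ are irrelevant, but the division by $p-1$ is not. The case $p=2$ and the descent-by-$p$ reduction are comparatively routine once the Lucas--Kummer framework is in place.
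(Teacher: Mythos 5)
The paper does not actually prove this theorem: it is quoted from Kochman (\cite{SOK:DLops}, theorem~10), and the paper's own supporting material only re-derives pieces of it. The \emph{in}decomposability of the listed generators is verified in Section~\ref{sec:Calculations} by a homotopy-theoretic route: the $\TAQ$-Hurewicz homomorphism $\theta'$ annihilates everything in the image of a positive Dyer-Lashof operation (Theorem~\ref{thm:deltaQ^Ix=0-gen}), yet the explicit generating-function computation shows $\theta'(b_{(m(p-1)+r)p^e})$ is a multinomial coefficient times a monomial in the $\xi_j$, nonzero exactly when the digit chain $1\leq s_d\leq\cdots\leq s_0\leq r$ holds; Proposition~\ref{prop:DL-decomp} then gives only a partial check of decomposability of the remaining $b_n$. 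Your route is genuinely different — purely combinatorial, using only Theorem~\ref{thm:DL-MU} plus Lucas — and, if completed, would prove \emph{both} halves, which the paper does not attempt. Your reduction is sound: the Cartan-formula and parity arguments correctly identify $\mathrm{Q}_{\mathrm{DL}}H_*(MU)$ with the cokernel of the maps $b_n\mapsto\pm\binom{r-1}{n}b_{n+r(p-1)}$ on $\F_p\{b_n\}$, the identification $(n,r-n-1)=\binom{r-1}{n}$ and the exclusion of the cases $r\leq n$ are right, the descent from $N$ to its prime-to-$p$ part via Lucas is correct, and the $p=2$ case is finished.

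The gap is exactly where you flag it, and unfortunately it is the entire content of the theorem for odd $p$: the asserted equivalence between ``there is no pair $(n,r)$ with $n\geq1$, $N=n+r(p-1)$ and $p\nmid\binom{r-1}{n}$'' and the digit chain $1\leq s_k\leq\cdots\leq s_0\leq r_0$ on the base-$p$ digits of $m=(N-r_0)/(p-1)$ is never proved. This is not a one-line Lucas computation. The direction ``chain fails $\Rightarrow$ $b_N$ is hit'' requires exhibiting one admissible $(n,r)$, which is manageable; but the direction ``chain holds $\Rightarrow$ $b_N$ is \emph{not} hit'' is a statement universally quantified over every $r$ in the range $\lceil N/p\rceil<r\leq\lfloor(N-1)/(p-1)\rfloor$, with $n=N-r(p-1)$ coupled to $r$, and you must show the Lucas condition on $\binom{r-1}{n}$ fails for all of them simultaneously. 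Carries in $n=N-r(p-1)=N-rp+r$ propagate in both directions as $r$ varies, and the relation between the digits of $m$ and those of the admissible $r-1$ is not digitwise. Until both implications are written out, what you have is a correct strategy plus a verified $p=2$ case, not a proof. (If you only need the indecomposability half, note that the paper's $\theta'$ argument gets it from a single nonvanishing multinomial coefficient $\binom{r_0}{r_0-s_0,\,s_0-s_1,\ldots,s_d}$ with $r_0<p$, which is far less delicate than the universally quantified binomial statement your route demands.)
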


The Dyer-Lashof indecomposability of the stated generators
can be deduced from our results on the $\TAQ$-Hurewicz
homomorphism. As an exercise in computing with binomial
coefficients modulo a prime, we have
\begin{prop}\label{prop:DL-decomp}
Suppose that $p$ is a prime and $n$ has $p$-adic expansion
\[
n = n_sp^s+\cdots n_{s+t}p^{s+t}
\]
where $n_s\neq0\neq n_{s+t}$ and $t>0$. \\
If $p$ is odd then
\[
\dlQ^{n-n_sp^s} b_{n_sp^s} \approx \pm\binom{n-n_sp^s-1}{n_sp^s} b_n
                        \not\approx 0.
\]
If $p=2$ then
\[
\dlQ^{2n-2^{s+1}} b_{2^s} \approx \binom{n-2^s-1}{2^s} b_n
                        \not\approx 0.
\]
\end{prop}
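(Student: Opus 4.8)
The plan is to derive both congruences from Kochman's formula (Theorem~\ref{thm:DL-MU}), after which the only remaining content is the assertion $\not\approx0$, i.e.\ that a certain binomial coefficient is a unit modulo~$p$; the latter I would settle by a short base-$p$ digit count using the Lucas-type congruence~\eqref{eq:Binom-modp}.

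For the first step, note first that the hypothesis $t>0$ guarantees the operation indices in the statement are positive: $n-n_sp^s\geq p^{s+1}>0$ for odd $p$, and $2n-2^{s+1}>0$ for $p=2$ since then $n\geq 2^s+2^{s+1}$. Applying Theorem~\ref{thm:DL-MU} to $b_{n_sp^s}$ with the operation $\dlQ^{n-n_sp^s}$ in the odd case reads off the binomial coefficient $(n_sp^s,\,(n-n_sp^s)-n_sp^s-1)=\binom{n-n_sp^s-1}{n_sp^s}$ and the sign $(-1)^{(n-n_sp^s)+n_sp^s+1}=(-1)^{n+1}$, which is the $\pm$ of the statement; applying it to $b_{2^s}$ with $\dlQ^{2n-2^{s+1}}=\dlQ^{2(n-2^s)}$ in the case $p=2$ reads off $(2^s,(n-2^s)-2^s-1)=\binom{n-2^s-1}{2^s}$, with target subscript $2^s+(n-2^s)=n$. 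Thus everything reduces to showing these binomial coefficients are nonzero modulo~$p$.

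For the non-vanishing, write $n=n_sp^s+n_{s+1}p^{s+1}+\cdots+n_{s+t}p^{s+t}$ with $0\leq n_i\leq p-1$, $1\leq n_s\leq p-1$, $n_{s+t}\neq0$, $t\geq1$. The key observation is that $N:=n-n_sp^s$ is divisible by $p^{s+1}$, say $N=p^{s+1}M$ with $M\geq1$, so that
\[
N-1=p^{s+1}(M-1)+(p^{s+1}-1)=p^{s+1}(M-1)+(p-1)(1+p+\cdots+p^s);
\]
hence the base-$p$ digits of $n-n_sp^s-1$ in positions $0,1,\dots,s$ are all equal to $p-1$, while those in positions $>s$ are the digits of $M-1$. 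Since $n_sp^s$ has a single nonzero base-$p$ digit, namely $n_s$ in position $s$, the congruence~\eqref{eq:Binom-modp} collapses $\binom{n-n_sp^s-1}{n_sp^s}$ modulo~$p$ to the one surviving factor $\binom{p-1}{n_s}$, every other digit-factor being $\binom{p-1}{0}=1$ or $\binom{\ast}{0}=1$. As $\binom{p-1}{n_s}\equiv(-1)^{n_s}\pmod p$ this is a unit (for $p=2$ one has $n_s=1$ and it reads $\binom{1}{1}=1$), so $\dlQ^{(\cdots)}b_{(\cdots)}\approx(\text{unit})\,b_{(\cdots)}\not\approx0$.

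The only obstacle is this combinatorial bookkeeping, and it is a mild one: the point that makes it go through cleanly is that deleting the lowest digit $n_sp^s$ from $n$ leaves a multiple of $p^{s+1}$, so subtracting $1$ borrows all the way down and fills the bottom $s+1$ base-$p$ digits with $p-1$, which is exactly what pins the surviving Lucas factor to $\binom{p-1}{n_s}$ and hence keeps it nonzero.
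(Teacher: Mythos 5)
Your argument is correct and is essentially the paper's: the paper likewise reduces to the Lucas-type congruence~\eqref{eq:Binom-modp}, writing $n-n_sp^s-1=(n-n_sp^s-p^k)+(p^k-1)$ with $p^k$ the exact power of $p$ dividing $n-n_sp^s$, so that the bottom $k>s$ base-$p$ digits are all $p-1$ and the single surviving factor is $\binom{p-1}{n_s}\not\equiv0$ --- your use of $p^{s+1}\mid(n-n_sp^s)$ in place of $p^k$ is an immaterial variation. The only difference is that you also spell out the reduction to Kochman's formula (Theorem~\ref{thm:DL-MU}), which the paper treats as read; note in passing that for odd $p$ that formula sends $b_{n_sp^s}$ to a multiple of $b_{n_sp^s+(n-n_sp^s)(p-1)}$, so the subscript $n$ in the displayed statement only matches for $p=2$ --- a defect of the statement itself rather than of your proof, since the mathematical content in both your argument and the paper's is the nonvanishing of the binomial coefficient.
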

\begin{proof}
In each case working modulo~$p$ we have
\begin{align*}
\binom{n-n_sp^s-1}{n_sp^s} &\equiv \binom{(n-n_sp^s-p^k) + (p^k-1)}{n_sp^s} \\
                    &\equiv \binom{n-n_sp^s-p^k}{0}\binom{p^k-1}{n_sp^s} \\
                    &\not\equiv 0,
\end{align*}
where $p^k$ is the highest power of $p$ dividing $(n-n_sp^s)$,
and we use the fact that
\[
p^k-1 = (p-1)p^{k-1} +\cdots+(p-1)p^s+\cdots(p-1)p+(p-1)
\]
with $n_s\leq p-1$.
\end{proof}

\begin{bibdiv}
\begin{biblist}

\bib{JFA:Blue}{book}{
   author={Adams, J. F.},
   title={Stable Homotopy and Generalised Homology},
   series={Chicago Lectures in Math.},
   note={Reprint of the 1974 original},
   publisher={University of Chicago Press},
   date={1995},
}

\bib{AB:HomGenBSO+BSU}{article}{
   author={Baker, A.},
   title={More homology generators for $B\mathrm{SO}$
   and $B\mathrm{SU}$},
   journal={CMS Conf. Proc.},
   volume={2},
   date={1982},
   pages={429\ndash435},
}

\bib{AB:HWdecomp}{article}{
   author={Baker, A.},
   title={Husemoller-Witt decompositions and
   actions of the Steenrod algebra},
   journal={Proc. Edinburgh Math. Soc. (2)},
   volume={28},
   date={1985},
   number={2},
   pages={271\ndash288},
}

\bib{BP-Einfinity}{article}{
    author={Baker, A.},
    title={$BP$: Close encounters of the $E_\infty$
    kind},
    date={2014},
    journal={J. Homotopy and Rel. Struct.},
    volume={92},
    pages={257\ndash282},
}

\bib{Nishida}{article}{
    author={Baker, A.},
    title={Power operations and coactions in
    highly commutative homology theories},
   journal={Publ. Res. Inst. Math. Sci. of Kyoto
   University},
   volume={51},
   date={2015},
   pages={237\ndash272},
}

\bib{BGRtaq}{article}{
   author={Baker, A.},
   author={Gilmour, H.},
   author={Reinhard, P.},
   title={Topological Andr\'e-Quillen homology
   for cellular commutative $S$-algebras},
   journal={Abh. Math. Semin. Univ. Hamburg},
   volume={78},
   date={2008},
   number={1},
   pages={27\ndash50},
}

\bib{AJB-JPM}{article}{
    author={Baker, A. J.},
    author={May, J. P.},
    title={Minimal atomic complexes},
   journal={Topology},
    volume={43},
      date={2004},
    number={2},
     pages={645\ndash665},
}

\bib{AB-BR:Mxi}{article}{
    author={Baker, A.},
    author={Richter, B.},
    title={Some properties of the Thom spectrum 
    over loop suspension of complex projective 
    space},
    journal={Contemp. Math.},
    volume={617},
    date={2014},
    pages={1\ndash12}
}

\bib{MBtaq}{article}{
    author={Basterra, M.},
     title={Andr\'e-Quillen cohomology of commutative
     $S$-algebras},
   journal={J. Pure Appl. Algebra},
    volume={144},
      date={1999},
    number={2},
     pages={111\ndash143},
}

\bib{MB-MM:TAQ}{article}{
    author={Basterra, M.},
    author={Mandell, M.},
    title={Homology and cohomology of $E_\infty$
    ring spectra},
    journal={Math. Z.},
    volume={249},
    date={2005},
    number={4},
    pages={903\ndash944},
}

\bib{MB-MM:BP-E4}{article}{
     author={Basterra, M.},
     author={Mandell, M.},
     title={The multiplication on $BP$},
      journal= {J. Topology},
      volume={6},
      date={2013},
      pages={285\ndash 310},
}

\bib{JMB:GEM}{article}{
   author={Boardman, J. M.},
   title={Graded Eilenberg-Mac~Lane ring spectra},
   journal={Amer. J. Math.},
   volume={102},
   date={1980},
   number={5},
   pages={979\ndash1010},
}
		
\bib{LNM1176}{book}{
   author={Bruner, R. R.},
   author={May, J. P.},
   author={McClure, J. E.},
   author={Steinberger, M.},
   title={$H_\infty $ ring spectra and their
   applications},
   series={Lect. Notes in Math.},
   volume={1176},
   date={1986},
}

\bib{EKMM}{book}{
    author={Elmendorf, A. D.},
    author={\Kriz, I.},
    author={Mandell, M. A.},
    author={May, J. P.},
    title={Rings, modules, and algebras in
    stable homotopy theory},
    journal={Math. Surv. and Monographs},
    volume={47},
    note={With an appendix by M. Cole},
    date={1997},
}

\bib{HG:PhD}{thesis}{
    author={Gilmour, H.},
     title={Nuclear and minimal atomic $S$-algebras},
     institution={PhD thesis, University of Glasgow},
     date={2006},
}

\bib{Hu-Kriz-May}{article}{
    author={Hu, P.},
    author={\Kriz, I.},
    author={May, J. P.},
    title={Cores of spaces, spectra and $E_\infty$
    ring spectra},
    journal={Homol., Homot. and Appl.},
    volume={3},
    date={2001},
    number={2},
     pages={341\ndash54},
}

\bib{SOK:DLops}{article}{
   author={Kochman, S. O.},
   title={Homology of the classical groups
   over the Dyer-Lashof algebra},
   journal={Trans. Amer. Math. Soc.},
   volume={185},
   date={1973},
   pages={83\ndash136},
}

\bib{IK:BP}{misc}{
    author={\Kriz, I.},
    title={Towers of $E_\infty$ ring spectra
    with an application to $BP$},
    series={unpublished preprint},
}

\bib{IK&JPM:Asterisque}{article}{
   author={\Kriz, I.},
   author={May, J. P.},
   title={Operads, Algebras, Modules and Motives},
   journal={Ast\'erisque},
   number={233},
   date={1995},
}

\bib{NJK:Transfers}{article}{
    author={Kuhn, N. J.},
     title={The transfer and James-Hopf invariants},
   journal={Math. Z.},
    volume={196},
      date={1987},
    number={2},
     pages={391\ndash405},
}

\bib{NJK:LocAQG}{article}{
    author={Kuhn, N. J.},
     title={Localization of Andr\'e-Quillen-Goodwillie
     towers, and the periodic homology of infinite
     loopspaces},
   journal={Adv. Math.},
    volume={201},
    date={2006},
    number={2},
    pages={318\ndash378},
}

\bib{NJK&JBMcC:HomLoopSpces}{article}{
    author={Kuhn, N. J.},
    author={McCarty, J. B.},
     title={The $\bmod\;2$ homology of infinite
     loopspaces},
   journal={Alg. \& Geom. Top.},
    volume={13},
      date={2013},
     pages={687\ndash745},
}

\bib{AL:Glasgow}{article}{
   author={Lazarev, A.},
   title={Cohomology theories for highly structured
   ring spectra},
   journal={Lond. Math. Soc. Lect. Note Ser.},
   volume={315},
   date={2004},
   pages={201\ndash231},
}

\bib{LNM1213}{book}{
   author={Lewis, L. G., Jr.},
   author={May, J. P.},
   author={Steinberger, M.},
   author={McClure, J. E.},
   title={Equivariant stable homotopy theory},
   series={Lect. Notes in Math.},
   volume={1213},
   note={With contributions by J. E. McClure},
   date={1986},
}	

\bib{MM:TAQ}{article}{
     author={Mandell, M.},
     title={Topological Andr\'e-Quillen cohomology
     and $E_\infty$ Andr\'e-Quillen cohomology},
      journal= {Adv. in Math.},
      volume={177},
      date={2003},
      pages={227\ndash 279},
}

\bib{JPM:Steenrodops}{article}{
   author={May, J. P.},
   title={A general algebraic approach to Steenrod
   operations},
   series={Lect. Notes in Math.},
   volume={168},
   date={1970},
   pages={153\ndash231},
}

\bib{JPM:HomOps}{article}{
   author={May, J. P.},
   title={Homology operations on infinite loop spaces},
   journal={Proc. Sympos. Pure Math.},
   volume={XXII},
   date={1971},
   pages={171\ndash185},
}
		
\bib{JPM:Silverbook}{book}{
   author={May, J. P.},
   title={Equivariant homotopy and cohomology theory},
   series={CBMS Reg. Conf. Ser. in Math.},
   volume={91},
   note={With contributions by M. Cole, G. Comeza\~{n}a,
   S. Costenoble, A. D. Elmendorf, J. P. C. Greenlees,
   L. G. Lewis, Jr., R. J. Piacenza, G. Triantafillou
   and S. Waner},
   date={1996},
}

\bib{RM&VM}{article}{
   author={McCarthy, R.},
   author={Minasian, V.},
   title={HKR theorem for smooth $S$-algebras},
   journal={J. Pure Appl. Algebra},
   volume={185},
   date={2003},
   pages={239\ndash258},
}

\bib{HRM:SS}{article}{
   author={Miller, H. R.},
   title={A spectral sequence for the homology
   of an infinite delooping},
   journal={Pacific J. Math.},
   volume={79},
   date={1978},
   number={1},
   pages={139\ndash155},
}

\bib{DJP:MSU}{article}{
   author={Pengelley, D. J.},
   title={The homotopy type of $M\mathrm{SU}$},
   journal={Amer. J. Math.},
   volume={104},
   date={1982},
   number={5},
   pages={1101\ndash1123},
}
		
\end{biblist}
\end{bibdiv}

\end{document}